\newtheorem{theorem}{Theorem}[section]
\newtheorem{lemma}[theorem]{Lemma}
\newtheorem{proposition}[theorem]{Proposition}
\theoremstyle{definition}
\newtheorem{definition}[theorem]{Definition}
\newtheorem{example}[theorem]{Example}
\theoremstyle{remark}
\numberwithin{equation}{section}
\let\smash=\wedge
\let\iso=\cong
\let\tensor=\otimes
\let\directsum=\oplus
\newcommand{\unit}{\mathbf{1}}
\newcommand{\SH}{\mathbf{SH}}
\newcommand{\unitkq}{\mathrm{u}}
\newcommand{\GW}{\mathbf{GW}}
\newcommand{\I}{\mathbf{I}}
\newcommand{\bideg}{(\star)}
\newcommand{\hyper}{\mathsf{h}}
\newcommand{\moore}[1]{\mathsf{C}_{#1}}
\newcommand{\Hom}{{\operatorname{Hom}}}
\newcommand{\Spec}{{\operatorname{Spec}}}
\newcommand{\KQ}{\mathsf{KQ}}
\newcommand{\KW}{\mathsf{KW}}
\newcommand{\ckw}{\mathsf{cKW}}
\newcommand{\KGL}{\mathsf{KGL}}
\newcommand{\kq}{\mathsf{kq}}
\newcommand{\kw}{\mathsf{kw}}
\newcommand{\C}{\mathsf{C}}
\newcommand{\D}{\mathsf{D}}
\newcommand{\E}{\mathsf{E}}
\newcommand{\F}{\mathsf{F}}
\newcommand{\GG}{\mathsf{G}}
\newcommand{\dd}{\mathsf{d}}
\newcommand{\s}{\mathsf{s}}
\newcommand{\f}{\mathsf{f}}
\newcommand{\Mod}{\mathrm{mod}\,}
\newcommand{\const}{\mathrm{const}}
\newcommand{\Mil}{\mathsf{M}}
\newcommand{\KMil}{\mathbf{K}^\mathsf{M}}
\newcommand{\kmil}{\mathbf{k}^\mathsf{M}}
\newcommand{\KMW}{\mathbf{K}^\mathsf{MW}}
\newcommand{\sphere}{\mathbb{S}}
\newcommand{\NN}{\mathbb{N}}
\newcommand{\ZZ}{\mathbb{Z}}
\newcommand{\FF}{\mathbb{F}}
\newcommand{\pr}{{\mathrm{pr}}}
\newcommand{\id}{{\mathrm{id}}}
\newcommand{\inc}{{\mathrm{inc}}}
\newcommand{\A}{\mathbf{A}}
\newcommand{\MZ}{\mathbf{M}\mathbb{Z}}
\newcommand{\RR}{\mathbb{R}}
\newcommand{\SL}{\mathbf{SL}}
\newcommand{\Q}{\mathsf{Q}}
\newcommand{\QQ}{\mathbb{Q}}
\newcommand{\Sq}{\mathsf{Sq}}
\newcommand{\Top}{\mathsf{top}}
\newcommand{\Sm}{\mathbf{Sm}}
\begin{document}



\title{Remarks on motivic Moore spectra}
\author{Oliver R\"ondigs}
\address{Institut f\"ur Mathematik, Universit\"at Osnabr\"uck, Germany}
\email{oliver.roendigs@uni-osnabrueck.de}
\thanks{This work was supported through DFG grants within the SPP 1786 ``Homotopy theory and algebraic geometry'', and a guest professorship at the University of Oslo.}


\date{\today}
\begin{abstract}
  The term ``motivic Moore spectrum'' refers to a cone of
  an element $\alpha\colon \Sigma^{s,w}\unit\to \unit$ in the
  motivic stable homotopy groups of spheres. Homotopy groups,
  multiplicative structures, and Voevodsky's slice spectral
  sequence are discussed for motivic Moore spectra.
\end{abstract}

\maketitle

\section{Introduction}
\label{section:introduction}

Let $R$ be a ring and $a\in R$ an element, generating a two-sided
ideal $(a)\subset R$. The projection
onto the quotient $R\to R/a:=R/(a)$ is then a ring homomorphism.
If $R$ is commutative, then so is $R/a$.
In homotopy theory, the situation is more subtle. The 
topological sphere spectrum $\sphere$ is the unit in a closed symmetric
monoidal category modeling the stable homotopy category, and in particular
a commutative monoid. Given any 
endomorphism $a\colon \sphere\to \sphere$, the homotopy-theoretic quotient
is almost never a commutative monoid. The first instance occurs when $a=2$:
The homotopy-theoretic quotient $\sphere/2$, also known as the Moore spectrum
for the group $\ZZ/2$, satisfies $\pi_2\sphere/2\iso \ZZ/4$ by 
\cite{barratt.track2}, and in particular admits no unital multiplicative
structure.
Even if a unital multiplicative structure exists on $\sphere/n$ for $n>2$, 
its associativity or commutativity in the stable homotopy category is not
automatic. See \cite{oka.mult-moore} and the references therein for details.

Within motivic or $\mathbf{A}^1$-homotopy theory, Moore spectra have appeared
for example in \cite{roendigs-oestvaer.rigidity}, \cite{gheorghe}, and
\cite{mantovani.thesis}.
As the structure of the endomorphisms of the motivic sphere spectrum
is much richer, also the notion of Moore spectra should be more
sophisticated. The degree zero part of these endomorphisms over a field
is the Milnor-Witt $K$-theory graded by weight \cite{morelmotivicpi0},
and the weight zero part of that is the Grothendieck-Witt ring of the
field. Since neither is a principal ideal domain in general, and
usually far from Noetherian, Moore
spectra with respect to ideals instead of single elements are
more sensible. See for example \cite[Remark 1.4]{mantovani.thesis}. 
Nevertheless, an elementary approach is chosen here, which still
suffices to illustrate a few interesting phenomena. 
More precisely, multiplicative structures on motivic Moore spectra
-- whose existence may depend on the ground field -- are discussed
in Section~\ref{sec:multiplications}, based to some extent on
results concerning Toda brackets listed in Section~\ref{sec:toda-brackets}.
These in turn rely on some preliminaries on the few first stable
stems of the motivic sphere spectrum, to be discussed in Section~\ref{sec:preliminaries-pi_1}, which
follows and partly expands~\cite{rso.oneline}. The article closes
with some results on slices and slice differentials for special
motivic Moore spectra in Section~\ref{sec:slices-moore-spectra}.
These results may be used for slice spectral sequence
computations of homotopy groups
of motivic Moore spectra. A noteworthy feature in 
comparison with corresponding slice spectral
sequence computations for the motivic sphere spectrum is the absence of 
motivic cohomology groups with integral coefficients;
motivic cohomology groups with finite coefficients are understood much better.

\section{Preliminaries on $\pi_0$, $\pi_1$, and $\pi_2$}
\label{sec:preliminaries-pi_1}

Determining the existence of multiplications or pairings on Moore
spectra requires information about stable homotopy groups of motivic
spheres. Let $\pi_{s,w} \E$ denote the abelian group 
$[\Sigma^{s,w}\unit,\E]$, where $\E$ is a motivic spectrum and
$\unit$ is the motivic sphere spectrum. Set $\pi_{s+(w)}\E:=\pi_{s+w,w}\E$, 
and let
\[ \pi_{s+\bideg}\E = \bigoplus_{w\in \ZZ} \pi_{s+w,w}\E \]
denote the direct sum, considered as a $\ZZ$-graded module over the
$\ZZ$-graded ring $\pi_{0+\bideg}\unit$. The notation
$\pi_{s-\bideg}\E:=\pi_{s+(-\star)}\E$ will be used frequently.
The strictly 
$\A^1$-invariant sheaf obtained as the associated Nisnevich sheaf
of $U\mapsto \pi_{s,w} \E_U$ for $U\in \Sm_F$ is denoted
$\underline{\pi}_{s,w}\E$, which gives rise to $\underline{\pi}_{s+\bideg}\E$.
See \cite{morelmotivicpi0} for the following statement.

\begin{theorem}[Morel]\label{thm:zeroline}
  Let $F$ be a field. Then $\pi_{0-\bideg}\unit$ is the Milnor-Witt
  $K$-theory of $F$.
\end{theorem}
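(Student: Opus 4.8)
The plan is to realize both sides as the global sections over $\Spec F$ of a single strictly $\A^1$-invariant Nisnevich sheaf on $\Sm_F$ and to match that sheaf with Morel's Milnor--Witt $K$-theory sheaf; equivalently, to exhibit isomorphisms $\pi_{-w,-w}\unit\iso\KMW_w(F)$ for all $w\in\ZZ$, compatible with products and with multiplication by the Hopf element. The one indispensable input is Morel's stable $\A^1$-connectivity theorem, $\underline{\pi}_{s,w}\unit=0$ for $s<0$ (over an imperfect field one needs its subsequent refinements). Granting it, the family $\bigl(\underline{\pi}_{-w,-w}\unit\bigr)_{w\in\ZZ}$ is precisely the heart of the homotopy $t$-structure on $\SH(F)$ applied to $\unit$: a homotopy module, that is, a strictly $\A^1$-invariant sheaf together with $\G$-loop isomorphisms relating consecutive weights. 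Any strictly $\A^1$-invariant sheaf is unramified, hence determined by its sections over finitely generated field extensions of $F$ together with the residue maps, so it suffices to produce an isomorphism of sheaves, and one may freely pass to function fields along the way.

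Next I would build the comparison homomorphism $\KMW_*(F)\to\pi_{0-\bideg}\unit$. Recall Morel's presentation of $\KMW_*$ by generators $[u]$, $u\in F^\times$, in degree $1$, and $\eta$ in degree $-1$, modulo $[u][1-u]=0$, $[uv]=[u]+[v]+\eta[u][v]$, $\eta[u]=[u]\eta$, and $\eta h=0$ with $h:=2+\eta[-1]$. Send $\eta$ to the stable algebraic Hopf map $\A^2\minus\{0\}\simeq S^{3,2}\to\PP^1\simeq S^{2,1}$, a class in $\pi_{1,1}\unit$, and $[u]$ to the class in $\pi_{-1,-1}\unit=\pi_{0,0}(\Sigma^\infty\G)$ carried by the $F$-point $u\in\G(F)$. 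The four relations become geometric identities in $\pi_{0-\bideg}\unit$: the Steinberg relation reflects an $\A^1$-nullhomotopy supplied by the line $t\mapsto(t,1-t)$; the multiplicativity relation and $\eta[u]=[u]\eta$ encode the group law on $\G$ and the commutativity of $\eta$ with symbols in $\pi_{0+\bideg}\unit$; and $\eta h=0$ is the motivic counterpart of $2\eta=0$, proved via a self-map of $\PP^2$ (the motivic EHP relation). Since $\pi_{0-\bideg}\unit$ is a sheaf section group, this globalizes to a map $\KMW_*\to\underline{\pi}_0(\unit)_{-\bideg}$ of homotopy modules.

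For surjectivity, the connectivity theorem reduces the claim to showing that $\pi_{-n,-n}\unit$ ($n\geq 0$) is generated over $\GW(F)=\pi_{0,0}\unit$ by the length-$n$ symbols $[u_1]\cdots[u_n]$, the negative weights then arising by $\G$-contraction. This generation statement is itself an unstable computation: once $n$ is large, $\underline{\pi}_n^{\A^1}(\A^{n+1}\minus\{0\})$ lies in the stable range and equals $\underline{\pi}_{-(n+1),-(n+1)}\unit$, and Morel's analysis of this homotopy sheaf --- through the $\G$-fibration $\A^{n+1}\minus\{0\}\to\PP^n$ and a motivic Blakers--Massey argument --- exhibits generators matching the Milnor--Witt symbols.

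Injectivity is the main obstacle, and the place where Morel's theory of strictly $\A^1$-invariant sheaves is genuinely needed. The efficient packaging is the fibre-product presentation $\KMW_n\iso\KMil_n\times_{\kmil_n}\I^n$, with $\kmil_n=\KMil_n/2\iso\I^n/\I^{n+1}$ (the Milnor conjecture); on the homotopy side the unit's maps to motivic cohomology and to Witt theory induce compatible homomorphisms $\pi_{0-\bideg}\unit\to\KMil_*(F)$ and $\pi_{0-\bideg}\unit\to\bigoplus_n\I^n(F)$ whose composites with the comparison map are the two projections, so a class in the kernel vanishes in $\KMil_*$ and in $\I^*$, hence in $\KMW_*$. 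But making this effective --- and, for that matter, the generation statement used for surjectivity --- rests on the hard core of Morel's program: that $\A^1$-localization preserves strict $\A^1$-invariance, the resulting Rost--Schmid/Gersten resolutions, and the exactness of the contraction functor $M\mapsto M_{-1}$, which together pin down the size of the unstable homotopy sheaves involved. That technical package is where the real work lies.
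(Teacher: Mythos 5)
The paper offers no proof of this statement: it is quoted as Morel's theorem with a citation to \cite{morelmotivicpi0}, so there is no internal argument to compare against, and what you have written is essentially a pr\'ecis of Morel's own proof (comparison map on the generators $\eta$ and $[u]$, verification of the $\KMW$-relations, the stable connectivity theorem, the fibre-product presentation $\KMW_n\iso\KMil_n\times_{\kmil_n}\I^n$, and the strictly $\A^1$-invariant/unramified-sheaf and contraction machinery), with the hard technical core explicitly deferred to Morel -- which is a fair account of the literature rather than an independent proof, and is consistent with how the paper uses the result. Two points in your sketch are glossed more than they should be. First, the Steinberg relation is not ``supplied by the line $t\mapsto(t,1-t)$'': that map $\A^1\minus\{0,1\}\to\G\smash\G$ is exactly the thing whose stable nullhomotopy must be proved, and this is the theorem of Hu--Kriz that Morel invokes; likewise $\eta\hyper=0$ is most cleanly seen from $\varepsilon$-commutativity ($\eta\varepsilon=\eta$, so $\eta\hyper=\eta(1-\varepsilon)=0$) rather than from a self-map of $\PP^2$. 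Second, for injectivity you need an actual homomorphism $\pi_{0-\bideg}\unit\to\bigoplus_n\I^n(F)$ lifting the map to Witt theory; this comes from the $\eta$-inverted (Witt-theoretic) side, and identifying its target and its compatibility with the mod-$2$ motivic cohomology projection uses Voevodsky's resolution of the Milnor conjecture, as you indicate -- but the existence and compatibility of that square is itself part of Morel's argument, not a formality. With those caveats acknowledged, your outline matches the cited source's approach.
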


The Milnor-Witt $K$-theory of $F$ is denoted $\KMW(F)$, or
simply $\KMW$, following the convention that the base field
or scheme may be ignored in the notation. Its generators are
denoted $\eta\in \KMW_{-1}=\pi_{1,1}\unit$ and 
$[u]\in \KMW_1(F)=\pi_{-1,-1}\unit_F$ for every unit $u\in F^\times$.
The abbreviations
\begin{align*}
  \langle u\rangle & := 1+\eta[u]\in \KMW_0(F) \\ 
  \langle u_1,\dotsc,u_m\rangle & : =\langle u_1\rangle+\dotsm +\langle u_m\rangle
  \in \KMW_0(F) \\  
  \varepsilon&:=-\langle-1\rangle \\
  \hyper& :=\langle 1,-1\rangle =1-\varepsilon
\end{align*}
for units $u,u_1,\dotsc,u_m\in F^\times$ will be convenient.
Under the
identification of $\KMW_{0}(F)$ with the Grothendieck-Witt ring $\GW(F)$
of $F$, the element $\langle u_1,\dotsc,u_m\rangle$ corresponds to the
quadratic form given by the appropriate diagonal matrix. Milnor 
$K$-theory \cite{milnor.k-quadratic} is expressed
as the quotient $\KMil_\star \iso \KMW_\star/(\eta)$. Set
$\kmil_\star:=\KMil_\star/2$.
Theorem~\ref{thm:zeroline} implies that for every motivic spectrum
$\E$ and for every integer $s$, $\pi_{s+\bideg}\E$ is a graded
$\KMW$-module. As a first instance besides the motivic sphere
spectrum $\unit$, consider 
the very effective cover $\kq\to \KQ$
of the motivic spectrum representing hermitian $K$-theory
\cite{aro.kq}, \cite{bachmann.generalized}.
Using $\kq$ instead of 
the effective cover $\f_{0}\KQ\to \KQ$ 
leads to a slight improvement on the 
computation~\cite[Theorem 5.5]{rso.oneline}.

\begin{theorem}[R\"ondigs-Spitzweck-{\O}stv{\ae}r]\label{thm:oneline}
  Let $F$ be a field of exponential characteristic $e\neq 2$. The
  unit map $\unit\to \kq$ induces an isomorphism
  $\pi_{0+\bideg}\unit \to \pi_{0+\bideg}\kq$, and a surjection
  $\pi_{1+\bideg}\unit\to \pi_{1+\bideg}\kq$ whose kernel coincides
  with $\KMil_{2-\star}/24$ after inverting $e$. 
\end{theorem}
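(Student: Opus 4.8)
The plan is to derive the statement from its counterpart for the effective cover $\f_0\KQ$, namely \cite[Theorem~5.5]{rso.oneline}: over a field of exponential characteristic $e\neq 2$, the unit map $\unit\to \f_0\KQ$ induces an isomorphism $\pi_{0+\bideg}\unit\to \pi_{0+\bideg}\f_0\KQ$ and a surjection $\pi_{1+\bideg}\unit\to \pi_{1+\bideg}\f_0\KQ$ whose kernel is $\KMil_{2-\star}/24$ after inverting $e$. Since $\kq$ is very effective, hence effective, the universal property of the effective cover produces a canonical map $c\colon \kq\to \f_0\KQ$ through which the unit map $\unit\to\f_0\KQ$ factors as $\unit\to\kq\xrightarrow{c}\f_0\KQ$. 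It therefore suffices to prove that $c$ induces an isomorphism on $\pi_{0+\bideg}$ and on $\pi_{1+\bideg}$; the assertions about $\unit\to\kq$ then follow from \cite[Theorem~5.5]{rso.oneline} by a short diagram chase, the kernel and the surjectivity being inherited along the isomorphism $c_*$.

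To control $c$, I would invoke Bachmann's analysis of the very effective cover of Hermitian $K$-theory \cite{bachmann.generalized}: the very effective cover $\kq=\tilde{\f}_0\KQ$ is the connective cover of $\f_0\KQ$ for (the effective version of) Morel's homotopy $t$-structure, so the fiber $\F$ of $c\colon \kq\to \f_0\KQ$ is strictly coconnective, i.e.\ $\underline{\pi}_{n+\bideg}\F=0$ for all $n\geq 0$. For every pair $(s,w)$ with $s\geq w$ the shifted sphere $\Sigma^{s,w}\unit$ lies in the connective part of this $t$-structure (apply the simplicial suspension $s-w\geq 0$ times, using $t$-exactness of $\G$-suspension), whence $\pi_{s,w}\F=[\Sigma^{s,w}\unit,\F]=0$; equivalently $\pi_{n+\bideg}\F=0$ for every $n\geq 0$. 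The long exact homotopy sequence of $\F\to\kq\xrightarrow{c}\f_0\KQ$ then shows that $c_*$ is an isomorphism on $\pi_{n+\bideg}$ for $n\geq 1$ and a monomorphism on $\pi_{0+\bideg}$. Since the composite $\pi_{0+\bideg}\unit\to\pi_{0+\bideg}\kq\xrightarrow{c_*}\pi_{0+\bideg}\f_0\KQ$ is an isomorphism by \cite[Theorem~5.5]{rso.oneline} and $c_*$ is injective there, $c_*$ is bijective on $\pi_{0+\bideg}$, hence so is $\unit\to\kq$; and on $\pi_{1+\bideg}$ the isomorphism $c_*$ transports the surjectivity and the kernel description from \cite[Theorem~5.5]{rso.oneline}.

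The genuinely hard input is, of course, the cited computation of $\pi_{1+\bideg}\unit$ in \cite{rso.oneline}, which runs through Voevodsky's slice spectral sequence: one identifies the first few slices of $\unit$ and of $\KQ$ with motivic Eilenberg--MacLane spectra of $\ZZ$ and $\ZZ/2$, determines the relevant first slice differential (the origin of the divisibility by $24$), and translates this into the induced map on homotopy. Within the present argument the only delicate point is the passage from $\f_0\KQ$ to $\kq$, which rests on the fact that these two effective models of Hermitian $K$-theory differ only by a coconnective spectrum; I would expect to spend most of the effort pinning this down precisely, with reference to \cite{bachmann.generalized} (and \cite{aro.kq}). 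An alternative that bypasses \cite[Theorem~5.5]{rso.oneline} altogether is to rerun the slice spectral sequence computation of \cite{rso.oneline} with $\kq$ in place of $\f_0\KQ$ from the outset, using Bachmann's generalized slices of $\KQ$; this is conceptually cleaner but essentially re-proves the main theorem of \cite{rso.oneline}.
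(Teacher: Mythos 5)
Your reduction to \cite[Theorem~5.5]{rso.oneline} stands or falls with the claim that the fiber of $c\colon \kq\to\f_0\KQ$ has vanishing $\pi_{n+\bideg}$ for $n\geq 0$ \emph{in all weights}, and that claim is not justified. Bachmann's identification exhibits $\kq$ as the connective cover of $\f_0\KQ$ for the homotopy $t$-structure on the effective subcategory $\SH(F)^{\eff}$, whose connective part is generated by very effective objects; coconnectivity there only yields the vanishing of $[\Sigma^{s,w}\unit,-]$ for $w\geq 0$ (and $s\geq w$). For $w<0$ the sphere $\Sigma^{s,w}\unit$ is not even effective, so your parenthetical justification via $t$-exactness of $\G$-suspension cannot reach it: one would need $\G$-\emph{de}suspensions, which leave the effective category. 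The inference ``coconnective for this $t$-structure $\Rightarrow$ bigraded homotopy vanishes in non-negative simplicial degrees'' is in fact false: $\Sigma^{-1,0}\MZ$ is effective and lies in the coconnective part (motivic cohomology of smooth schemes vanishes in negative weights, and in weight zero in negative degrees), yet $\pi_{1+(-3)}\Sigma^{-1,0}\MZ=H^{1,3}(F;\ZZ)$ is nonzero for $F=\QQ$ by Borel's computation of $K_5(\QQ)\otimes\QQ$. Since the grading $\star$ in Theorem~\ref{thm:oneline} runs over all of $\ZZ$, the negative-weight part of the statement is exactly what your argument does not reach; for $w\geq 0$ the comparison is fine but also immediate, because $\pi_{0+(w)}$ and $\pi_{1+(w)}$ of $\kq$, $\f_0\KQ$ and $\KQ$ all agree there by the universal properties of the covers.

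This is not a removable technicality: the difference between $\kq$ and $\f_0\KQ$ lives precisely in negative weights --- their slices already differ by infinitely many summands of negative simplicial suspension (compare \cite[Theorem 3.2]{aro.kq} with the slices of $\KQ$ in \cite{roendigs-oestvaer.hermitian}), and these summands contribute to $\pi_{0+\bideg}$ and $\pi_{1+\bideg}$ in negative weights. That is why the present paper states the $\kq$-version as a (slight) \emph{improvement} of \cite[Theorem~5.5]{rso.oneline} rather than a formal consequence of it, gives no proof, and immediately afterwards leans on the slice analysis of $\kq$ from \cite{aro.kq} (Lemma~\ref{lem:pi1f1kq}). In other words, what you relegate to the final sentence as an optional alternative --- rerunning the relevant part of the slice spectral sequence argument of \cite{rso.oneline} with the slices (or Bachmann's generalized slices) of $\kq$ as input --- is not a detour but the argument actually needed in negative weights.
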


Voevodsky's slice filtration $\{\f_{q}\E\to\E\}_{q\in \ZZ}$
allows to be more precise, and in particular
to describe the $\KMW$-module structure.
Let $\MZ$ be Voevdosky's integral motivic Eilenberg-MacLane spectrum
representing motivic cohomology, and let $\MZ/2$ be the
version with coefficients in $\ZZ/2$. Moreover, set for $k$ a natural number 
$H^{\star-k,\star}:=\pi_{k-\bideg}\MZ$ and $h^{\star-k,\star}:=\pi_{k-\bideg}\MZ/2$.
Note the $\KMW$-module isomorphism $h^{\star-k,\star}\iso \kmil_{\star-k}$
given by multiplication with $\tau^k$, where 
\[ \tau=-1\in h^{0,1} = \{1,-1\} = \ker \bigl(H^{1,1} \xrightarrow{x\mapsto x^2} H^{1,1}\bigr)\] 
is the unique nontrivial element. The $\KMW$-module $\pi_{1-\bideg}\kq$ is
an extension of the $\KMW$-module
$H^{\star-1,\star}=\pi_{1-\bideg}\s_0\kq$ (on which $\eta$ operates trivially)
and the $\KMW$-module given by the image of 
$\pi_{1-\bideg}\f_1\kq$ in $\pi_{1-\bideg}\kq$. 


\begin{lemma}\label{lem:pi1f1kq}
  The $\KMW$-module $\pi_{1-\bideg}\f_1\kq$ is 
  generated by the image of $\eta_\Top$ under the 
  unit map $\unitkq\colon \unit\to\kq$, and has the presentation
  \[ \KMW/(2,\eta^2)\iso \pi_{1-\bideg}\f_1\kq.\]
\end{lemma}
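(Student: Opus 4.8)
The plan is to read off $\pi_{1-\bideg}\f_1\kq$ from the slice tower of $\kq$, using Bachmann's low slices of $\kq$ (in particular $\s_0\kq\simeq\MZ$, see~\cite{bachmann.generalized}), Morel's Theorem~\ref{thm:zeroline}, Theorem~\ref{thm:oneline}, and the determination of $\pi_{1-\bideg}\unit$ from~\cite{rso.oneline} that the present article follows.

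For the abstract module I would apply $\pi_{*-\bideg}$ to the homotopy cofiber sequence $\f_1\kq\to\kq\to\MZ$, obtaining the exact sequence of $\KMW$-modules
\[ \pi_{2-\bideg}\kq\longrightarrow H^{\star-2,\star}\stackrel{\partial}{\longrightarrow}\pi_{1-\bideg}\f_1\kq\longrightarrow\pi_{1-\bideg}\kq\longrightarrow H^{\star-1,\star}. \]
Thus $\pi_{1-\bideg}\f_1\kq$ is an extension of $\ker\bigl(\pi_{1-\bideg}\kq\to H^{\star-1,\star}\bigr)$ -- the ``$\f_1$-part'' of $\pi_{1-\bideg}\kq$ appearing in the extension displayed just before the lemma -- by $\operatorname{coker}\bigl(\pi_{2-\bideg}\kq\to H^{\star-2,\star}\bigr)$. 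By Theorem~\ref{thm:oneline}, $\pi_{1-\bideg}\kq$ is the quotient of the known $\pi_{1-\bideg}\unit$ by $\KMil_{2-\star}/24$ after inverting $e$, and $H^{\star-k,\star}=\pi_{k-\bideg}\MZ$, so both outer terms become concrete; comparing the extension weight by weight with $\KMW/(2,\eta^2)$ identifies the two. The second cofiber sequence $\f_2\kq\to\f_1\kq\to\s_1\kq$ gives an independent handle, and since $\s_1\kq$ is a module over $\s_0\unit=\MZ$ it shows right away that $2$ annihilates $\pi_{1-\bideg}\f_1\kq$.

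For the generator, observe that $\eta_\Top\in\pi_{1,0}\unit$ maps to $0$ in $\pi_{1,0}\MZ=0$ and that $\pi_{2,0}\MZ=0$, so $\unitkq_*\eta_\Top$ lifts uniquely to a class $\bar\eta\in\pi_{1,0}\f_1\kq$. Surjectivity of the $\KMW$-linear map $\KMW\to\pi_{1-\bideg}\f_1\kq$, $1\mapsto\bar\eta$, I would verify by pushing forward to $\pi_{1-\bideg}\kq$: by the known generators of $\pi_{1-\bideg}\unit$, $\unitkq_*\eta_\Top$ generates the image of $\pi_{1-\bideg}\f_1\kq$ in $\pi_{1-\bideg}\kq$ (the quotient part), and the $\KMW$-linear connecting map $\partial$ takes care of the sub. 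The relation $2\bar\eta=0$ is inherited from $\pi_{1,0}\unit$, where $\eta_\Top$ is the image of the order-two topological Hopf element; the relation $\eta^2\bar\eta=0$ is read off from the module structure of $\pi_{1-\bideg}\kq$ furnished by Theorem~\ref{thm:oneline} ($\eta^2\eta_\Top$ lies in the $\KMil_2/24$-part killed by $\unitkq$, and its lift to $\f_1\kq$ vanishes too). A count in each weight then upgrades the surjection $\KMW/(2,\eta^2)\twoheadrightarrow\pi_{1-\bideg}\f_1\kq$ to the asserted isomorphism.

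The main obstacle is the $\eta$-, and more generally the $\KMW$-, module structure. Every Voevodsky slice $\s_q\kq$ is an $\MZ$-module, so the slice spectral sequence only ever sees $\eta$ acting by zero; the genuine facts that $\eta\cdot\bar\eta\neq 0$ while $\eta^2\cdot\bar\eta=0$ are hidden extensions, and unwinding them is precisely what pulls the comparison with the sphere spectrum (Theorems~\ref{thm:zeroline},~\ref{thm:oneline} and the explicit $\pi_{1-\bideg}\unit$ of~\cite{rso.oneline}) into the argument -- the careful bookkeeping of the $24$-torsion that Theorem~\ref{thm:oneline} removes being the fiddly part. A useful alternative that avoids quoting $\pi_{1-\bideg}\unit$ wholesale is the connective Wood cofiber sequence $\Sigma^{1,1}\kq\xrightarrow{\eta}\kq\xrightarrow{\forget}\kgl$: since $\Sigma^{1,1}$ raises slice effectivity by one one has $\f_1(\Sigma^{1,1}\kq)=\Sigma^{1,1}\kq$ and $\f_1\kgl=\Sigma^{2,1}\kgl$, so applying $\f_1$ presents $\f_1\kq$ as $\Sigma^{1,1}$ of the cofiber of the hyperbolic map $\kgl\to\kq$, trading the problem for one about algebraic $K$-theory (where $\eta$ acts trivially) plus the behaviour of the hyperbolic map in degrees $0$ and $1$.
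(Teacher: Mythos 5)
Your construction of the generator and the two relations is essentially sound and matches the paper's: the lift $\bar\eta$ of $\unitkq_\ast(\eta_\Top)$ exists (and is unique) because $\pi_{1,0}\MZ=\pi_{2,0}\MZ=0$, the relation $2\bar\eta=0$ is inherited from $2\eta_\Top=0$ in $\pi_{1,0}\unit$, and $\eta^2\bar\eta=0$ follows from $\pi_{3,2}\kq=0$ once one notes that $\pi_{3,2}\f_1\kq\to\pi_{3,2}\kq$ is injective for weight reasons. The gap is in the isomorphism statement, and in fact already in surjectivity. Your exact sequence from $\f_1\kq\to\kq\to\s_0\kq=\MZ$ presents $\pi_{1-\bideg}\f_1\kq$ as an extension of $\ker\bigl(\pi_{1-\bideg}\kq\to H^{\star-1,\star}\bigr)$ by $\operatorname{coker}\bigl(\pi_{2-\bideg}\kq\to H^{\star-2,\star}\bigr)$, but neither Theorem~\ref{thm:oneline} nor \cite{rso.oneline} gives you the edge map on the two-line of $\kq$, so the sub term is not computable from your stated inputs; and ``the connecting map $\partial$ takes care of the sub'' is not an argument: $H^{\star-2,\star}$ is not a cyclic $\KMW$-module, so $\KMW$-linearity of $\partial$ alone does not place its image inside $\KMW\cdot\bar\eta$. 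The paper's discussion immediately after the lemma shows this boundary map is genuinely nontrivial in general (its projection to $\pi_{1-\bideg}\s_1\kq$ is a restriction of $\Sq^2$), so the proposed ``weight by weight comparison with $\KMW/(2,\eta^2)$'' cannot be carried out without this missing input.

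The paper avoids this by going one step further into the slice tower of $\kq$ itself: by \cite[Proposition 27]{aro.kq}, $\pi_{1-\bideg}\f_1\kq$ is an extension of $h^{\star,1+\star}=\pi_{1-\bideg}\s_1\kq$, generated by the image of $\eta_\Top$, by $h^{1+\star,2+\star}\!/\Sq^2h^{\star-1,1+\star}$, generated by the image of $\eta\eta_\Top$; comparing this with the tautological extension of $\KMW/(2,\eta)$ by $\eta\KMW/(2,\eta^2)$, where the outer maps are isomorphisms, yields both surjectivity and injectivity at once. Some such explicit control of the slice spectral sequence for $\f_1\kq$ (not merely for $\kq$) is what your argument lacks. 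Two further slips: the claim that $2$ annihilates $\pi_{1-\bideg}\f_1\kq$ ``right away'' because $\s_1\kq$ is an $\MZ$-module does not follow -- the $\MZ$-module structure gives no $2$-torsion statement; the real reason is that both slice contributions are mod-$2$ motivic cohomology together with $2\eta_\Top=0$ -- and the Wood-sequence alternative, while a reasonable idea, only relocates the problem to the behaviour of the hyperbolic map $\kgl\to\kq$ in low degrees without resolving it.
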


\begin{proof}  
  The determination of the relevant
  part of the slice spectral sequence for $\kq$ given in 
  \cite[Proposition 27]{aro.kq} implies that
  the $\KMW$-module $\pi_{1-\bideg}\f_1\kq$ is an extension of the $\KMW$-module
  $h^{\star,1+\star}=\pi_{1-\bideg}\s_1\kq$ (on which $\eta$ operates trivially)
  and the $\KMW$-module $h^{1+\star,2+\star}\!/\Sq^2h^{\star-1,1+\star}$
  (on which $\eta$ operates trivially as well).
  The $\KMil$-module $h^{\star,1+\star}=\pi_{1-\bideg}\s_1\kq$
  is generated by the image of $\eta_\Top \in \pi_{1,0}\unit$,
  and the $\KMil$-module $h^{1+\star,2+\star}\!/\Sq^2h^{\star-1,1+\star}$
  is generated by the image of $\eta\eta_\Top \in \pi_{2,1}\unit$. 
  Since $2\eta_\Top=0$, the extension describing $\pi_{1-\bideg}\f_1\kq$
  splits in every degree as a short exact sequence of abelian groups.
  In other words, the surjection $\KMW\to \pi_{1-\bideg}\kq$
  factors through a surjection $\KMW/2\to \pi_{1-\bideg}\f_1\kq$.
  The relation $0=\eta^2\unitkq(\eta_\Top)\in \pi_{3,2}\f_1\kq$
  follows from the slice spectral sequence 
  computation (which even gives $\pi_{3,2}\kq=0$).
  
  In order to show that the obtained surjection
  $\KMW/(2,\eta^2)\to \pi_{1-\bideg}\f_1\kq$ is an
  isomorphism, observe that it fits into a natural transformation
  \begin{equation}\label{eq:slice-pi1f1kq}
    \begin{tikzcd}
      \eta \KMW/(2,\eta^2) \ar[r]\ar[d] & 
      \KMW/(2,\eta^2) \ar[r] \ar[d] &
      \KMW/(2,\eta) \ar[d] \\
      h^{1+\star,2+\star}\!/\Sq^2h^{\star-1,1+\star} \ar[r] &
      \pi_{1-\bideg}\f_1\kq \ar[r] & 
      \pi_{1-\bideg}\s_1\kq 
    \end{tikzcd}
  \end{equation}
  of short exact sequences, where the outer vertical morphisms are
  isomorphisms. This implies the result. 
\end{proof}

The difference between $\pi_{1-\bideg}\f_1\kq$ and its image
$\f_1\pi_{1-\bideg}\kq$ in $\pi_{1-\bideg}\kq$ is given by the
image of 
$\pi_{2-\bideg}\s_0\kq\iso H^{\star-2,\star}\to \pi_{1-\bideg}\f_1\kq$. 
Since $2\eta_\Top=0$, this map factors over $H^{\star-2,\star}\!/2$.
The short exact sequence displayed in (\ref{eq:slice-pi1f1kq})
then induces a long exact sequence
\begin{small}
\[\Hom_{\kmil}(H^{\star-2,\star}\!/2,\kmil_{\star+1}/\rho^2)
\to \Hom_{\KMW}(H^{\star-2,\star}\!/2,\pi_{1-\bideg}\f_1\kq)
\to \Hom_{\kmil}(H^{\star-2,\star}\!/2,\kmil) \to \dotsm \]
\end{small}
in which the first group is zero for any finite field or
any number field. Here $\rho$ is the class of $-1$ in $h^{1,1}$. 
The homomorphism in question maps to
the restriction of the motivic Steenrod square $\Sq^2$ in the abelian group 
$\Hom_{\kmil}(H^{\star-2,\star}\!/2,h^{\star,\star+1}\iso\kmil_{\star})$, 
and is thereby uniquely determined for prime fields. 

Following a
specific request, a probably well-known identification,
in which $\mathcal{O}^\times$ denotes the sheaf of units, can be
derived from Lemma~\ref{lem:pi1f1kq}. In principle, any
sequence of strictly $\A^1$-invariant sheaves which
is exact when evaluated on fields is already exact, 
by a theorem of Morel. This applies in particular
to Theorem~\ref{thm:oneline}, Lemma~\ref{lem:pi1f1kq}, and
Theorem~\ref{thm:pi1f1unit}. However, the following case can be
proved directly instead.

\begin{proposition}\label{prop:pi10KQ}
  Let $F$ be a field of characteristic not two. The 
  sheaf $\underline{\pi}_{1,0}\KQ$ is isomorphic to the sheaf 
  $\mathcal{O}^\times\!/2\times \ZZ/2$.
\end{proposition}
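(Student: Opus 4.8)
The plan is to deduce this from Lemma~\ref{lem:pi1f1kq} by isolating the weight-zero part of the slice-tower analysis of $\kq$.

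First I would replace $\KQ$ by its very effective cover $\kq$: in bidegree $(1,0)$, that is in stem $1$, the cover map $\kq\to\KQ$ is an isomorphism on homotopy sheaves (see \cite{aro.kq}, \cite{bachmann.generalized}), so $\underline\pi_{1,0}\KQ\iso\underline\pi_{1,0}\kq$. The cofiber sequence $\f_1\kq\to\kq\to\s_0\kq=\MZ$ then gives an exact sequence of strictly $\A^1$-invariant sheaves
\[ \underline\pi_{2,0}\MZ\to\underline\pi_{1,0}\f_1\kq\to\underline\pi_{1,0}\kq\to\underline\pi_{1,0}\MZ, \]
whose outer terms vanish because the weight-zero motivic cohomology sheaves $\underline H^{p,0}$ vanish for $p\neq 0$. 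Hence $\underline\pi_{1,0}\kq\iso\underline\pi_{1,0}\f_1\kq$. Since the $\KMW$-generator $\unitkq(\eta_\Top)$ of $\underline\pi_{1-\bideg}\f_1\kq$ lies in weight $0$, the weight-zero part of the (sheaf-theoretic) slice-tower analysis behind Lemma~\ref{lem:pi1f1kq} identifies $\underline\pi_{1,0}\f_1\kq$ with $(\underline\KMW/(2,\eta^2))_0$, and concretely exhibits it as an extension $0\to\mathcal O^\times\!/2\to\underline\pi_{1,0}\f_1\kq\to\ZZ/2\to0$: the sub is the contribution of $\s_2\kq$, whose $\Sq^2$-indeterminacy is killed by $\underline h^{-1,1}=0$, leaving $\underline h^{1,2}\iso\mathcal O^\times\!/2$; the quotient is $\underline\pi_{1,0}\s_1\kq=\underline h^{0,1}\iso\ZZ/2$, generated by the image of $\eta_\Top$.

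It remains to split the sequence and match the summands. For the splitting, observe that the image of the topological Hopf map $\eta_\Top\in\pi_{1,0}\unit$ in $\underline\pi_{1,0}\KQ$ is $2$-torsion and maps onto a generator of the quotient $\ZZ/2=\underline\pi_{1,0}\s_1\kq$, hence defines a section $\ZZ/2\to\underline\pi_{1,0}\KQ$; therefore $\underline\pi_{1,0}\KQ\iso\mathcal O^\times\!/2\directsum\ZZ/2$, which is the asserted sheaf. (Algebraically the same comes out of the presentation: although $\eta^2$ has negative degree, the identity $\eta^2[u][v]=(\langle u\rangle-1)(\langle v\rangle-1)$ shows $\eta^2\underline\KMW_2$ is the square $\mathbf I^2$ of the fundamental ideal sheaf, so $(\underline\KMW/(2,\eta^2))_0=\underline\GW/(2,\mathbf I^2)$; splitting the rank as $\underline\GW\iso\ZZ\langle 1\rangle\directsum\mathbf I$ and using Morel's $\mathbf I/\mathbf I^2\iso\mathcal O^\times\!/2$ gives $\underline\GW/(2,\mathbf I^2)\iso\ZZ/2\directsum\mathcal O^\times\!/2$.)

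I expect the main difficulty to be bookkeeping rather than any single hard step: one has to carry out the reductions at the level of Nisnevich sheaves without appealing to Morel's rigidity for strictly $\A^1$-invariant sheaves (which is legitimate, since the slice filtration and slice spectral sequence are defined sheaf-theoretically), and — crucially — one must not overlook the degree-zero contribution $\eta^2\underline\KMW_2=\mathbf I^2$. Dropping it would produce the incorrect, strictly larger answer $\underline\GW/2$ (already visible over a $p$-adic field with $p\equiv 1\bmod 4$).
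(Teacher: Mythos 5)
Your argument is correct and is essentially the paper's own proof: reduce $\underline{\pi}_{1,0}\KQ$ to $\underline{\pi}_{1,0}\f_1\kq$ via the vanishing of weight-zero motivic cohomology in nonzero degrees, identify the resulting extension of $\underline{h}^{0,1}=\ZZ/2$ by $\underline{h}^{1,2}\iso\mathcal{O}^\times\!/2$ (the $\Sq^2$-indeterminacy being absent since $h^{-1,1}=0$, which in the paper appears as $\pi_{1,0}\f_2\kq(U)\iso\pi_{1,0}\s_2\kq(U)\iso h^{1,2}(U)$ for essentially smooth local $U$), and split it by the image of $\eta_\Top$ from the topological sphere. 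Your closing algebraic identification $(\KMW/(2,\eta^2))_0\iso\GW/(2,\mathbf{I}^2)\iso\ZZ/2\oplus\mathcal{O}^\times\!/2$ is a correct consistency check, not needed for the proof itself.
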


\begin{proof}
  By construction, the canonical maps induce 
  isomorphisms $\underline{\pi}_{1,0}\f_1\kq \iso 
  \underline{\pi}_{1,0}\kq \iso \underline{\pi}_{1,0}\KQ$.
  This uses the vanishing $\underline{\pi}_{1,0}\s_0\kq = 
  \underline{\pi}_{2,0}\s_0\kq = 0$, since $\s_0\kq =\MZ$
  and motivic cohomology of smooth schemes
  in weight zero is concentrated
  in degree zero. The homotopy cofiber sequence
  \[ \f_2 \kq \to \f_1 \kq \to \s_1 \kq \] 
  induces, for every smooth connected $F$-scheme $U$, 
  a short exact sequence
  \[ 0=\pi_{2,0}\s_1\kq(U) \to \pi_{1,0}\f_2\kq (U)
  \to \pi_{1,0}\f_1\kq(U) \to \pi_{1,0}\s_1\kq(U) =h^{0,1}\!(U) =\ZZ/2 \to 0\]
  which splits naturally via the map $\ZZ/2=\pi_{1}\sphere\to \pi_{1,0}\f_1\kq$.
  In case $U$ is (the spectrum of) a field extension, 
  this sequence appears in diagram~(\ref{eq:slice-pi1f1kq}).
  Already if $U$ is an essentially smooth local $F$-scheme, then 
  $\pi_{1,0}\f_2\kq (U) \iso \pi_{1,0}\s_2\kq (U) \iso h^{1,2}(U)$
  which supplies the result on the level of Nisnevich sheaves.
\end{proof}

\begin{theorem}\label{thm:pi1f1unit}
  Let $\nu\in \pi_{3,2}\unit$ be the second algebraic Hopf map,
  obtained by the Hopf construction on $\SL_2$, and 
  let $\eta_\Top\in \pi_{1,0}\unit$ be the first topological
  Hopf map.
  The $\KMW$-module map 
  \[ \KMW_{2+\star}\directsum \KMW \to \pi_{1-\bideg}\f_1\unit \]
  sending $(a,b)$ to $a\cdot \nu + b\cdot \eta_\Top$ induces
  an isomorphism 
  \[ \KMW_{2+\star}\{\nu\}\directsum \KMW\{\eta_\Top\}/
  (\eta\nu,2\eta_{\Top},\eta^2\eta_{\Top}-12\nu)\iso \pi_{1-\bideg}\f_1\unit\]
  after inverting the exponential characteristic.
\end{theorem}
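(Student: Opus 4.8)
The plan is to run Voevodsky's slice spectral sequence for $\f_1\unit$, exactly as in the proof of Lemma~\ref{lem:pi1f1kq} for $\f_1\kq$, using the identification of the first slices of the motivic sphere spectrum recorded in \cite{rso.oneline}: one has $\s_1\unit\iso\Sigma^{1,1}\MZ/2$, while $\s_2\unit$ and $\s_3\unit$ are built from shifted mod-$2$ and mod-$3$ Eilenberg--MacLane spectra with $d_1$-differentials governed by the motivic Steenrod squares $\Sq^2$ and $\Sq^3$. Since $\s_0\unit=\MZ$ does not contribute in stem one, the $E_1$-page computing $\pi_{1-\bideg}\f_1\unit$ receives contributions only from $\s_1\unit$, $\s_2\unit$ and $\s_3\unit$: $\s_1\unit$ yields the class $\eta_\Top$ and a copy of $\kmil\iso\KMW/(2,\eta)$ on which $\eta$ acts trivially; $\s_2\unit$ contributes $\eta\eta_\Top$ together with the odd-primary part of $\nu$; and $\s_3\unit$, together with the $\Sq^2$-differential, accounts for the $2$-primary part of the order of $\nu$. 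Passing to $E_\infty$ exhibits $\pi_{1-\bideg}\f_1\unit$, as a $\KMW$-module, as an extension
\[ 0\longrightarrow \KMW_{2+\star}/(\eta,24)\,\{\nu\}\longrightarrow \pi_{1-\bideg}\f_1\unit\longrightarrow \pi_{1-\bideg}\f_1\kq\longrightarrow 0. \]
For a field of positive exponential characteristic $e$ one first reduces to the prime field and then, after inverting $e$, to characteristic zero, by the standard base-change properties of the slices of the motivic sphere used in \cite{rso.oneline}; this is the source of the hypothesis on $e$.

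The two outer terms are then pinned down by comparison with $\kq$. Lemma~\ref{lem:pi1f1kq} identifies the quotient with $\KMW/(2,\eta^2)$ and shows that the unit map $\unitkq$ carries $\eta_\Top$ to its generator; surjectivity of $\pi_{1-\bideg}\f_1\unit\to\pi_{1-\bideg}\f_1\kq$ follows from the surjectivity statement in Theorem~\ref{thm:oneline}, since the cofiber of $\unitkq$ has trivial stem-one homotopy. Theorem~\ref{thm:oneline} likewise identifies $\ker(\pi_{1+\bideg}\unit\to\pi_{1+\bideg}\kq)$ with $\KMil_{2-\star}/24$; as $\nu$ maps to zero in $\pi_{3,2}\kq=0$ and generates the $\ZZ/24$ sitting in weight two, this kernel is the cyclic $\KMW$-module generated by $\nu$ with $\eta\nu=24\nu=0$, and it matches the left-hand term of the extension once one knows that $\pi_{1-\bideg}\f_1\unit\to\pi_{1-\bideg}\unit$ is injective -- equivalently, that the Hurewicz homomorphism $\pi_{2-\bideg}\unit\to\pi_{2-\bideg}\MZ$ is surjective after inverting $e$, which is part of the analysis in \cite{rso.oneline}.

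It remains to determine the $\KMW$-module extension, that is, which multiples of $\eta_\Top$ land in the submodule generated by $\nu$. The relation $2\eta_\Top=0$ is immediate because $\eta_\Top$ is the image of $\eta\in\pi_1\sphere$, and $\eta\nu=0$ in $\pi_{4,3}\unit$ is among the basic relations recorded in \cite{rso.oneline}; granting these, $\KMW$-linearity and the computation of the two pieces reduce the whole extension to the value of $\eta^2\eta_\Top$ in $\ZZ/24\,\{\nu\}$, together with the verification that no further hidden extension occurs in negative weights, where the symbols $[u]$ act nontrivially. The identity $\eta^2\eta_\Top=12\nu$ -- the motivic lift of $\eta^3=12\nu$ in $\pi_3\sphere\iso\ZZ/24$ -- is the crucial point; it can be obtained from the slice differentials of \cite{rso.oneline} or by complex realization, using that realization sends $\eta$ and $\eta_\Top$ to the topological Hopf map and $\nu$ to a generator of $\pi_3\sphere$, and is an isomorphism on $\pi_{3,2}\unit$ after inverting $e$. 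The relation $24\nu=0$ appearing implicitly in the statement is then a consequence of $2\eta_\Top=0$ and $\eta^2\eta_\Top=12\nu$, so the displayed presentation is precisely the one forced by the extension.

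The step I expect to be the main obstacle is this hidden multiplicative extension $\eta^2\eta_\Top=12\nu$: once the slices, their $d_1$-differentials and the comparison with $\kq$ are in place the remaining manipulations are formal, but fixing the coefficient $12$ -- and, simultaneously, ruling out further hidden extensions in the negatively graded weights -- is the step requiring genuine input beyond Lemma~\ref{lem:pi1f1kq} and Theorem~\ref{thm:oneline}.
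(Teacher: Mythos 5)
Your overall strategy is the paper's: exhibit $\pi_{1-\bideg}\f_1\unit$ as an extension of $\pi_{1-\bideg}\f_1\kq$ (identified via Lemma~\ref{lem:pi1f1kq}) by $\KMil_{2+\star}/24\{\nu\}$ (coming from Theorem~\ref{thm:oneline}), and resolve the $\KMW$-module extension by the hidden relation $\eta^2\eta_{\Top}=12\nu$, obtained by realization or the multiplicative structure of the slice spectral sequence. However, two of your intermediate steps are genuinely flawed. First, the claim that the $E_1$-page for $\pi_{1-\bideg}\f_1\unit$ ``receives contributions only from $\s_1\unit$, $\s_2\unit$ and $\s_3\unit$'' is false: in any fixed weight the summands $\Sigma^{q,q}\MZ/2$ and $\Sigma^{q+1,q}$--type summands of $\s_q\unit$ contribute nontrivial motivic cohomology groups to the one-stem for infinitely many $q$, and showing that these are killed by (higher) slice differentials is precisely the hard content of \cite{rso.oneline}. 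The paper sidesteps this by not rerunning the slice spectral sequence at all: it quotes Theorem~\ref{thm:oneline} (which packages that computation) and deduces the short exact sequence $0\to\KMil_{2+\star}/24\to\pi_{1-\bideg}\f_1\unit\to\pi_{1-\bideg}\f_1\kq\to 0$ from it together with the fact that $\unitkq$ is an isomorphism on zero slices.

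Second, your identification of the kernel term rests on the injectivity of $\pi_{1-\bideg}\f_1\unit\to\pi_{1-\bideg}\unit$, which you translate into surjectivity of the Hurewicz map $\pi_{2-\bideg}\unit\to\pi_{2-\bideg}\MZ=H^{\star-2,\star}$ and attribute to \cite{rso.oneline}. That is not established there and is false in general: the kernel of $\pi_{1-\bideg}\f_1\unit\to\pi_{1-\bideg}\unit$ is the image of the boundary $H^{\star-2,\star}\to\pi_{1-\bideg}\f_1\unit$, and since the first slice differential out of the zero slice (essentially $\Sq^2$ composed with reduction mod $2$) is nonzero on the two-line for general fields, this boundary does not vanish in sufficiently negative weights; the paper only asserts the comparison $\pi_{1+w,w}\f_1\unit\to\pi_{1+w,w}\unit$ is an isomorphism for $w>-2$ and never needs more. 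Its proof is arranged exactly to avoid your assumption: injectivity of $\KMil_{2+\star}/24\{\nu\}\to\pi_{1-\bideg}\f_1\unit$ is obtained by composing into $\pi_{1-\bideg}\unit$, where the composite is the inclusion of the kernel from Theorem~\ref{thm:oneline} (so only injectivity of that composite is used), and the theorem then follows from the map of short exact sequences (\ref{eq:slice-pi1f1unit}) and the five lemma, with the left and right outer maps being isomorphisms by direct computation and Lemma~\ref{lem:pi1f1kq}. You should replace your Hurewicz-surjectivity step by this composite-injectivity argument, and replace your ad hoc $E_1$-page description by a direct appeal to Theorem~\ref{thm:oneline}; your concluding remark about ruling out further hidden extensions in negative weights is then exactly what the five-lemma diagram delivers, rather than something to be checked separately.
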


\begin{proof}  
  Observe first that $\nu$ naturally lifts to $\pi_{3,2}\f_2\unit$, hence
  defines also an element $\nu\in \pi_{3,2}\f_1\unit$. 
  The element $\eta_{\Top}$ has image 
  $0\in \pi_{1,0}\s_0\unit=\pi_{1,0}\MZ=H^{-1,0}$, and hence
  also lifts to $\pi_{1,0}\f_1\unit$ -- even uniquely, since
  $\pi_{2,0}\s_0\unit=\pi_{2,0}\MZ=H^{-2,0}=0$. Thus 
  the $\KMW$-module map 
  \[ \KMW_{2+\star}\directsum \KMW \to \pi_{1-\bideg}\f_1\unit \]
  sending $(a,b)$ to $a\cdot \nu + b\cdot \eta_\Top$ is well-defined
  over any base scheme in which motivic cohomology in weight zero
  vanishes in degree $-1$.
  Certain relations hold before inverting the exponential characteristic.
  The relation $\eta\nu=0$ holds by \cite{dugger-isaksen.hopf}
  over $\Spec(\ZZ)$. Also $2\eta_{\Top}=0$,
  again over any base scheme. If $2$ is invertible in the base scheme,
  $\unitkq(\nu)=0$, where $\unitkq\colon\unit\to\kq$ is the unit map
  figuring in Lemma~\ref{lem:pi1f1kq}.

  Assuming that the exponential characteristic is now implicitly inverted,
  the exact sequence from Theorem~\ref{thm:oneline}
  implies that $\pi_{3,2}\unit=\ZZ/24$, generated by $\nu$.
  Either by reference to complex realization
  or by the multiplicative structure of the slice spectral sequence
  (more precisely, only the effect of multiplying
  with the first algebraic Hopf map $\eta$),
  the element $\eta^2\eta_{\Top}\in \pi_{3,2}\unit$ is the unique
  nontrivial element of order two, hence $\eta^2\eta_{\Top}=12\nu=6 \hyper \nu$.
  Thus there is an induced $\KMW$-module homomorphism
  \[ \KMW_{2+\star}\{\nu\}\directsum \KMW\{\eta_\Top\}/
  (\eta\nu,2\eta_{\Top},\eta^2\eta_{\Top}-12\nu)\to \pi_{1-\bideg}\f_1\unit.\]
  The $\KMW$-module homomorphism 
  $\pi_{1-\bideg}\f_1\unit\to\pi_{1-\bideg}\f_1\kq$ induced by the
  unit map $\unitkq\colon\unit\to\kq$ then factors over the
  isomorphism
  \[ \KMW/(2,\eta^2)\iso \pi_{1-\bideg}\f_1\kq\]
  from Lemma~\ref{lem:pi1f1kq}. There results a commutative diagram
  \begin{equation}\label{eq:slice-pi1f1unit}
    \begin{tikzcd}
      \frac{\KMW_{2+\star}\{\nu\}}{(24\nu,\eta\nu)} \ar[r]\ar[d] & 
      \frac{\KMW_{2+\star}\{\nu\}\directsum \KMW\{\eta_\Top\}}
      {(\eta\nu,2\eta_{\Top},\eta^2\eta_{\Top}-12\nu)} \ar[r] \ar[d] &
      \frac{\KMW\{\eta_{\Top}\}}{(2\eta_{\Top},\eta^2\eta_{\Top})} \ar[d] \\
      \KMil_{2+\star}/24 \ar[r] &
      \pi_{1-\bideg}\f_1\unit \ar[r] & 
      \pi_{1-\bideg}\f_1\kq 
    \end{tikzcd}
  \end{equation}
  of short exact sequences, where the upper horizontal row is
  short exact by direct computation. Note that the relations
  $2\eta_{\Top}=0$ and $\eta^2\eta_{\Top}=12\nu$ imply $24\nu=0$.
  The exactness of the lower horizontal row follows from the
  exactness of the sequence in Theorem~\ref{thm:oneline}
  and the fact that the unit $\unitkq\colon \unit \to \kq$
  induces an isomorphism on zero slices.
  The right vertical map in diagram~(\ref{eq:slice-pi1f1unit})
  is an isomorphism by Lemma~\ref{lem:pi1f1kq}, and the
  left vertical map in diagram~(\ref{eq:slice-pi1f1unit})
  is an isomorphism by direct inspection, whence the result.
\end{proof}

As in the case of $\kq$, the 
difference between $\pi_{1-\bideg}\f_1\unit$ and its image
$\f_1\pi_{1-\bideg}\unit$ in $\pi_{1-\bideg}\unit$ is given by the
image of 
$\pi_{2-\bideg}\s_0\unit\iso H^{\star-2,\star}\to \pi_{1-\bideg}\f_1\unit$. 
The short exact sequence
\[ 0\to \f_1\pi_{1-\bideg}\unit \to \pi_{1-\bideg}\unit \to \pi_{1-\bideg}\s_0\unit = H^{\star-1,\star} \to 0 \]
then describes 
the difference between the first two $\KMW$-modules. In particular,
the canonical map $\pi_{1+w,w}\f_1\unit\to \pi_{1+w,w}\unit$ is an isomorphism
for $w>-2$. For the applications to motivic Moore spectra, an important
weight is $w=0$, where a short exact sequence
\begin{equation}\label{eq:pi10unit}
0\to \KMil_2/24 \to \pi_{1,0}\unit \to \ZZ/2\directsum \KMil_1/2 \to 0
\end{equation}
results. One result of Theorem~\ref{thm:pi1f1unit} is that 
the sequence~(\ref{eq:pi10unit}) splits as a sequence of abelian groups,
but not as a sequence of $\KMW_0$-modules. In particular,
$24\cdot \pi_{1,0}\unit =0$, in contrast with 
$24 \cdot \pi_{-1,-2}\unit_{\QQ} \neq 0$, which is similar to
$24 \cdot \pi_{-1,-2}\KGL_{\QQ} \iso 24\cdot \ZZ/48 \neq 0$.

\begin{lemma}\label{lem:action-forms-pi1}
  The action of $\GW$ on $\pi_{1+\bideg}\unit$ is determined
  by the following equations for $u\in F^\times$.
  \begin{align*}
    \langle u\rangle \cdot \nu & = \nu\in \pi_{3,2}\unit \\
    \langle u \rangle \cdot \eta\eta_{\Top} & = 
               \eta\eta_{\Top} + [u^{12}]\nu \in \pi_{2,1}\unit \\
    \langle u\rangle \cdot \eta_{\Top} & = 
           \eta_{\Top} + [u] \eta\eta_{\Top}\in \pi_{1,0}\unit
  \end{align*}
  In particular, $\hyper \cdot \eta_{\Top} = \rho\eta\eta_{\Top}$,
  $\hyper\cdot \eta\eta_{\Top}=0$, 
  and $\hyper\cdot \nu = 2\nu$. If $n$ is even, then
  $n\hyper$ acts on $\pi_{1+\bideg}\f_{1}\unit$ as multiplication by $2n$.
\end{lemma}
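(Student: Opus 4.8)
The plan is to reduce the whole statement to formal manipulations in the graded ring $\KMW=\pi_{0-\bideg}\unit$ acting on the $\KMW$-module $\pi_{1+\bideg}\f_1\unit$, the only nontrivial input being the three relations $\eta\nu=0$, $2\eta_{\Top}=0$ and $\eta^2\eta_{\Top}=12\nu$ furnished by Theorem~\ref{thm:pi1f1unit} (together with the elementary facts that $\eta$ and the integers are central in $\KMW$, that $[1]=0$, and that $[uv]=[u]+[v]+\eta[u][v]$). Since $\langle u\rangle=1+\eta[u]$, the value of $\langle u\rangle\cdot x$ is determined by that of $\eta[u]\cdot x$. All three classes in question lie in weights $w>-2$ and lift to $\pi_{1+\bideg}\f_1\unit$ (as noted in the proof of Theorem~\ref{thm:pi1f1unit}), where $\pi_{1+w,w}\f_1\unit\to\pi_{1+w,w}\unit$ is an isomorphism, so the computations may be carried out in $\f_1\unit$.

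First I would establish the three displayed equations. For $\eta_{\Top}$, centrality of $\eta$ gives $\eta[u]\cdot\eta_{\Top}=[u]\cdot\eta\eta_{\Top}$, which is the third equation. For $\nu$, likewise $\eta[u]\cdot\nu=[u]\cdot\eta\nu=0$, whence $\langle u\rangle\cdot\nu=\nu$. For $\eta\eta_{\Top}=\eta\cdot\eta_{\Top}$, two applications of centrality of $\eta$ give $\eta[u]\cdot\eta\eta_{\Top}=(\eta^2[u])\cdot\eta_{\Top}=[u]\cdot(\eta^2\eta_{\Top})=[u]\cdot 12\nu=12[u]\nu$. A short induction from $[u^{n}]=[u]+[u^{n-1}]+\eta[u][u^{n-1}]$ and $\eta\nu=0$ shows $[u^{n}]\nu=n[u]\nu$, so $12[u]\nu=[u^{12}]\nu$ and the second equation follows. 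That these equations, together with the presentation of Theorem~\ref{thm:pi1f1unit} and the triviality of the $\eta$-action on the zero slice $\pi_{1-\bideg}\s_0\unit$, determine the $\GW$-action on all of $\pi_{1+\bideg}\unit$ is then a formality.

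The ``In particular'' clause now reduces to substitution, using $\hyper=\langle 1\rangle+\langle -1\rangle$ and $\rho=[-1]$: one gets $\hyper\cdot\nu=\nu+\nu=2\nu$; $\hyper\cdot\eta_{\Top}=2\eta_{\Top}+\rho\eta\eta_{\Top}=\rho\eta\eta_{\Top}$ since $2\eta_{\Top}=0$; and $\hyper\cdot\eta\eta_{\Top}=2\eta\eta_{\Top}+\bigl([1^{12}]+[(-1)^{12}]\bigr)\nu=0$ since $1^{12}=(-1)^{12}=1$, $[1]=0$, and $2\eta\eta_{\Top}=\eta\cdot(2\eta_{\Top})=0$. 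For the last assertion, observe that $\hyper=2+\eta[-1]$ is central in $\KMW$: it commutes with every $[u]$ by the symmetry $\eta[u][v]=\eta[v][u]$ (itself immediate from $[uv]=[vu]$), and with $\eta$ because $\eta$ is central. Hence $n\hyper$ and multiplication by $2n$ are both $\KMW$-module endomorphisms of $\pi_{1+\bideg}\f_1\unit$; since this module is generated over $\KMW$ by $\nu$ and $\eta_{\Top}$ (Theorem~\ref{thm:pi1f1unit}), it suffices to compare the two endomorphisms on these generators. On $\nu$ one has $n\hyper\cdot\nu=2n\nu$; on $\eta_{\Top}$ one has $n\hyper\cdot\eta_{\Top}=n\rho\eta\eta_{\Top}=\rho\eta\cdot(n\eta_{\Top})=0=2n\eta_{\Top}$, as $n$ is even and $2\eta_{\Top}=0$. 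So the two endomorphisms coincide.

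Beyond Theorem~\ref{thm:pi1f1unit} there is no real obstacle here; the only two points deserving a moment's attention are that the non-commutativity of $\KMW$ never actually intervenes — one only ever commutes $\eta$, the integers, or the central element $\hyper$ past other elements — and the elementary identity $[u^{n}]\nu=n[u]\nu$.
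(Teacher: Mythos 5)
Your proposal is correct and takes essentially the same route as the paper: the paper's proof simply invokes Theorem~\ref{thm:pi1f1unit} together with the identification $\pi_{1+w,w}\f_1\unit\iso\pi_{1+w,w}\unit$ for $w>-2$, and your argument merely makes explicit the Milnor--Witt manipulations (centrality of $\eta$ and of $\hyper$, the relations $\eta\nu=0$, $2\eta_{\Top}=0$, $\eta^2\eta_{\Top}=12\nu$, and $[u^{12}]\nu=12[u]\nu$) that the paper leaves to the reader.
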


\begin{proof}
  This follows from Theorem~\ref{thm:pi1f1unit}, once
  the identification $\pi_{1+(w)}\f_1\unit = \pi_{1+(w)}\unit$
  for $w>-2$ is deduced from vanishing results for
  integral motivic cohomology of fields in weight $<2$.
\end{proof}

The situation for $\pi_{2+\bideg}\unit$ is a bit more delicate.
Nevertheless, the following can be read off from the
slice spectral sequence.

\begin{theorem}\label{thm:twoline}
  The element $\nu^2\colon \Sigma^{6,4}\unit\to \unit$
  induces an inclusion 
  \[ \KMW_{4+\star}\{\nu^2\}/(\eta\nu^2,2\nu^2)\to
  \pi_{2-\bideg}\unit\] 
  of $\KMW$-modules, which is an isomorphism for
  all $\star<-1$. In particular, 
  for all $w>4$, the group $\pi_{2+w,w}\unit \iso 0$.
\end{theorem}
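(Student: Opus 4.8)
The plan is to mimic the strategy used for $\pi_{1-\bideg}\f_1\unit$ in Theorem~\ref{thm:pi1f1unit}, but now for the two-line, using the slice spectral sequence for $\unit$ together with the comparison to $\kq$. First I would recall that $\nu^2$ lifts naturally to $\pi_{6,4}\f_4\unit$, hence defines an element of $\pi_{6,4}\f_1\unit$ and (after checking it survives) of $\pi_{6,4}\unit$; the relations $\eta\nu=0$ from \cite{dugger-isaksen.hopf} and $\nu^2\cdot 2 = 0$ (which already holds topologically, since $\nu^2$ generates $\ZZ/2$ in $\pi_6^s$, and the statement of Theorem~\ref{thm:pi1f1unit} gives $\eta^2\eta_{\Top} = 12\nu$ so $24\nu=0$, hence $12\nu^2 = \eta^2\eta_{\Top}\nu$ and one reduces the order question via the module structure). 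Granting this, one obtains a well-defined $\KMW$-module map
\[
\KMW_{4+\star}\{\nu^2\}/(\eta\nu^2,2\nu^2)\to \pi_{2-\bideg}\unit,
\]
and the task is to show it is injective, and an isomorphism in weights with $\star<-1$ (equivalently $w>3$ after the usual reindexing; the claimed vanishing $\pi_{2+w,w}\unit\iso 0$ for $w>4$ is then the statement that the source vanishes there, i.e.\ $\KMW_{4+\star}$ is concentrated in degrees $4+\star\le 0$, so $\star\le -4$, which forces $w\le 4$).

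Concretely I would proceed through the slice filtration. The cofiber sequence $\f_1\unit\to\unit\to\s_0\unit=\MZ$ together with $\pi_{2+\bideg}\MZ = H^{\star-2,\star}$ and $\pi_{3+\bideg}\MZ=H^{\star-3,\star}$ reduces the computation of $\pi_{2-\bideg}\unit$ in high weight to $\pi_{2-\bideg}\f_1\unit$, since integral motivic cohomology of a field vanishes in negative weight and is concentrated appropriately — this is where the restriction $\star<-1$ enters, controlling the contribution of $H^{\star-2,\star}$ and $H^{\star-3,\star}$. Then one analyses $\pi_{2-\bideg}\f_1\unit$ via the slices $\s_q\unit$ for $q\ge 1$, whose homotopy is given by (shifted, twisted) motivic cohomology with the slice differentials computed in~\cite{rso.oneline} (the $d_1$ being built from $\Sq^2$, $\Sq^3$, and the relevant secondary operation, together with $\tau$-multiplication). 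The point is that in the weight range $\star<-1$ the only surviving contribution on the two-line is the single tower generated by $\nu^2$ in $\s_2\unit$ (a $\kmil$-module on one generator), all potential contributions from $\s_1\unit$, $\s_3\unit$, and higher slices being killed by slice differentials or landing outside the range; the $\KMW$-module structure, in particular the $\eta$-action, is then read off as in the proof of Theorem~\ref{thm:pi1f1unit}, giving $\eta\nu^2=0$ and the presentation $\KMW_{4+\star}\{\nu^2\}/(\eta\nu^2,2\nu^2)$.

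The main obstacle is the bookkeeping of slice differentials into and out of the relevant tri-degrees: one must verify that no differential from a class on the putative two-line hits something nonzero (so that $\nu^2$ and its $\KMW$-multiples genuinely survive), and conversely that every other contribution to $\pi_{2+w,w}\unit$ for $w$ large is wiped out. I expect the $d_1$-differentials governed by the total power operation (the motivic analogue of $\Sq^2$ and its Bockstein, plus the secondary operation appearing in the analysis of $\pi_1$) to do the job, but checking this in the $w>3$ range, and pinning down the boundary case $\star=-1$ (where the map need not be surjective because of a possible $H^{\star-2,\star}$ or $H^{\star-3,\star}$ contribution, or an $\eta$-linked element from $\s_1\unit$), is the delicate part; this is precisely why the theorem only claims an isomorphism for $\star<-1$ and an inclusion in general. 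The final vanishing statement $\pi_{2+w,w}\unit\iso 0$ for $w>4$ then follows since $\KMW_n = 0$ for $n>0$ is false — rather, $\KMW_{4+\star}\{\nu^2\}$ with $2\nu^2=0$ means the source is $\kmil_{4+\star}\{\nu^2\}$, which vanishes for $4+\star>0$, i.e.\ for $w = \star + (\text{shift}) $ large; tracking the indexing convention $\pi_{2+(w)} = \pi_{2+w,w}$ one gets the asserted range $w>4$.
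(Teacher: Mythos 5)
Your argument for the relation $2\nu^2=0$ has a genuine gap, and this relation is needed even for the map $\KMW_{4+\star}\{\nu^2\}/(\eta\nu^2,2\nu^2)\to\pi_{2-\bideg}\unit$ to be well defined. The appeal to topology does not transfer: complex realization can only detect that an element is nonzero, it cannot force torsion upstairs, and arguing that $\pi_{6,4}\unit$ injects into $\pi_6^s$ would be circular since that is part of what is being proved. Your module-theoretic fallback also falls short: from $\eta^2\eta_\Top=12\nu$ and $\eta\nu=0$ you get $12\nu^2=(12\nu)\nu=\eta\eta_\Top\cdot\eta\nu=0$, hence only that the order of $\nu^2$ divides $12$, not $2$. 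The paper obtains $2\nu^2=0$ purely formally from the $\varepsilon$-graded commutativity of $\pi_{\ast+\bideg}\unit$ (the sign rule of \cite[Equation (2.4)]{dugger-isaksen.hopf}): since $\nu\in\pi_{3,2}\unit$ has odd simplicial degree and even weight, $\nu^2=(-1)^{1\cdot 1}\varepsilon^{4}\nu^2=-\nu^2$, because $\varepsilon^2=1$. You should replace your order argument by this observation.

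For the injectivity and the isomorphism in the range $\star<-1$, your overall strategy (slice spectral sequence, reduction from $\unit$ to $\f_1\unit$ via $\s_0\unit=\MZ$) is in the same spirit as the paper, which however does not carry out the computation either: it cites \cite{rso.oneline} and \cite[Theorem 8.3]{roendigs.etainv} and defers details to \cite{rso.twoline}. Your assertion that in the range $\star<-1$ ``the only surviving contribution is the $\nu^2$-tower in $\s_2\unit$'' is exactly the substantial claim that needs proof, and your sketch is missing the key input the paper identifies: the computation of the $\eta$-inverted sphere, which is what controls the $\eta$-periodic (high negative $\star$, i.e.\ high weight) contributions. This is not a formality -- on the one-line the $\KMW$-module $\KMW\{\eta_\Top\}$ contributes in arbitrarily negative $\star$ through Witt groups, so the absence of analogous $\eta$-periodic families on the two-line is precisely what \cite[Theorem 8.3]{roendigs.etainv} rules out, and differentials alone as you describe them will not obviously do this. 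Finally, two indexing slips: with the paper's conventions $\pi_{2-\bideg}\unit$ in degree $\star$ is $\pi_{2-\star,-\star}\unit$, so $\star<-1$ corresponds to weight $w=-\star>1$ (not $w>3$); and $\kmil_{4+\star}$ vanishes for $4+\star<0$ (not $4+\star>0$), which is what gives the vanishing $\pi_{2+w,w}\unit\iso 0$ for $w>4$.
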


\begin{proof}
  Since $\eta\nu=0$ \cite{dugger-isaksen.hopf}, also
  $\eta\nu^2=0$. Moreover, the $\varepsilon$-graded
  commutativity of $\pi_{\ast+\bideg}\unit$ implies that
  $\nu^2=-\nu^2$, whence a map 
  $\KMW_{4+\star}\{\nu^2\}/(\eta\nu^2,2\nu^2)\to
  \pi_{2-\bideg}\unit$ of $\KMW$-modules exists.
  The slice spectral sequence for $\pi_{2+\bideg}\unit$
  shows its injectivity, as well as the isomorphism
  statement, using results from \cite{rso.oneline}
  and \cite[Theorem 8.3]{roendigs.etainv}; details are
  to be given in~\cite{rso.twoline}.
\end{proof}

\section{Elementary properties of motivic Moore spectra}
\label{sec:elem-prop-motiv}

\begin{definition}
\label{defn:motivic-moore-spectrum}
Let $s\geq w \in \ZZ$, and let $\alpha\colon \Sigma^{s,w}\unit \to \unit$ be
an endomorphism. Choosing a homotopy cofiber sequence
\[ \Sigma^{s,w}\unit\xrightarrow{\alpha}\unit\xrightarrow{c}\moore{\alpha}\xrightarrow{d}\Sigma^{s+1,w}\unit \]
defines the motivic Moore spectrum $\moore{\alpha}$. 
\end{definition}

By definition, a motivic Moore spectrum for $a$ is unique up to equivalence.
The restriction $s\geq w$ is reasonable at least over a field by
Morel's connectivity theorem \cite{morel.connectivity}.

\begin{proposition}
\label{prop:homotopy-groups-moore}
Let $\alpha\colon \Sigma^{s,w}\unit\to\unit$ be an endomorphism. The canonical
maps induce a short exact sequence
\[ 0 \to \pi_{n+\bideg}\unit/\alpha\pi_{n-s+w+(\star-w)}\unit 
\xrightarrow{c_\ast} \pi_{n+\bideg}\moore{\alpha}
\xrightarrow{d_\ast} {}_{\alpha}\pi_{n-1-s+w+(\star-w)}\unit\to 0\]
of $\KMW$-modules, with target the submodule of elements annihilated
by $\alpha$.
\end{proposition}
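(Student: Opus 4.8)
The plan is to apply the homotopy-group functor $\pi_{\ast+\bideg}(-)$ to the defining homotopy cofiber sequence and extract a four-term exact sequence, then identify the two outer terms. Concretely, the cofiber sequence
\[ \Sigma^{s,w}\unit\xrightarrow{\alpha}\unit\xrightarrow{c}\moore{\alpha}\xrightarrow{d}\Sigma^{s+1,w}\unit \]
is part of a long exact sequence in each bidegree $(n+\star,\star)$, and summing over $\star$ (which is exact, since direct sums are exact) yields the long exact sequence of $\KMW$-modules
\[ \pi_{n+1+\bideg}\Sigma^{s+1,w}\unit \xrightarrow{\alpha_\ast} \pi_{n+\bideg}\unit \xrightarrow{c_\ast} \pi_{n+\bideg}\moore{\alpha} \xrightarrow{d_\ast} \pi_{n+\bideg}\Sigma^{s+1,w}\unit \xrightarrow{\alpha_\ast} \pi_{n-1+\bideg}\unit. \]
Breaking this into a short exact sequence gives
\[ 0\to \operatorname{coker}\bigl(\pi_{n+1+\bideg}\Sigma^{s+1,w}\unit\xrightarrow{\alpha_\ast}\pi_{n+\bideg}\unit\bigr)\xrightarrow{c_\ast}\pi_{n+\bideg}\moore{\alpha}\xrightarrow{d_\ast}\ker\bigl(\pi_{n+\bideg}\Sigma^{s+1,w}\unit\xrightarrow{\alpha_\ast}\pi_{n-1+\bideg}\unit\bigr)\to 0. \]

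Next I would identify the reindexed terms. Since $\Sigma^{s,w}\unit$ has $\pi_{a,b}\Sigma^{s,w}\unit \iso \pi_{a-s,b-w}\unit$, one gets $\pi_{n+1+\bideg}\Sigma^{s+1,w}\unit = \bigoplus_{\star}\pi_{n+1+\star,\star}\Sigma^{s+1,w}\unit \iso \bigoplus_\star \pi_{n-s+\star, \star-w}\unit$, and rewriting the internal degree so that the new weight variable is $\star-w$, this is exactly $\pi_{n-s+w+(\star-w)}\unit$ in the notation of the statement; similarly $\pi_{n+\bideg}\Sigma^{s+1,w}\unit \iso \pi_{n-1-s+w+(\star-w)}\unit$. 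The map $\alpha_\ast$ is precomposition with $\alpha$, i.e. multiplication by $\alpha$ in the $\pi_{0+\bideg}\unit$-module structure (up to the usual sign coming from $\varepsilon$-graded commutativity, which does not affect the image or kernel as submodules). Hence the cokernel is $\pi_{n+\bideg}\unit/\alpha\pi_{n-s+w+(\star-w)}\unit$ and the kernel is ${}_\alpha\pi_{n-1-s+w+(\star-w)}\unit$, as claimed.

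Finally I would check $\KMW$-linearity of all maps: $c_\ast$ and $d_\ast$ are induced by maps of spectra, hence are $\pi_{0+\bideg}\unit$-linear, and $\alpha_\ast$ is likewise $\pi_{0+\bideg}\unit$-linear because the module structure is given by smashing with $\unit$-module maps, which commute with $\alpha$; by Theorem~\ref{thm:zeroline} this is the $\KMW$-module structure. This makes the displayed short exact sequence one of $\KMW$-modules, with the outer terms inheriting their module structure as a quotient and a submodule respectively. There is no real obstacle here: the only point requiring a word of care is the bookkeeping of the bidegree shifts and the fact that the suspension $\Sigma^{s,w}$ on the source of $\alpha$ produces a shift by $s+1$ (not $s$) on the connecting term $d$, which is already built into the cofiber sequence in Definition~\ref{defn:motivic-moore-spectrum}; once that is tracked correctly the reindexing is forced.
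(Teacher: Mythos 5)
Your argument is correct and is exactly the route the paper takes: the paper's proof simply cites the defining homotopy cofiber sequence, and your write-up fills in the long exact sequence, the bidegree reindexing $\pi_{n+\star,\star}\Sigma^{s+1,w}\unit\iso\pi_{n-1-s+w+(\star-w)}\unit$, and the $\KMW$-linearity, all of which check out. Nothing is missing; you have merely made explicit what the paper leaves to the reader.
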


\begin{proof}
  This follows from the homotopy cofiber sequence
  \[ \Sigma^{s,w}\unit\xrightarrow{\alpha}\unit\xrightarrow{c}\moore{\alpha}\xrightarrow{d}\Sigma^{s+1,w}\unit \]
  defining the motivic Moore spectrum.
\end{proof}

In particular, Morel's connectivity theorem implies 
with Proposition~\ref{prop:homotopy-groups-moore} that
$\pi_{0+\bideg}\moore{\alpha}\iso \KMW/\alpha\KMW$ if $\alpha\in \KMW$ (that is, if $s=w$).

\section{Toda brackets}
\label{sec:toda-brackets}

Consider three composable maps
\[ \D \xrightarrow{\gamma} \E \xrightarrow{\beta}\F \xrightarrow{\alpha} \GG \]
of motivic spectra such that $\beta\gamma = 0 = \alpha\beta$.
The {\em Toda bracket\/} 
\[ \langle \alpha,\beta,\gamma\rangle \quad \quad (\Mod \alpha\circ [\Sigma^{1,0}\D,\F]+[\Sigma^{1,0}\E,\GG] \circ \Sigma^{1,0}\gamma) \]
is the coset of the displayed subgroup of $[\Sigma^{1,0}\D,\GG]$ given
by those compositions $\Sigma^{1,0}\D\to \C(\beta) \to \GG$ such that
the obvious diagrams commute. The base scheme or field may be indicated
by a subscript. The most relevant case is where
all the motivic spectra involved are appropriate suspensions of
the motivic sphere spectrum $\unit$. See the classical 
source \cite{toda.composition}, as well as \cite{dugger-isaksen.real} and
\cite{guillou-isaksen.real} for interesting Toda brackets in
the motivic stable homotopy category. We consider
a few examples and make no claim to originality. 
Angled brackets are used
to denote both Toda brackets and quadratic forms (as in the paragraph after
Theorem~\ref{thm:zeroline}). 
The reader should be aware of this possible confusion, but
context will always make the meaning unambiguous.

\begin{proposition}\label{prop:toda-brackets}
The following equalities of subsets of $\pi_{\ast+\bideg}\unit$ hold.
\begin{align}\label{align:toda}
  \langle \hyper,\eta,\hyper \rangle & = 
  \bigl\lbrace \eta\eta_\Top+[\phi^2]\nu \colon \phi^2\in 
  2\KMil_{1}\{\nu\}/24\bigr\rbrace \\\label{align:toda-2}
  \langle \eta,\hyper,\eta\rangle & = \{6\nu,-6\nu\}  \\
  \label{align:toda-3}
  \langle \eta,\nu,\eta \rangle & = \{\nu^2 \}  \\
  \label{align:toda-4}
  \langle 2+11\hyper, \nu, 2+11\hyper\rangle_{\RR} & = \{\rho^2\nu^2\} 
\end{align}
\end{proposition}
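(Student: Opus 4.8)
The plan is to compute all four Toda brackets by a combination of the indeterminacy analysis coming from Proposition~\ref{prop:homotopy-groups-moore} and the explicit structure of $\pi_{1+\bideg}\unit$ and $\pi_{2+\bideg}\unit$ from Theorems~\ref{thm:pi1f1unit} and~\ref{thm:twoline}, together with Lemma~\ref{lem:action-forms-pi1}. For each bracket $\langle\alpha,\beta,\gamma\rangle$ I would proceed in three steps: first check that the bracket is defined (i.e. that the two relevant composites vanish), using the relations $\eta\nu=0$, $2\eta_\Top=0$, $\hyper\cdot\eta\eta_\Top = 0$, $\hyper\cdot\nu = 2\nu$, and $\nu^2=-\nu^2$ recorded above; second compute the indeterminacy subgroup $\alpha\circ[\Sigma^{1,0}\D,\F]+[\Sigma^{1,0}\E,\GG]\circ\Sigma^{1,0}\gamma$ by reading off the relevant homotopy groups (all in stems $0$, $1$, or $2$); and third pin down a representative, using either a known shuffle/juggling identity, complex or real realization, or a direct lift through a Moore spectrum $\moore{\beta}$.

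For (\ref{align:toda}) the relevant groups are $\pi_{2,1}\unit$ (image of $\hyper$ applied there) and $\pi_{1,0}\unit$; by Lemma~\ref{lem:action-forms-pi1}, $\hyper\cdot\eta_\Top = \rho\eta\eta_\Top$, so the indeterminacy picks out exactly the coset described, and one identifies the bracket with $\eta\eta_\Top$ modulo that coset after checking that the lift through $\moore{\eta}$ lands on $\eta\eta_\Top$; the description $\phi^2\in 2\KMil_1\{\nu\}/24$ records the ambiguity $\hyper\cdot(\text{anything in }\pi_{2,1})$. For (\ref{align:toda-2}), since $\langle\eta,\hyper,\eta\rangle\subset\pi_{3,2}\unit=\ZZ/24\{\nu\}$ and the indeterminacy is $\eta\cdot\pi_{2,1}\unit + \pi_{2,1}\unit\cdot\eta$, which is generated by $\eta\cdot\eta\eta_\Top = \eta^2\eta_\Top = 12\nu$, the bracket is a well-defined element of $\ZZ/24\{\nu\}/12\nu\cong\ZZ/12$; identifying it as $6\nu$ (equivalently $-6\nu$, matching the $\varepsilon$-graded sign) comes from the classical topological computation $\langle\eta_\Top,2,\eta_\Top\rangle = \{\eta_\Top^3,\ldots\}$ pushed into the motivic setting via the unit map, or via the first shuffle $\langle\eta,\hyper,\eta\rangle\cdot\eta = -\eta\langle\hyper,\eta,\hyper\rangle$. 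For (\ref{align:toda-3}), $\langle\eta,\nu,\eta\rangle\subset\pi_{6,4}\unit$ and the indeterminacy $\eta\cdot\pi_{5,3}\unit + \pi_{5,3}\unit\cdot\eta$ vanishes in the relevant weight by Theorem~\ref{thm:twoline} (the two-line is $\KMW_{4+\star}\{\nu^2\}/(\eta\nu^2,2\nu^2)$, on which $\eta$ acts trivially and which has nothing in the preceding stem contributing), so the bracket is a single element; one shows it equals $\nu^2$ by complex realization to the classical $\langle\eta,\nu,\eta\rangle = \{\nu^2\}$ in $\pi_6\sphere$, noting $\nu^2$ generates. For (\ref{align:toda-4}) over $\RR$, the element $2+11\hyper$ satisfies $(2+11\hyper)\nu = 2\nu + 22\nu = 24\nu = 0$, so the bracket is defined; the indeterminacy is $(2+11\hyper)\cdot\pi_{5,3}\unit_\RR$, which one checks vanishes, and the representative $\rho^2\nu^2$ is extracted by real realization together with the relation $\hyper = 1-\varepsilon$ and the behavior of $\rho$ under $\GW$-action.

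The main obstacle I expect is the third step in each case: producing an actual representative rather than merely bounding the bracket. The indeterminacy computations are routine once Theorems~\ref{thm:pi1f1unit},~\ref{thm:twoline} and Lemma~\ref{lem:action-forms-pi1} are in hand, but identifying the precise class requires either a careful motivic lift through the relevant Moore spectrum (constructing the nullhomotopies explicitly and composing) or importing a topological/realization computation and arguing that it detects the motivic class — the latter works cleanly only when the realization map is injective on the target group, which needs checking in each weight. For (\ref{align:toda}) in particular, the coefficient $[\phi^2]$ with $\phi^2$ ranging over $2\KMil_1/24$ reflects that the bracket genuinely depends on the arithmetic of $F$ through $\GW(F)$, so the argument there cannot be reduced to realization alone and must use the $\KMW$-module structure from Theorem~\ref{thm:pi1f1unit} directly.
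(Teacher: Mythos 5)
Your overall strategy---compute the indeterminacy from Theorems~\ref{thm:pi1f1unit} and~\ref{thm:twoline} together with Lemma~\ref{lem:action-forms-pi1}, then pin down a representative by a realization functor or a classical Toda bracket---is the same as the paper's, and your treatment of (\ref{align:toda-3}) and (\ref{align:toda-4}) matches the actual proof, up to a degree slip: the bracket in (\ref{align:toda-4}) lies in $\pi_{4,2}\unit_{\RR}$, so its indeterminacy is $(2+11\hyper)\pi_{4,2}\unit_{\RR}+\pi_{4,2}\unit_{\RR}(2+11\hyper)$, which vanishes because $\pi_{4,2}\unit_{\RR}\iso\kmil_2\{\nu^2\}$ is killed by $2$ and by $\hyper$; it is not $(2+11\hyper)\pi_{5,3}\unit_{\RR}$. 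Two of your steps would, however, fail as written. For (\ref{align:toda-2}) the classical input is misquoted: $\langle\eta_\Top,2,\eta_\Top\rangle_\Top=\{6\nu_\Top,-6\nu_\Top\}$, and it does \emph{not} contain $\eta_\Top^3=12\nu_\Top$; importing $\eta_\Top^3$ would give the coset $\{0,12\nu\}$, contradicting the claimed value. Your backup shuffle $\langle\eta,\hyper,\eta\rangle\cdot\eta=-\eta\langle\hyper,\eta,\hyper\rangle$ does not typecheck (the two sides live in $\pi_{4,3}\unit$ and $\pi_{3,2}\unit$ respectively); the usable juggling is $\langle\eta,\hyper,\eta\rangle\cdot\hyper=\eta\cdot\langle\hyper,\eta,\hyper\rangle$, which together with (\ref{align:toda}), $\hyper\nu=2\nu$ and $\eta^2\eta_\Top=12\nu$ does force $\{6\nu,-6\nu\}$. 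The paper instead realizes to the (correct) classical bracket and uses that $\pi_{3,2}\unit\iso\ZZ/24$ is independent of the base field after inverting the exponential characteristic---a point you flag only in passing but which is exactly what makes the realization argument legitimate.

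For (\ref{align:toda}) the relation you invoke, $\hyper\cdot\eta_\Top=\rho\eta\eta_\Top$, concerns $\pi_{1,0}\unit$ and is irrelevant: the indeterminacy sits in $\pi_{2,1}\unit\iso\KMil_0\{\eta\eta_\Top\}/2\oplus\KMil_1\{\nu\}/24$, and what is needed is $\hyper\cdot\eta\eta_\Top=0$ and $\hyper\cdot\nu=2\nu$, giving $\hyper\circ\pi_{2,1}\unit=2\KMil_1\{\nu\}/24$ as in the paper (which also records why the precomposition part contributes nothing new, via $[-1]\eta\eta\eta_\Top=0=[-1]\eta\nu$). More importantly, the identification of $\eta\eta_\Top$ as an actual representative is left by you as an unspecified ``lift through $\moore{\eta}$''; this is precisely the nontrivial step, and the paper settles it by realization functors together with the classical equality for $\langle 2,\eta_\Top,2\rangle_\Top$, not by a $\KMW$-module argument. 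Your closing remark that realization ``cannot'' suffice for (\ref{align:toda}) therefore diverges from the paper; if you want to pursue that route you must actually supply the module-theoretic or Moore-spectrum computation (e.g.\ via Proposition~\ref{prop:toda} with $\moore{\hyper}$), which the proposal does not do. As it stands, the proposal is a correct outline for (\ref{align:toda-3}) and (\ref{align:toda-4}) but has concrete gaps in (\ref{align:toda}) and an erroneous key input in (\ref{align:toda-2}).
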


\begin{proof}
  The indeterminacy in~(\ref{align:toda}) 
  is the subgroup $\hyper \circ \pi_{2,1}\unit = 2\KMil_{1}\{\nu\}/24$, using
  the isomorphism
  $\pi_{2,1}\unit\iso \KMil_0\{\eta\eta_{\Top}\}/2\oplus \KMil_{1}\{\nu\}/24$
  from Theorem~\ref{thm:pi1f1unit} and the equalities $[-1]\eta(\eta\eta_{\Top})=[-1]\eta^2\eta_\Top= [-1]12\nu=0=[-1]\eta\nu$ given in Lemma~\ref{lem:action-forms-pi1}. That the
  Toda bracket contains $\eta\eta_{\Top}$ follows from appropriate realization
  functors and the equality $\langle 2,\eta_{\Top},2\rangle_{\Top} = \{\eta_{\Top}\}$ in classical stable homotopy groups \cite{toda.composition}.
  
  The indeterminacy in~(\ref{align:toda-2})
  is $\eta \circ \pi_{2,1}=\{0,12\nu\}$, since 
  Theorem~\ref{thm:pi1f1unit} implies 
  $\pi_{2,1}\unit\iso \KMil_0\{\eta\eta_{\Top}\}/2\oplus \KMil_{1}\{\nu\}/24$,
  and $\eta\eta\eta_{\Top}=12\nu$, whereas $\eta\nu=0$.
  Complex
  realization maps $\langle \eta,\hyper,\eta\rangle$ to
  $\langle \eta_\Top,2,\eta_{\Top} \rangle_\Top = \{6\nu_{\Top},-6\nu_{\Top}\}$.
  The relevant group $\pi_{3,2}\unit$ does not depend on the base
  field (at least after inverting the exponential characteristic),
  giving the result.
  
  The indeterminacy in~(\ref{align:toda-3}) is the subgroup
  $\eta\circ \pi_{4,2} = \{0\}$, since $\pi_{4,2}\unit \iso \KMil_2\{\nu^2\}/2$
  by Theorem~\ref{thm:twoline}, and $\eta\nu^2=0$. The unique element
  in $\langle \eta, \nu,\eta\rangle$ has to be the unique nontrivial
  one in $\pi_{6,4}\unit = \ZZ/2$ by appropriate realization.

  Similar to the previous case, the indeterminacy in~(\ref{align:toda-4})
  is $\{0\}$ by Theorem~\ref{thm:twoline}, since $(2+11\hyper)\nu^2=0$.
  Real realization sends 
  $\langle 2+11\hyper, \nu, 2+11\hyper\rangle_{\RR}$ to
  $\langle 2, \eta_{\Top},2\rangle_{\Top}=\{\eta_{\Top}^2\}$, whence
  the previous Toda bracket has to contain the unique nonzero element
  in $\pi_{4,2}\unit_{\RR}$. 
\end{proof}

The following statement, which could be formulated in greater
generality, indicates the relevance of Toda brackets for 
the structure of motivic Moore spectra. The notation is as in
Proposition~\ref{prop:homotopy-groups-moore}.
 
\begin{proposition}\label{prop:toda}
  Suppose $\alpha\in \pi_{s,w}\unit$,
  $\beta\in \pi_{t,x}\unit$, and $\gamma\in \pi_{u,y}\unit$
  are elements with $\alpha\beta=0=\beta\gamma$. The Toda bracket
  $\langle \alpha,\beta,\gamma\rangle$ coincides with the set
  of elements $\delta\in \pi_{s+t+u+1,w+x+y}\unit$ for which there
  exists an element $\tilde{\beta}\in \pi_{s+t+1,w+x}\moore{\alpha}$ with 
  $d_\ast(\tilde{\beta}) = \beta$ and $c_\ast(\delta+\alpha\cdot \pi_{t+u+1,x+y}\unit)=\tilde{\beta}\cdot \gamma$.
\end{proposition}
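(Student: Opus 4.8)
The plan is to recognise the set described in the statement as one of the classical models of the Toda bracket, namely the one built from the fiber of the outer map $\alpha$ — which, up to the evident shift, is the cofiber $\moore{\alpha}$ — and then to invoke the classical fact that this model computes the same coset as the model via $\C(\beta)$ underlying the definition in Section~\ref{sec:toda-brackets}. First I would set up the bookkeeping: regard $\alpha$, $\beta$, $\gamma$ as suspensions of maps out of $\unit$, giving composable maps
\[ \Sigma^{s+t+u,w+x+y}\unit \xrightarrow{\gamma} \Sigma^{s+t,w+x}\unit \xrightarrow{\beta} \Sigma^{s,w}\unit \xrightarrow{\alpha} \unit \]
(suppressing the suspensions from the notation for the maps) with $\alpha\beta=0=\beta\gamma$, and note that in this picture the cofiber of $\alpha$ is exactly $\moore{\alpha}$ together with its structure maps $c$ and $d$.

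Next I would run the long exact sequence associated to the cofiber sequence $\Sigma^{s,w}\unit\xrightarrow{\alpha}\unit\xrightarrow{c}\moore{\alpha}\xrightarrow{d}\Sigma^{s+1,w}\unit$, i.e.\ the input of Proposition~\ref{prop:homotopy-groups-moore}. Since $\alpha\beta=0$, the class $\beta$ admits a lift $\tilde\beta\in\pi_{s+t+1,w+x}\moore{\alpha}$ with $d_\ast\tilde\beta=\beta$, and the set of all such lifts is the coset $\tilde\beta+c_\ast\bigl(\pi_{s+t+1,w+x}\unit\bigr)$, because $\ker d_\ast=\operatorname{im}c_\ast$. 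For any such lift, $d_\ast(\tilde\beta\cdot\gamma)=(d_\ast\tilde\beta)\cdot\gamma=\beta\gamma=0$, so $\tilde\beta\cdot\gamma$ lies in $\operatorname{im}\bigl(c_\ast\colon\pi_{s+t+u+1,w+x+y}\unit\to\pi_{s+t+u+1,w+x+y}\moore{\alpha}\bigr)$ and hence equals $c_\ast\delta$ for some $\delta$, unique modulo $\ker c_\ast=\alpha\cdot\pi_{t+u+1,x+y}\unit$. By the very definition of the fiber-of-$\alpha$ model, such a $\delta$ — letting $\tilde\beta$ range over all lifts and remembering $\delta$ only modulo $\ker c_\ast$ — is a representative of $\langle\alpha,\beta,\gamma\rangle$. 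I would then check the indeterminacy: replacing $\tilde\beta$ by $\tilde\beta+c_\ast\xi$ replaces the representative $\delta$ by $\delta+\xi\cdot\gamma$, so as $\tilde\beta$ ranges over all lifts and $\delta$ over its $\ker c_\ast$-coset, the resulting classes sweep out a single coset of $\alpha\cdot\pi_{t+u+1,x+y}\unit+\pi_{s+t+1,w+x}\unit\cdot\gamma$, which is precisely the indeterminacy subgroup in the definition of the Toda bracket. This identifies the set in the statement with the coset computed by the fiber-of-$\alpha$ model.

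The only substantive point left is that the fiber-of-$\alpha$ model agrees, as a coset, with the model via $\C(\beta)$ used in Section~\ref{sec:toda-brackets}; this is classical and I would cite \cite{toda.composition}. It may be proved directly by applying the octahedral axiom to the composable pair $\Sigma^{s+t,w+x}\unit\xrightarrow{\beta}\Sigma^{s,w}\unit\xrightarrow{\alpha}\unit$: since $\alpha\beta=0$, this produces a distinguished triangle relating $\C(\beta)$, the split cofiber of $\alpha\beta$, and $\moore{\alpha}$, through which the two constructions are matched. I expect this comparison, together with aligning the sign conventions implicit in the two triangulated-category constructions, to be the only step requiring genuine care; everything else is the routine exact-sequence chase above, a direct repackaging of Proposition~\ref{prop:homotopy-groups-moore}.
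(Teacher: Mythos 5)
Correct. The paper offers no actual argument here (its ``proof'' is left as an exercise in comparing diagrams), and your write-up supplies precisely the intended comparison: the long exact sequence of the defining cofiber sequence identifies the set in the statement as the fiber--cofiber (i.e.\ $\moore{\alpha}$-based) model of the bracket together with the correct indeterminacy coset $\alpha\cdot\pi_{t+u+1,x+y}\unit+\pi_{s+t+1,w+x}\unit\cdot\gamma$, and the remaining step is the classical agreement of this model with the $\C(\beta)$-model used in the paper's definition, which your octahedral-axiom argument (or a citation to \cite{toda.composition}) handles. The only point of genuine care, which you rightly flag, is the sign bookkeeping in that comparison (the two triangulated constructions can a priori differ by a sign on representatives); since the paper's definition does not fix these conventions precisely, your treatment is adequate as stated.
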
 

\begin{proof}
  The proof consists of comparing various diagrams in the motivic
  stable homotopy category and is left as an exercise to the reader.
\end{proof}

\begin{example}\label{ex:toda-eta-nu-eta}
  Consider $\alpha=\gamma=\eta\in \pi_{1,1}\unit$. Then
  $\pi_{5,3}\moore{\eta}$ contains an element $\tilde{\nu}$ with
  $\tilde{\nu}\cdot \eta\neq 0$. In fact, the Toda bracket
  $\langle \eta,\nu,\eta\rangle =\{\nu^2\}$ given in~(\ref{align:toda-3})
  contains a single
  nonzero element, and $c_\ast\colon \pi_{6,4}\unit \to \pi_{6,4}\moore{\eta}$
  is injective, since multiplication with $\eta$ is the
  zero map $\pi_{5,3}\unit = h^{1,1}\{\nu^2\}\to h^{0,0}\{\nu^2\}=\pi_{6,4}\unit$
  by Theorem~\ref{thm:twoline}.
\end{example}

\begin{example}\label{ex:toda-reals}
  Let $\alpha=\gamma=2+11\hyper \in \pi_{0,0}\unit_{\RR}$ 
  (or any formally real field). Then $\pi_{4,2}\moore{2+11\hyper}$
  contains an element $\tilde{\nu}$ with 
  $\tilde{\nu}\cdot (2+11\hyper)\neq 0$. Indeed, the Toda bracket
  given in~(\ref{align:toda-4})
  contains a nonzero element (uniquely for $\RR$), and
  $c_\ast\colon \pi_{4,2}\unit_{\RR}\to \pi_{4,2}\moore{2+11\hyper}$
  is injective, since multiplication with $2+11\hyper$ induces
  the zero map $\pi_{4,2}\unit_{\RR}=h^{2,2}\{\nu^2\}\to \pi_{4,2}\unit_{\RR}$ by
  Theorem~\ref{thm:twoline}.
\end{example}

\section{Multiplications}
\label{sec:multiplications}

The ``constant presheaf'' functor defines a strict symmetric monoidal
triangulated functor 
$\const\colon \SH\to \SH(S)$ for any base scheme $S$.
The motivic Moore spectra $\moore{n}$ for $n\in \NN$ are in its image.
In particular, multiplications or pairings on these motivic 
Moore spectra can be transferred from the corresponding
topological ones \cite{oka.mult-moore}.

\begin{lemma}\label{lem:mult-order-identity}
  Let $\alpha\colon \Sigma^{s,w}\unit\to \unit$ be an endomorphism and 
  $c\colon \unit\to \moore{\alpha}$ the map to the homotopy cofiber.
  There exists a left unital pairing 
  \[ \moore{\alpha}\smash \moore{\alpha}\to\moore{\alpha}\]
  if and only if the identity on $\moore{\alpha}$ is annihilated by $\alpha$.
\end{lemma}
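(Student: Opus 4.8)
The plan is to argue in both directions using the homotopy cofiber sequence $\Sigma^{s,w}\unit\xrightarrow{\alpha}\unit\xrightarrow{c}\moore{\alpha}\xrightarrow{d}\Sigma^{s+1,w}\unit$ and the exact sequence obtained by smashing it with $\moore{\alpha}$. First, for the easy direction, suppose a left unital pairing $\mu\colon\moore{\alpha}\smash\moore{\alpha}\to\moore{\alpha}$ exists, meaning $\mu\circ(c\smash\id)=\id_{\moore{\alpha}}$. Smashing the defining cofiber sequence with $\moore{\alpha}$ on the left gives $\Sigma^{s,w}\moore{\alpha}\xrightarrow{\alpha\smash\id}\moore{\alpha}\xrightarrow{c\smash\id}\moore{\alpha}\smash\moore{\alpha}\to\cdots$, and the composite $(\alpha\smash\id)$ followed by $c\smash\id$ is zero; postcomposing with $\mu$ shows $\alpha\cdot\id_{\moore{\alpha}}=\mu\circ(c\smash\id)\circ(\alpha\smash\id)=0$ in $[\Sigma^{s,w}\moore{\alpha},\moore{\alpha}]$, i.e. the identity is annihilated by $\alpha$.

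For the converse, suppose $\alpha\cdot\id_{\moore{\alpha}}=0$. The goal is to produce $\mu$ with $\mu\circ(c\smash\id)=\id$. Apply the functor $[-,\moore{\alpha}]$ to the cofiber sequence $\Sigma^{s,w}\moore{\alpha}\xrightarrow{\alpha\smash\id}\moore{\alpha}\xrightarrow{c\smash\id}\moore{\alpha}\smash\moore{\alpha}\xrightarrow{d\smash\id}\Sigma^{s+1,w}\moore{\alpha}$, obtaining the exact sequence
\[
[\Sigma^{s,w}\moore{\alpha},\moore{\alpha}]\xleftarrow{(\alpha\smash\id)^{\ast}}[\moore{\alpha},\moore{\alpha}]\xleftarrow{(c\smash\id)^{\ast}}[\moore{\alpha}\smash\moore{\alpha},\moore{\alpha}].
\]
The class $\id_{\moore{\alpha}}\in[\moore{\alpha},\moore{\alpha}]$ is sent by $(\alpha\smash\id)^{\ast}$ to $\id\circ(\alpha\smash\id)=\alpha\cdot\id_{\moore{\alpha}}$, which vanishes by hypothesis; hence $\id_{\moore{\alpha}}$ lies in the image of $(c\smash\id)^{\ast}$, so there is some $\mu\colon\moore{\alpha}\smash\moore{\alpha}\to\moore{\alpha}$ with $\mu\circ(c\smash\id)=\id_{\moore{\alpha}}$. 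That is precisely a left unital pairing.

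The only subtle point — and the one I would spell out carefully — is the identification of the relevant mapping group: one needs $(\alpha\smash\id_{\moore{\alpha}})^{\ast}$ on $[\moore{\alpha},\moore{\alpha}]$ to agree with "multiplication by $\alpha$ on $\id_{\moore{\alpha}}$" in the sense meant by the statement, and that the smash-product cofiber sequence $\moore{\alpha}\smash(\Sigma^{s,w}\unit\xrightarrow{\alpha}\unit\xrightarrow{c}\moore{\alpha})$ really is a homotopy cofiber sequence with the maps $\alpha\smash\id$, $c\smash\id$, $d\smash\id$ — this is standard since $-\smash\moore{\alpha}$ is an exact (triangulated) endofunctor of $\SH(S)$, but it is worth recording. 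No convergence or computational input beyond this is required; the argument is formal. I would therefore present it as a short two-direction argument, flagging that "left unital" is used in the precise sense $\mu\circ(c\smash\id)=\id$, and that the analogous statement with a right unit (or two-sided unit, at the cost of additional hypotheses) can be treated the same way.
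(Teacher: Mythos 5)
Your argument is correct and is essentially the paper's own proof: both reduce the existence of a left unital pairing to the identity $\id_{\moore{\alpha}}$ lying in the image of $(c\smash \moore{\alpha})^\ast$, and read this off from the long exact sequence obtained by smashing the defining cofiber sequence with $\moore{\alpha}$ and applying $[-,\moore{\alpha}]$, where precomposition with $\alpha\smash\id$ is multiplication by $\alpha$. The only difference is presentational: you spell out the two directions separately, while the paper phrases it as ``$c\smash\moore{\alpha}$ admits a retraction'' and cites the exact sequence once.
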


\begin{proof}
  Consider the following diagram:
  \[ \xymatrix{ \unit \smash \unit = \unit \ar[d]_\id \ar[r]^c & 
    \moore{\alpha}=\unit\smash \moore{\alpha} \ar[r]^{c\smash \moore{\alpha}} \ar[d]^\id& \moore{\alpha}\smash \moore{\alpha} \\
    \unit \ar[r]^c &\moore{\alpha}\ar[r]^\id & \moore{\alpha}}\]
  There exists a left unital pairing 
  \[ \moore{\alpha}\smash \moore{\alpha}\to\moore{\alpha}\]
  if and only if the map
  $c\smash \moore{\alpha}$ admits a retraction. After smashing with $\moore{\alpha}$, 
  the homotopy cofiber sequence
  \[ \Sigma^{s,w}\unit\xrightarrow{\alpha}\unit \xrightarrow{c} \moore{\alpha} \xrightarrow{d}\Sigma^{s+1,w} \unit\]
  induces a long exact sequence
  \[ \dotsm \xleftarrow{d^\ast} [\Sigma^{s,w}\moore{\alpha},\moore{\alpha}] 
  \xleftarrow{\alpha} [\moore{\alpha},\moore{\alpha}] 
  \xleftarrow{c^\ast} [\moore{\alpha}\smash \moore{\alpha},\moore{\alpha}] 
  \xleftarrow{d^\ast} [\Sigma^{s+1,w}\moore{\alpha},\moore{\alpha}] 
  \xleftarrow{\alpha} \dotsm \]
  which shows the desired statement.
\end{proof}

If a left unital pairing 
$\mu\colon \moore{\alpha}\smash \moore{\alpha}\to \moore{\alpha}$
exists, then $\mu\circ (\moore{\alpha}\smash c)\circ c =c$.
The short exact sequence
\[ 0\to \pi_{s+1,w}\moore{\alpha}
\xrightarrow{d^\ast} [\moore{\alpha},\moore{\alpha}]
\xrightarrow{c^\ast} \pi_{0,0} \moore{\alpha} \to 0\]
then supplies an element $\psi\in \pi_{s+1,w}\moore{\alpha}$ with
$\psi\circ d = \mu\circ (\moore{\alpha}\smash c) -\id_{\moore{\alpha}}$.
The short exact sequence
\[ 0\to \pi_{2s+2,2w}\moore{\alpha}
\xrightarrow{d^\ast} [\Sigma^{s+1,w}\moore{\alpha},\moore{\alpha}]
\xrightarrow{c^\ast} \pi_{s+1,w} \moore{\alpha} \to 0\]
provides $\theta\colon \Sigma^{s,w}\moore{\alpha}\to\moore{\alpha}$
with $\theta \circ \Sigma^{s+1,w}d=\psi$. It follows that the 
left unital pairing $\mu-\theta\circ (d\smash \moore{\alpha})$ 
is also right unital, because
\[ \bigl(\mu-\theta\circ (d\smash \moore{\alpha})\bigr)\circ 
(\moore{\alpha}\smash c) = 
\mu\circ (\moore{\alpha}\smash c) - \theta\circ (d\smash c) = 
\mu\circ (\moore{\alpha}\smash c) -\theta\circ \Sigma^{s+1,w}c\circ d 
=\id_{\moore{\alpha}}.\] Hence if a left unital pairing on
$\moore{\alpha}$ exists, a unital pairing exists as well. 
In the following, ``multiplication'' stands for ``unital pairing''.

\begin{lemma}\label{lem:order-id-moore}
  Let $\alpha\colon \Sigma^{s,w}\unit\to \unit$ be an endomorphism.  
  Then $\alpha^{2}\cdot \id_{\moore{\alpha}}=0$. 
\end{lemma}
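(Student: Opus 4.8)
The plan is to work entirely inside the defining homotopy cofiber sequence
\[ \Sigma^{s,w}\unit\xrightarrow{\alpha}\unit\xrightarrow{c}\moore{\alpha}\xrightarrow{d}\Sigma^{s+1,w}\unit, \]
using only bifunctoriality of $\smash$ and the triangle axioms; no information about stable homotopy groups of spheres enters. Abbreviate $\alpha_{\moore{\alpha}}:=\alpha\smash\id_{\moore{\alpha}}\colon\Sigma^{s,w}\moore{\alpha}\to\moore{\alpha}$, the self-map realizing $\alpha\cdot\id_{\moore{\alpha}}$, so that $\alpha^{2}\cdot\id_{\moore{\alpha}}=\alpha_{\moore{\alpha}}\circ\Sigma^{s,w}\alpha_{\moore{\alpha}}$. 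The guiding idea is that $\alpha_{\moore{\alpha}}$, although possibly nonzero, is forced to factor through $d$, and that composing $d$ with a suspension of $\alpha$ vanishes by the triangle axioms.

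First I would note that $\alpha_{\moore{\alpha}}\circ\Sigma^{s,w}c=\alpha\smash c=c\circ\alpha=0$, so that applying $[-,\moore{\alpha}]$ to the shifted distinguished triangle $\Sigma^{s,w}\unit\xrightarrow{\Sigma^{s,w}c}\Sigma^{s,w}\moore{\alpha}\xrightarrow{\Sigma^{s,w}d}\Sigma^{2s+1,2w}\unit$ produces an element $\bar\alpha\in\pi_{2s+1,2w}\moore{\alpha}$ with $\alpha_{\moore{\alpha}}=\bar\alpha\circ\Sigma^{s,w}d$. Next I would record the formal identity $d\circ\alpha_{\moore{\alpha}}=\alpha\smash d=(\Sigma^{s+1,w}\alpha)\circ(\Sigma^{s,w}d)$, where $\Sigma^{s+1,w}\alpha\colon\Sigma^{2s+1,2w}\unit\to\Sigma^{s+1,w}\unit$ is the suspension of $\alpha$ and $\Sigma^{s,w}d\colon\Sigma^{s,w}\moore{\alpha}\to\Sigma^{2s+1,2w}\unit$. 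Substituting the factorization into the left-hand copy of $\alpha_{\moore{\alpha}}$ and then this identity gives
\[ \alpha^{2}\cdot\id_{\moore{\alpha}}=\bar\alpha\circ\Sigma^{s,w}\bigl(d\circ\alpha_{\moore{\alpha}}\bigr)=\bar\alpha\circ(\Sigma^{2s+1,2w}\alpha)\circ(\Sigma^{2s,2w}d). \]

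To finish, rotate the defining triangle: this yields the distinguished triangle $\unit\xrightarrow{c}\moore{\alpha}\xrightarrow{d}\Sigma^{s+1,w}\unit\xrightarrow{-\Sigma^{1,0}\alpha}\Sigma^{1,0}\unit$, in which consecutive maps compose to zero, so $(\Sigma^{1,0}\alpha)\circ d=0$. Applying the shift functor $\Sigma^{2s,2w}$ gives $(\Sigma^{2s+1,2w}\alpha)\circ(\Sigma^{2s,2w}d)=\Sigma^{2s,2w}\bigl((\Sigma^{1,0}\alpha)\circ d\bigr)=0$, whence $\alpha^{2}\cdot\id_{\moore{\alpha}}=\bar\alpha\circ 0=0$.

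The only place to slip is the bookkeeping in the two formal identities $\alpha_{\moore{\alpha}}\circ\Sigma^{s,w}c=c\circ\alpha$ and $d\circ\alpha_{\moore{\alpha}}=(\Sigma^{s+1,w}\alpha)\circ(\Sigma^{s,w}d)$ -- one must keep track of the bidegrees and of which smash factor carries $\alpha$ -- together with making sure that rotating the defining triangle genuinely exhibits a shift of $\alpha$ as the map following $d$, so that $(\Sigma^{2s+1,2w}\alpha)\circ(\Sigma^{2s,2w}d)$ can be recognized as a $\Sigma^{2s,2w}$-shift of a composite of consecutive maps in a distinguished triangle. I expect no conceptual obstacle beyond that.
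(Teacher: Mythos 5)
Your argument is correct and is essentially the paper's own proof in diagrammatic form: the paper uses the exact sequence induced by the defining triangle to write $\alpha\cdot\id_{\moore{\alpha}}=d^\ast(x)$ and then observes this dies after multiplying by $\alpha$, which is exactly your factorization $\alpha\smash\id_{\moore{\alpha}}=\bar\alpha\circ\Sigma^{s,w}d$ followed by the vanishing $(\Sigma^{1,0}\alpha)\circ d=0$ from the rotated triangle. The bidegree and sign bookkeeping you flag is harmless, since any discrepancy is a unit multiple and the element being shown to vanish is annihilated regardless.
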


\begin{proof}
  The homotopy cofiber sequence defining $\moore{\alpha}$ 
  induces a short exact sequence
  \begin{equation}\label{eq:id-moore} 
    0\to [ \Sigma^{s+1,w}\unit,\moore{\alpha}]/\alpha
    \xrightarrow{d^\ast} [\moore{\alpha},\moore{\alpha}]
    \xrightarrow{c^\ast} {}_{\alpha}[\unit,\moore{\alpha}] =[\unit,\moore{\alpha}] = [\unit,\unit]/\alpha \to 0 
  \end{equation}
  for which Proposition~\ref{prop:homotopy-groups-moore} implies
  the identification of the outer terms.
  In particular, $c^\ast(\id_{\moore{\alpha}})$ is annihilated by $\alpha$. Hence
  $\alpha\cdot \id_{\moore{\alpha}}=d^\ast(x)$ for some 
  $x\in [\Sigma^{s+1,w}\unit,\moore{\alpha}]/\alpha$. Thus $\alpha\cdot x=0$,
  which proves the equality 
  $\alpha^{2}\cdot \id_{\moore{\alpha}}=\alpha\cdot d^\ast(x)=d^\ast(\alpha\cdot x)=0$.
\end{proof}

As a consequence, there exists a pairing
\[ \moore{\alpha}\smash \moore{\alpha^{2}}\to \moore{\alpha} \]
which in the classical case of the topological sphere
spectrum and $\alpha=2$ was described by Oka in \cite{oka.mult-moore}.
For example, using $\hyper^2=2\hyper$, there results a pairing
\[ \moore{\hyper}\smash \moore{2\hyper}\to \moore{\hyper} \]
which realizes to Oka's pairing for subfields of the complex
numbers. The specific role that squares play will be clarified
by the following statement lifting a theorem of Brayton Gray 
\cite[Theorem 10]{gray.ring-aarhus}.

\begin{theorem}\label{thm:mult-moore-square}
  Let $\alpha\colon \Sigma^{s,w}\unit\to \unit$ be any endomorphism.
  Then $\moore{\alpha^2}$ admits a multiplication.
\end{theorem}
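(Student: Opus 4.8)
The plan is to reduce the theorem to the single statement
\[ \alpha^{2}\cdot\id_{\moore{\alpha^{2}}}=0, \]
which by Lemma~\ref{lem:mult-order-identity} together with the discussion following it (a left unital pairing can be upgraded to a two-sided one) implies that $\moore{\alpha^{2}}$ admits a multiplication. Note that Lemma~\ref{lem:order-id-moore} applied to the endomorphism $\alpha^{2}$ itself only yields $\alpha^{4}\cdot\id_{\moore{\alpha^{2}}}=0$, so the content is the improvement from the fourth to the second power, and this is where being a square is used. To exploit the factorization $\alpha^{2}=\alpha\circ\Sigma^{s,w}\alpha$, I would apply the octahedral axiom to the composable pair $\Sigma^{2s,2w}\unit\xrightarrow{\Sigma^{s,w}\alpha}\Sigma^{s,w}\unit\xrightarrow{\alpha}\unit$. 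Writing $c_{j}\colon\unit\to\moore{\alpha^{j}}$ and $d_{j}\colon\moore{\alpha^{j}}\to\Sigma^{js+1,jw}\unit$ for the structure maps, this produces a homotopy cofiber sequence
\[ \Sigma^{s,w}\moore{\alpha}\xrightarrow{i}\moore{\alpha^{2}}\xrightarrow{p}\moore{\alpha}\xrightarrow{\partial}\Sigma^{s+1,w}\moore{\alpha}, \]
realizing $\moore{\alpha^{2}}$ as a two-stage tower with associated graded pieces $\Sigma^{s,w}\moore{\alpha}$ and $\moore{\alpha}$, together with the compatibilities $p\circ c_{2}=c_{1}$, $i\circ\Sigma^{s,w}c_{1}=c_{2}\circ\alpha$, $d_{2}\circ i=\Sigma^{s,w}d_{1}$, and $d_{1}\circ p=\Sigma^{s+1,w}\alpha\circ d_{2}$, all up to sign.

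Set $\sigma:=\alpha\cdot\id_{\moore{\alpha^{2}}}\colon\Sigma^{s,w}\moore{\alpha^{2}}\to\moore{\alpha^{2}}$, so that $\alpha^{2}\cdot\id_{\moore{\alpha^{2}}}=\sigma\circ\Sigma^{s,w}\sigma$. Lemma~\ref{lem:order-id-moore} applied to $\moore{\alpha}$ gives $\alpha^{2}\cdot\id_{\moore{\alpha}}=0$; combined with $c_{1}\circ\alpha=0$, the regraded version of the exact sequence~(\ref{eq:id-moore}) writes $\alpha\cdot\id_{\moore{\alpha}}=\bar{x}_{1}\circ\Sigma^{s,w}d_{1}$ for a coextension $\bar{x}_{1}\in\pi_{2s+1,2w}\moore{\alpha}$ of $\alpha\cdot\id_{\moore{\alpha}}$ along $\alpha$, and — the crucial point — within its indeterminacy one can choose $\bar{x}_{1}$ to factor as $\bar{x}_{1}=c_{1}\circ\bar{x}_{1}'$ with $\bar{x}_{1}'\in\pi_{2s+1,2w}\unit$. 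The reason this helps is that $c_{1}$, regarded as an element of the $\KMW$-module $[\unit,\moore{\alpha}]$, is annihilated by $\alpha$ because $c_{1}\circ\alpha=0$; hence, using that the $\KMW$-action comes from a graded-commutative ring, $\alpha\cdot\bar{x}_{1}=\bar{x}_{1}'\cdot(\alpha\cdot c_{1})=0$. Using $d_{1}\circ p=\Sigma^{s+1,w}\alpha\circ d_{2}$ one then computes
\[ p\circ\sigma=\alpha\cdot p=(\alpha\cdot\id_{\moore{\alpha}})\circ\Sigma^{s,w}p=\bar{x}_{1}\circ\Sigma^{2s+1,2w}\alpha\circ\Sigma^{s,w}d_{2}=(\alpha\cdot\bar{x}_{1})\circ\Sigma^{s,w}d_{2}=0. \]
By the cofiber sequence above, $\sigma$ therefore factors as $\sigma=i\circ\tau$ for some $\tau\colon\Sigma^{s,w}\moore{\alpha^{2}}\to\Sigma^{s,w}\moore{\alpha}$, and $d_{2}\circ i=\Sigma^{s,w}d_{1}$ forces $\Sigma^{s,w}d_{1}\circ\tau=d_{2}\circ\sigma=\alpha\cdot d_{2}$.

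It then remains to run the computation through the tower. Using $\sigma\circ\Sigma^{s,w}i=\alpha\cdot i=i\circ\Sigma^{s,w}(\alpha\cdot\id_{\moore{\alpha}})$ together with $\sigma=i\circ\tau$,
\[ \alpha^{2}\cdot\id_{\moore{\alpha^{2}}}=\sigma\circ\Sigma^{s,w}(i\circ\tau)=(\sigma\circ\Sigma^{s,w}i)\circ\Sigma^{s,w}\tau=i\circ\Sigma^{s,w}\bigl((\alpha\cdot\id_{\moore{\alpha}})\circ\tau\bigr). \]
Now $(\alpha\cdot\id_{\moore{\alpha}})\circ\tau=\bar{x}_{1}\circ(\Sigma^{s,w}d_{1}\circ\tau)=\bar{x}_{1}\circ(\alpha\cdot d_{2})=(\alpha\cdot\bar{x}_{1})\circ\Sigma^{s,w}d_{2}=0$ by the same torsion argument, so $\alpha^{2}\cdot\id_{\moore{\alpha^{2}}}=i\circ\Sigma^{s,w}(0)=0$, and the theorem follows.

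The step I expect to be the main obstacle is $p\circ\sigma=0$, namely gaining enough control over the coextension $\bar{x}_{1}$: one must arrange, within its indeterminacy (a coset of $\alpha\cdot\pi_{s+1,w}\moore{\alpha}$), that $\bar{x}_{1}$ is represented in the image of $c_{1}$, equivalently that $\alpha\cdot\bar{x}_{1}$ is killed by $\alpha$. This is precisely where squaring forces the relevant collapse; by contrast, everything downstream of it is a formal diagram chase in the triangulated category, organized along the two-stage tower coming from the octahedron.
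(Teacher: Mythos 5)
Your overall architecture is sound and genuinely different from the paper's: you reduce, as the paper does, to showing $\alpha^{2}\cdot\id_{\moore{\alpha^{2}}}=0$ via Lemma~\ref{lem:mult-order-identity}, but then you run the argument through the octahedral two-cell filtration $\Sigma^{s,w}\moore{\alpha}\to\moore{\alpha^{2}}\to\moore{\alpha}$, whereas the paper works with the set $R(\alpha)$ of coextensions of $\alpha\smash\moore{\alpha}$ through the smash square $\E\smash\E$ and uses the multiplicativity $R(\alpha)\alpha^{2}\subset R(\alpha\smash\alpha)$. Your downstream chase (that $p\circ\sigma=0$ forces $\sigma=i\circ\tau$ with $\Sigma^{s,w}d_{1}\circ\tau=\alpha\cdot d_{2}$, and then $\sigma\circ\Sigma^{s,w}\sigma=i\circ\Sigma^{s,w}\bigl((\alpha\cdot\id_{\moore{\alpha}})\circ\tau\bigr)=0$) is correct up to signs and $\varepsilon$-units, which are harmless for a vanishing statement. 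The gap is that everything hinges on the single assertion that the coextension $\bar{x}_{1}$ of $\alpha\cdot\id_{\moore{\alpha}}$ along $\Sigma^{s,w}d_{1}$ can be chosen in the image of $c_{1\ast}$ (equivalently with $d_{1}\circ\bar{x}_{1}=0$, which is what yields $\alpha\cdot\bar{x}_{1}=0$ via $c_{1}\circ\alpha=0$), and you never prove it --- you explicitly flag it as ``the main obstacle'' and leave it open. That assertion is exactly the nontrivial content of the paper's proof: it is the statement that $R(\alpha)$ is nonempty, which the paper establishes by producing $\gamma$ with $\gamma\circ\Sigma^{s,w}d=\alpha\smash\moore{\alpha}$ and then correcting $\gamma$ within its indeterminacy by an element of the form $\psi\circ\alpha$ so that $d\circ(\gamma-\psi\circ\alpha)=0$. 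Deferring this step means the proposal is incomplete precisely where the theorem's weight is carried; the rest, in both your version and the paper's, is formal.

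Two further inaccuracies in how you frame that missing step. First, it is not ``where squaring forces the relevant collapse'': the coextension statement concerns $\moore{\alpha}$ for an arbitrary $\alpha$ and (per the paper) holds with no squaring hypothesis; the square enters elsewhere --- in the paper through $R(\alpha)\alpha^{2}\subset R(\alpha\smash\alpha)$, in your setup through the two-cell filtration together with $\alpha^{2}\cdot\id_{\moore{\alpha^{2}}}=\sigma\circ\Sigma^{s,w}\sigma$. Second, your parenthetical ``equivalently that $\alpha\cdot\bar{x}_{1}$ is killed by $\alpha$'' is not an equivalence and would not suffice: the condition $\alpha^{2}\cdot\bar{x}_{1}=0$ is essentially automatic (in the spirit of Lemma~\ref{lem:order-id-moore}), whereas your computation of $p\circ\sigma$ and of $(\alpha\cdot\id_{\moore{\alpha}})\circ\tau$ twice needs the on-the-nose vanishing $\alpha\cdot\bar{x}_{1}=0$. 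To complete your route you would need to import the paper's nonemptiness argument (or an equivalent) for the choice of $\bar{x}_{1}$; with that supplied, your octahedral chase does give an alternative organization of the proof.
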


\begin{proof}
  Lemma~\ref{lem:mult-order-identity} implies it suffices
  to show that $\alpha^2\cdot \id_{\moore{\alpha^2}}=0$. 
  In order to prove this, consider the following
  general construction for $\alpha\colon \D\to \E$,
  modeled on \cite{toda.composition}, and observing
  that the $\varepsilon$-graded ring structure on
  $\pi_{\ast+\bideg}\unit$ may be equally defined via
  composition or via smash product.
  Let $R(\alpha)$ be the set of all maps
  $A\colon \Sigma^{1,0}\D\smash \D\to \E\smash \E$ such
  the diagram
  \[ \begin{tikzcd}
    \Sigma^{1,0}\D\smash \D\ar[r,"A"] &  \E\smash \E \ar[d,"\E\smash c"] \\
    \D\smash \moore{\alpha} \ar[r,"\alpha\smash \moore{\alpha}"] 
    \ar[u,"\D\smash d"] & \E\smash \moore{\alpha}
  \end{tikzcd} \]
  commutes. 
  If $R(\alpha)$ contains an element
  of the form $\beta\smash \alpha$, where $\beta\colon \Sigma^{1,0}\D\to \E$,
  then $\alpha\smash \moore{\alpha}=\alpha\cdot \id_{\moore{\alpha}}$ 
  is the zero map. Several natural commutative diagrams show that if
  $\alpha\colon \D\to \E$ and $\beta \colon \mathsf{F}\to \mathsf{G}$
  are two maps, then for every $A\in R(\alpha)$, the element
  \[ \Sigma^{1,0}\D\smash \mathsf{F}\smash \D\smash \mathsf{F}
  \xrightarrow{\mathrm{twist}} 
  \Sigma^{1,0} \D\smash \D\smash \mathsf{F}\smash \mathsf{F}
  \xrightarrow{A\smash \beta\smash\beta} 
  \E\smash \E\smash \mathsf{G}\smash \mathsf{G}
  \xrightarrow{\mathrm{twist}} 
  \E\smash \mathsf{G}\smash \E \smash \mathsf{G} \]
  lies in $R(\alpha\smash \beta)$. In particular, 
  for $\alpha\in \pi_{s,w}\unit$ and $\beta\in \pi_{t,x}\unit$,
  the inclusion 
  $(-1)^{(s-w)(t-x)}\varepsilon^{wx}R(\alpha)\beta^2\subset R(\alpha\smash \beta)$
  holds, using \cite[Equation (2.4)]{dugger-isaksen.hopf}. 
  In the special case $\alpha=\beta$, one obtains the inclusion
  $(-1)^{(s-w)^2}\varepsilon^{w^2}R(\alpha)\alpha^2\subset R(\alpha\smash \alpha)$.
  If $R(\alpha)$ is not empty, this implies that 
  $\alpha^2\smash \moore{\alpha^2}$ is zero. It remains to see that $R(\alpha)$
  is not empty for $\alpha\in \pi_{s,w}\unit$. Since $\alpha\smash \moore{\alpha}
  \circ \Sigma^{s,w}c=c\circ \Sigma^{s,w}\alpha=0$, there exists
  $\gamma\in \pi_{2s+1,2w}\moore{\alpha}$ with 
  $\gamma\circ \Sigma^{s,w}d=\alpha\smash \moore{\alpha}$. 
  If $d\circ \gamma=0$, then $R(\alpha)$ contains a lift of $\gamma$
  along $c$. If $d\circ \gamma\neq 0$, then 
  the equation 
  \[ d\circ \gamma \circ \Sigma^{s,w}d = d\circ \alpha\smash \moore{\alpha} =
  \Sigma^{s+1,w}\alpha\circ \Sigma^{s,w}d=0\]
  supplies $\phi\in \pi_{0,0}\unit$ with $\phi \circ \alpha=d\circ \gamma$. 
  Since $\alpha\circ\phi\circ \alpha = 0$, there exists 
  $\psi\in \pi_{s+1,w}\moore{\alpha}$ with $d\circ \psi \circ \alpha =\phi\circ \alpha$
  It follows that $(\gamma-\psi\circ \alpha)\circ d = \gamma\circ d = \alpha\smash
  \moore{\alpha}$ and $d\circ (\gamma - \psi\circ \alpha)=0$. A lift
  of $\gamma-\psi\circ \alpha\in \pi_{2s+1,2w}\moore{\alpha}$ along $c$ provides
  an element in $R(\alpha)$, which concludes the proof. 
\end{proof}

Now for some negative results regarding multiplications on 
motivic Moore spectra. Recall that $\pi_{0,0}\unit = GW$ identifies
with the Grothendieck-Witt ring of quadratic forms by 
Theorem~\ref{thm:zeroline}, which comes equipped with a dimension
ring homomorphism $\dim \colon \GW \to \ZZ$.

\begin{theorem}\label{thm:moore-forms-nomult}
  Let $\alpha\colon \unit \to \unit$ be an endomorphism
  with $\dim(\alpha)\equiv 2(4)$. Then
  $\moore{\alpha}$ does not admit a multiplication.
\end{theorem}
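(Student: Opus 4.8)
The plan is to use Lemma~\ref{lem:mult-order-identity}: it suffices to show that if $\dim(\alpha)\equiv 2\pmod 4$, then the identity on $\moore{\alpha}$ is \emph{not} annihilated by $\alpha$, i.e. $\alpha\cdot \id_{\moore{\alpha}}\neq 0$ in $[\moore{\alpha},\moore{\alpha}]$. Since $\alpha\colon\unit\to\unit$ lives in weight zero, $\alpha$ lies in $\GW(F)=\KMW_0(F)$, and $\dim(\alpha)\equiv 2\pmod 4$ means $\alpha\equiv \hyper\pmod{\text{(lower-dimensional correction)}}$; more precisely $\dim$ is even, so $\alpha = m\hyper + (\text{torsion-free part of dimension }0)$ with the dimension-$0$ part being (a multiple of) classes $\langle u\rangle - 1 = \eta[u]$, hence $\alpha = m\hyper + \eta\beta$ for some $\beta\in\KMW_1(F)$ with $m$ odd (so that $2m\equiv 2\pmod 4$). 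The idea is to detect $\alpha\cdot\id_{\moore{\alpha}}$ via its effect on $\pi_{1,0}$, using that $\pi_{1,0}\unit$ carries $24$-torsion generated by $\nu$ that survives under quotienting.

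Concretely, I would run the short exact sequence~\eqref{eq:id-moore}: $c^\ast(\id_{\moore\alpha})$ is annihilated by $\alpha$, so $\alpha\cdot\id_{\moore\alpha}=d^\ast(x)$ for a unique $x\in[\Sigma^{1,0}\unit,\moore\alpha]/\alpha = \pi_{1,0}\moore\alpha/\alpha$. By Proposition~\ref{prop:homotopy-groups-moore} and the identification $\pi_{0,0}\unit=\GW$ from Theorem~\ref{thm:zeroline}, the class $x$ is determined by a preferred generator of the relevant torsion: one checks, using the construction of the connecting map, that $x$ maps under $c_\ast$ to (the class of) $\delta\cdot\nu$ where $\delta\in\KMW_1/\!\sim$ records the ``derivative'' of $\alpha$ in the Milnor–Witt filtration. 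The key input is the $\GW$-action on $\pi_{1+\bideg}\unit$ from Lemma~\ref{lem:action-forms-pi1}: since $\hyper\cdot\nu = 2\nu$ and $\nu$ has order $24$ in $\pi_{3,2}\unit$, multiplication by $m\hyper$ (with $m$ odd) acts on the $\nu$-summand as multiplication by $2m$, which is a \emph{unit times }$2\pmod{24}$, hence nonzero and in fact of order $12$. This non-vanishing of the $\hyper$-action on $\nu$-torsion is exactly what obstructs $\alpha\cdot\id_{\moore\alpha}=0$, and it is governed by $\dim(\alpha)\bmod 4$ because $\hyper\cdot\nu=2\nu$ means $2n\hyper$ acts as $4n$, which vanishes mod... no: precisely, $n\hyper\cdot\nu = 2n\nu$, zero in $\ZZ/24$ iff $12\mid n$, so for $\dim=2n$ with $n$ odd it is nonzero.

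The main obstacle will be making rigorous the identification of the class $x\in\pi_{1,0}\moore\alpha/\alpha$ with (a nonzero multiple of) $\nu$: this requires tracking the boundary map $d$ through the cofiber sequence and relating $\alpha\cdot\id_{\moore\alpha}$ to the composite $\moore\alpha\xrightarrow{d}\Sigma^{1,0}\unit\xrightarrow{?}\moore\alpha$, then pairing against $\pi_{1,0}$. I would reduce to the universal case $\alpha=\hyper$ by the constant-presheaf functor $\const$ and naturality (the $\moore{n}$, and $\moore{\hyper}$ after base change, come from topology), so that the computation $2\cdot\nu_\Top\neq 0$ in $\pi_3\moore{2}$ — equivalently the classical fact $\pi_3(\sphere/2)$ detects $\nu$ and $2$ acts nontrivially via the $\beta$-operation / the fact that $\id_{\sphere/2}$ has order $4$ — does the work; then transport back along $\const$ and twist by the $\eta\beta$ correction, which only changes $\alpha\cdot\id$ by an $\eta$-multiple that dies against $\nu$ since $\eta\nu=0$ (Theorem~\ref{thm:twoline}). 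The upshot: $\alpha\cdot\id_{\moore\alpha}$ maps to $2m\cdot\nu\neq 0$ in $\pi_{3,2}\moore\alpha/\alpha$, so $\alpha\cdot\id_{\moore\alpha}\neq 0$, and Lemma~\ref{lem:mult-order-identity} finishes the proof.
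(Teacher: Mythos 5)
Your overall strategy (show $\alpha\cdot\id_{\moore{\alpha}}\neq 0$ and invoke Lemma~\ref{lem:mult-order-identity}) is the right first step, but the detection mechanism you propose does not work, and this is a genuine gap rather than a technicality. The computation $\alpha\cdot\nu=\dim(\alpha)\nu=2m\nu\neq 0$ in $\pi_{3,2}\unit\cong\ZZ/24$ cannot obstruct $\alpha\cdot\id_{\moore{\alpha}}=0$: by Proposition~\ref{prop:homotopy-groups-moore} the kernel of $c_\ast$ is exactly $\alpha\pi_{3,2}\unit$, so $\alpha\cdot c_\ast(\nu)=c_\ast(\alpha\nu)=0$ holds automatically, whatever $\alpha$ is. Non-vanishing of $\alpha\cdot\id_{\moore{\alpha}}$ can only be seen on classes $\tilde{\beta}$ with $d_\ast\tilde{\beta}=\beta\neq 0$, and by Proposition~\ref{prop:toda} this is governed by a Toda bracket $\langle\alpha,\beta,\alpha\rangle$, which requires $\alpha\beta=0$; your own observation $\alpha\nu\neq 0$ shows that $\nu$ is precisely \emph{not} available as such a $\beta$ when $\dim(\alpha)\equiv 2\ (4)$ (contrast Example~\ref{ex:toda-reals}, where $\dim(2+11\hyper)=24$ kills $\nu$). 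Moreover, the classical fact you appeal to, that $\id_{\sphere/2}$ has order $4$, is Barratt's $\pi_2\sphere/2\cong\ZZ/4$ and is an $\eta_\Top$-phenomenon, not a $\nu$-phenomenon; your identification of the lift $x$ with a class ``$\delta\cdot\nu$'' recording a derivative of $\alpha$ is unsubstantiated and points at the wrong element. Finally, the proposed reduction ``to the universal case via $\const$, then twist by $\eta\beta$'' is not available: $\hyper$, and a general even-dimensional $\alpha=m\hyper+\eta\beta$, is not in the image of $\const$ over a general field, and changing $\alpha$ by $\eta\beta$ changes the Moore spectrum itself, with no comparison map supplied. (Base change to a quadratic closure would reduce to $\moore{n}$, but complex realization then only settles fields embeddable in $\CC$.)

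The paper's proof takes an entirely different, cohomological route that sidesteps all of these issues and works over any field: since $\dim(\alpha)\equiv 2\ (4)$, one has $\Sq^1(x_0)=x_1$ in $h^{\ast,\ast}(\moore{\alpha})$, and the Cartan formula gives $\Sq^2(x_0\tensor y_0)=\tau\, x_1\tensor y_1\neq 0$ in $h^{\ast,\ast}(\moore{\alpha}\smash\moore{\alpha})$; on the other hand $\alpha\cdot\id_{\moore{\alpha}}=0$ would split $\moore{\alpha}\smash\moore{\alpha}\simeq\moore{\alpha}\vee\Sigma^{1,0}\moore{\alpha}$ as modules over the motivic Steenrod algebra, forcing $\Sq^2(x_0\tensor y_0)=0$. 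If you want to salvage a homotopy-theoretic argument, you would have to locate the obstruction in weight zero (an odd multiple of $\eta_\Top$, as in topology) and control the relevant Toda bracket and its indeterminacy using Lemma~\ref{lem:action-forms-pi1}; note that already $\hyper\cdot\eta_\Top=\rho\eta\eta_\Top$ is nonzero over formally real fields, so even setting up the bracket $\langle\alpha,\eta_\Top,\alpha\rangle$ is problematic for general $\alpha$ of even dimension.
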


\begin{proof}
  This result follows by complex realization from \cite{oka.mult-moore}
  for subfields of the complex numbers. In any case, consider 
  motivic cohomology $h^{\ast,\ast}$
  with coefficients in $\FF_2$. Since multiplication
  with $\alpha$ induces multiplication with $\dim(\alpha)$ on 
  motivic cohomology,
  there results a split short exact sequence
  \[ 0 \to h^{\ast,\ast}\to h^{\ast,\ast}(\moore{\alpha}) \to h^{\ast +1,\ast} \to 0\]
  on motivic cohomology. Pick basis elements $x_0\in h^{0,0}(\moore{\alpha}),
  x_1\in h^{1,0}(\moore{\alpha})$, and note that $\Sq^1(x_0)=x_1$
  since $\dim(\alpha)\equiv 2(4)$. 
  The split short exact sequence 
  \[ 0 \to h^{\ast,\ast}(\moore{\alpha}) \to h^{\ast,\ast}(\moore{\alpha}\smash \moore{\alpha})
  \to h^{\ast+1,\ast}(\moore{\alpha}) \to 0\]
  induced by multiplication with $\alpha$ on $\moore{\alpha}$
  then supplies $h^{\ast,\ast}(\moore{\alpha}\smash \moore{\alpha})$
  with basis elements 
  $x_0\tensor y_0,x_1\tensor y_0,x_0\tensor y_1,x_1\tensor y_1$.
  Here $y_0\in h^{0,0}(\moore{\alpha})$ and 
  $y_1\in h^{1,0}(\moore{\alpha})$ are basis elements in 
  the other factor of the smash product
  $\moore{\alpha}\smash \moore{\alpha}$. A K\"unneth theorem 
  supplies the basis elements for the motivic cohomology
  of the smash product.
  The Cartan formula
  \[ \Sq^2(x_0\tensor y_0) = \Sq^2(x_0)\tensor y_0 + x_0\tensor \Sq^2(y_0)
  +\tau \Sq^1(x_0)\tensor \Sq^1(y_0) = \tau x_1\tensor y_1 \]
  from \cite[Proposition 9.7]{voevodsky.reduced} then shows that $\Sq^2$ acts nontrivially
  on $h^{\ast,\ast}(\moore{\alpha}\smash \moore{\alpha})$. 
  If $\alpha\cdot \id_{\moore{\alpha}}=0$,
  then $\moore{\alpha}\smash \moore{\alpha}=\moore{\alpha}\vee \Sigma^{1,0}\moore{\alpha}$,
  and $h^{\ast,\ast}(\moore{\alpha}\smash \moore{\alpha})$ splits accordingly as a 
  module over the motivic Steenrod algebra,
  implying $\Sq^2(x_0\tensor y_0)=0$. The result follows.
\end{proof}

\begin{lemma}\label{lem:mult-moore-eta}
  The motivic Moore spectrum $\moore{\eta^\ell}$ admits a multiplication
  if and only if $\ell>1$.
\end{lemma}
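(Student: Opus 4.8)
The plan is to analyze the two directions of the biconditional separately, using the criterion from Lemma~\ref{lem:mult-order-identity}: the Moore spectrum $\moore{\eta^\ell}$ admits a multiplication if and only if $\eta^\ell\cdot\id_{\moore{\eta^\ell}}=0$. For $\ell>1$ this should follow formally: writing $\eta^\ell=\eta\cdot\eta^{\ell-1}$ and applying Theorem~\ref{thm:mult-moore-square} is not quite enough since $\ell$ need not be even, but one can instead argue directly from Lemma~\ref{lem:order-id-moore}, which gives $\eta^{2\ell}\cdot\id_{\moore{\eta^\ell}}=0$, together with the short exact sequence~(\ref{eq:id-moore}) and the structure of $\pi_{\ast+\bideg}\unit$ in the relevant bidegree. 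More precisely, $\eta^\ell\cdot\id_{\moore{\eta^\ell}}$ lies in the image of $d^\ast$ from $[\Sigma^{\ell+1,\ell}\unit,\moore{\eta^\ell}]/\eta^\ell$, and one checks that this group (or the relevant obstruction in it) vanishes for $\ell\geq 2$ using Proposition~\ref{prop:homotopy-groups-moore} and the fact that $\eta^3=0$ modulo the relations implied by $\eta^2\eta_{\Top}=12\nu$ and $\eta\nu=0$ — in particular $\eta^4=0$ already in $\KMW$, so for $\ell\geq 2$ the ambient group $\pi_{\ell,\ell}\unit\cdot$ is killed.

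For the $\ell=1$ case, i.e.\ showing $\moore{\eta}$ admits no multiplication, the strategy is to show $\eta\cdot\id_{\moore{\eta}}\neq 0$ and invoke Lemma~\ref{lem:mult-order-identity}. From~(\ref{eq:id-moore}) with $\alpha=\eta\in\pi_{1,1}\unit$, one has $c^\ast(\id_{\moore{\eta}})$ annihilated by $\eta$, so $\eta\cdot\id_{\moore{\eta}}=d^\ast(x)$ for some $x\in[\Sigma^{2,1}\unit,\moore{\eta}]/\eta=\pi_{2,1}\moore{\eta}/\eta$. The task is to identify $x$ and show it is nonzero after applying $d^\ast$, equivalently that $\eta\cdot\id_{\moore{\eta}}$ is not the zero map. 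The natural approach is via Toda brackets: the obstruction $x$ should be represented by (a lift of) an element of the Toda bracket $\langle\eta,\eta,\eta\rangle$, paralleling the classical fact that $\langle\eta_{\Top},\eta_{\Top},\eta_{\Top}\rangle$ obstructs the multiplication on the topological $\moore{\eta_{\Top}}$ — indeed classically $\langle\eta_{\Top},\eta_{\Top},\eta_{\Top}\rangle\ni\nu_{\Top}$ (mod indeterminacy), reflecting that the mod-$2$ Moore space detects $\nu$ via a secondary operation. Here one wants the motivic statement that $\langle\eta,\eta,\eta\rangle$ contains $\nu$ (up to units/indeterminacy), which can be extracted from complex realization together with Theorem~\ref{thm:pi1f1unit} controlling $\pi_{1+\bideg}\unit$; then $\eta\cdot\id_{\moore{\eta}}\neq 0$ follows from Proposition~\ref{prop:toda} or a direct diagram chase, since $\nu\notin\eta\cdot\pi_{2,2}\unit$ (as $\eta\nu=0$ but more to the point $\nu$ is not divisible by $\eta$).

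An alternative, possibly cleaner, route for $\ell=1$ avoids Toda brackets: use the mod-$2$ motivic cohomology Steenrod-operation argument exactly as in the proof of Theorem~\ref{thm:moore-forms-nomult}, but detecting the obstruction with the operation $\Sq^2$ (or the secondary operation associated to $\eta$) on $\moore{\eta}\smash\moore{\eta}$. Concretely, if $\eta\cdot\id_{\moore{\eta}}=0$ then $\moore{\eta}\smash\moore{\eta}\simeq\moore{\eta}\vee\Sigma^{2,1}\moore{\eta}$ as spectra, and one derives a contradiction by computing a Steenrod operation (the $\Sq^2$ Cartan formula, citing \cite[Proposition 9.7]{voevodsky.reduced}) on a class in $h^{\ast,\ast}(\moore{\eta}\smash\moore{\eta})$ that must vanish on a wedge but does not vanish on the genuine smash product, because $\Sq^2$ acts nontrivially relating the two summands. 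The attachment map of $\moore{\eta}$ being $\eta$ means $\Sq^2$ is the relevant primary operation detecting it, so this is the natural analogue of the $\Sq^1$-argument used for $\moore{\alpha}$ with $\dim(\alpha)\equiv 2\ (4)$.

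The main obstacle I expect is pinning down the $\ell=1$ non-existence rigorously: either making the identification of the obstruction $x$ with the Toda bracket $\langle\eta,\eta,\eta\rangle$ precise and computing that bracket motivically (the indeterminacy computation needs Theorem~\ref{thm:pi1f1unit} and Lemma~\ref{lem:action-forms-pi1}, and one must be careful that the bracket genuinely contains a class not in $\eta\cdot\pi_{\ast}\unit$), or, on the cohomological route, verifying that the relevant Steenrod operation on $\moore{\eta}\smash\moore{\eta}$ is nonzero and correctly tracking the bidegree bookkeeping through the K\"unneth isomorphism and the Cartan formula. The $\ell>1$ direction, by contrast, should be essentially formal given Lemma~\ref{lem:order-id-moore} and the vanishing $\eta^4=0$ in $\KMW$.
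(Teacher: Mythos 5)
Both directions of your proposal rest on claims that are false, so there are genuine gaps. For $\ell>1$, your key assertion that ``$\eta^3=0$ modulo the relations'' and ``$\eta^4=0$ already in $\KMW$'' is wrong: $\eta$ is \emph{not} nilpotent, since $\KMW_{-n}\iso W(F)$ for all $n\geq 1$, so $\pi_{\ell,\ell}\unit\iso W(F)\neq 0$ is certainly not ``killed'' for $\ell\geq 2$. The relations $\eta\nu=0$ and $\eta^2\eta_{\Top}=12\nu$ kill the $\eta$-multiples of $\nu$ and $\eta_{\Top}$ (i.e.\ the one-line in high weight), not powers of $\eta$ itself, and Lemma~\ref{lem:order-id-moore} only gives $\eta^{2\ell}\cdot\id=0$, which is not what is needed. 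The paper's argument instead shows that the whole group $[\Sigma^{\ell,\ell}\moore{\eta^\ell},\moore{\eta^\ell}]$ vanishes for $\ell>1$, using three specific facts: $\eta^\ell\colon\pi_{0,0}\unit\to\pi_{\ell,\ell}\unit$ is surjective (killing the right-hand term of the analogue of~(\ref{eq:id-moore})), $\eta^\ell\colon\pi_{\ell,\ell}\unit\to\pi_{2\ell,2\ell}\unit$ is an isomorphism of Witt groups (so ${}_{\eta^\ell}\pi_{\ell,\ell}\unit=0$), and $\pi_{2\ell+1,2\ell}\unit=0$ for $\ell>1$ by the one-line computation (Theorem~\ref{thm:oneline}); also note the obstruction group is $\pi_{2\ell+1,2\ell}\moore{\eta^\ell}$, not $\pi_{\ell+1,\ell}\moore{\eta^\ell}/\eta^\ell$ as you wrote. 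None of these steps can be replaced by a nilpotence argument, because the nilpotence fails.

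For $\ell=1$, your primary route is based on the Toda bracket $\langle\eta,\eta,\eta\rangle$, which is not defined: $\eta^2\neq 0$, motivically (it generates a copy of $W(F)$ in $\pi_{2,2}\unit$) and also classically, so the claimed classical analogue $\langle\eta_{\Top},\eta_{\Top},\eta_{\Top}\rangle\ni\nu_{\Top}$ does not exist either. The paper's obstruction is the bracket $\langle\eta,\nu,\eta\rangle=\{\nu^2\}$ from~(\ref{align:toda-3}): by Proposition~\ref{prop:toda} (Example~\ref{ex:toda-eta-nu-eta}) the lift $\tilde{\nu}$ of $\nu$ to $\pi_{\ast,\ast}\moore{\eta}$ (which exists since $\eta\nu=0$) satisfies $\tilde{\nu}\cdot\eta\neq 0$ because $c_\ast(\nu^2)\neq 0$, whence $\eta\cdot\id_{\moore{\eta}}\neq 0$. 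Your alternative cohomological route also fails as stated: since $\Sq^1x_0=0$ on $h^{\ast,\ast}(\moore{\eta})$, the Cartan formula gives $\Sq^2(x_0\tensor y_0)=x_1\tensor y_0+x_0\tensor y_1$, and the resulting module structure over the subalgebra generated by $\Sq^2$ is isomorphic to that of the wedge $\moore{\eta}\vee\Sigma^{2,1}\moore{\eta}$ (take the basis $x_0\tensor y_0$, $x_1\tensor y_0+x_0\tensor y_1$, $x_1\tensor y_0$, $x_1\tensor y_1$), so $\Sq^2$ cannot detect the non-splitting; the correct primary operation is the ``length-two'' one, $\Sq^4$, for which Cartan gives $\Sq^4(x_0\tensor y_0)=\Sq^2x_0\tensor\Sq^2y_0=x_1\tensor y_1$, while $\Sq^4$ of the bottom class of either wedge summand vanishes. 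With that replacement the cohomological argument could be repaired, but as written neither of your two arguments for $\ell=1$, nor the argument for $\ell>1$, is correct.
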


\begin{proof}
  Let us prove first that $\moore{\eta}$ does not admit a multiplication.
  Again an argument via complex realization works for subfields
  of the complex numbers. However, one may refer to 
  Example~\ref{ex:toda-eta-nu-eta} and Proposition~\ref{prop:toda},
  which imply the existence of an element 
  $\tilde{\nu}\in \pi_{3,2}\moore{\eta}$ such that 
  $\tilde{\nu}\cdot \eta\neq 0$. In particular, 
  $\eta\cdot \id_{\moore{\eta}}\neq 0$. 

  Consider now $\ell >1$. The element $\eta^\ell\cdot \id_{\moore{\eta^\ell}}$
  lies inside the group which
  sits in the middle of the short exact sequence
  \[ 0\to [ \Sigma^{2\ell+1,2\ell}\unit,\moore{\eta^\ell}]/\eta^\ell
  \xrightarrow{d^\ast} [\Sigma^{\ell,\ell}\moore{\eta^\ell},\moore{\eta^\ell}]
  \xrightarrow{c^\ast} {}_{\eta^\ell}[\Sigma^{\ell,\ell}\unit,\moore{\eta^\ell}] = 
  [\Sigma^{\ell,\ell}\unit,\unit]/\eta^\ell \to 0 \]
  whose final term is zero, since 
  $\eta^\ell\colon \pi_{0,0}\unit\to \pi_{\ell,\ell}\unit$ is surjective.
  The initial term is computed by 
  Proposition~\ref{prop:homotopy-groups-moore}. More
  precisely, the group $\pi_{2\ell+1,2\ell}\moore{\eta^\ell}$ sits inside
  the short exact sequence
  \[ 0 \to \pi_{2\ell+1,2\ell}\unit/\eta^\ell\pi_{\ell+1,\ell}\unit 
  \xrightarrow{c_\ast}
  \pi_{2\ell+1,2\ell}\moore{\eta^\ell}
  \xrightarrow{d_\ast} {}_{\eta^{\ell}}\pi_{\ell,\ell}\unit\to 0 \]
  where ${}_{\eta^{\ell}}\pi_{\ell,\ell}\unit =0$ since
  $\eta^\ell \colon \pi_{\ell,\ell}\unit \to \pi_{2\ell,2\ell}\unit$
  is an isomorphism for $\ell \geq 1$. The group
  $\pi_{2\ell+1,2\ell}\unit$ vanishes by Theorem~\ref{thm:oneline} for $\ell>1$,
  whence so does $\pi_{2\ell+1,2\ell}\moore{\eta^\ell}$, and
  thus also $[\Sigma^{\ell,\ell}\moore{\eta^\ell},\moore{\eta^\ell}]$.
\end{proof}

The existence of a multiplication may depend on the base field
in general. Example~\ref{ex:toda-reals} and
Lemma~\ref{lem:mult-order-identity} show that the motivic Moore spectrum
$\moore{2+11\hyper}$ admits no multiplication over a formally real
field $F$. Base change to $F(\sqrt{-1})$ produces the motivic
Moore spectrum $\moore{2+11\hyper} = \moore{24}$, which admits
a multiplication by lifting from topology and quoting \cite{oka.mult-moore}.
Plenty of similar examples may be constructed.
A systematic study on these matters remains a project for the future.
Another project for the future is to enumerate possible multiplications,
as well as investigate their qualitative properties like associativity
and commutativity. Already the enumeration can be challenging. For
example, the group $[\moore{\hyper^2}\smash\moore{\hyper^2},\moore{\hyper^2}]$
receives a nontrivial map from $[\Sigma^{1,0}\moore{\hyper^2},\moore{\hyper^2}]$,
which is injective on the contribution
from the $\hyper^2$-torsion in $\pi_{0,0}\unit$ 
(see Proposition~\ref{prop:homotopy-groups-moore}). The latter coincides
with the fundamental ideal in the Grothendieck-Witt ring, 
as the following statement implies.

\begin{lemma}
\label{lem:hyper-torsion}
  Let $\alpha\in \GW(F)$ be an element with $\dim(\alpha)\neq 0$.
  Then 
  \[ {}_{\alpha{\hyper} }\GW(F) ={}_{\dim(\alpha){\hyper} }\GW(F) = {}_{{\hyper} }\GW(F)= \I(F).\]
\end{lemma}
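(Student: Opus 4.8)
The plan is to identify each of the three torsion submodules occurring in the statement with the fundamental ideal $\I(F) = \ker\bigl(\dim\colon\GW(F)\to\ZZ\bigr)$, by exploiting the single identity
\[ \hyper\cdot\beta = \dim(\beta)\cdot\hyper\qquad\text{for every }\beta\in\GW(F).\]
This identity holds because $\hyper\cdot\langle u\rangle = \langle u,-u\rangle = \langle 1,-1\rangle = \hyper$ for each unit $u$ -- the binary form $\langle u,-u\rangle$ being a hyperbolic plane; equivalently, $\langle u\rangle = 1 + \eta[u]$ and $\eta\hyper = 0$ in $\KMW(F)$ -- and every element of $\GW(F) = \KMW_0(F)$ is a $\ZZ$-linear combination of the classes $\langle u\rangle$, $u\in F^\times$. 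Specializing $\beta$ to $\alpha$ gives the equality $\alpha\hyper = \dim(\alpha)\hyper$ in the ring $\GW(F)$, hence $\alpha\hyper\cdot\gamma = \dim(\alpha)\hyper\cdot\gamma$ for all $\gamma\in\GW(F)$, which already yields ${}_{\alpha\hyper}\GW(F) = {}_{\dim(\alpha)\hyper}\GW(F)$ -- and for this first equality no hypothesis on $\dim(\alpha)$ is needed.

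Next I would record that the subgroup $\ZZ\hyper\subseteq\GW(F)$ is infinite cyclic: since $\dim$ is a ring homomorphism with $\dim(\hyper) = 2$, a relation $n\hyper = 0$ forces $2n = 0$, so $n = 0$. Hence, for any integer $m\neq 0$ and any $\gamma\in\GW(F)$, the element $m\hyper\cdot\gamma = m\,\dim(\gamma)\,\hyper$ vanishes if and only if $m\,\dim(\gamma) = 0$, equivalently $\dim(\gamma) = 0$, equivalently $\gamma\in\I(F)$. Taking $m = \dim(\alpha)$, which is nonzero by hypothesis, gives ${}_{\dim(\alpha)\hyper}\GW(F) = \I(F)$; taking $m = 1$ gives ${}_{\hyper}\GW(F) = \I(F)$. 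Concatenating these identifications yields the chain of equalities in the statement.

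I do not anticipate a genuine obstacle here. The only step that is not bookkeeping with $\dim$ is the identity $\langle u,-u\rangle = \hyper$ in $\GW(F)$ -- equivalently the relation $\eta\hyper = 0$ of Morel's Milnor--Witt $K$-theory -- together with the elementary observation that $\hyper$ has infinite additive order. One could also package the computation via the presentation $W(F) = \GW(F)/\ZZ\hyper$, noting that multiplication by $\hyper$ on $\GW(F)$ is the composite of $\dim$ with the injection $\ZZ\hookrightarrow\GW(F)$, $n\mapsto n\hyper$; but the direct argument above is the shortest.
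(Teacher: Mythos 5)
Your proposal is correct and follows essentially the same route as the paper: both rest on the identity $\beta\cdot\hyper=\dim(\beta)\cdot\hyper$, deduced from $\langle u\rangle\cdot\langle 1,-1\rangle=\langle u,-u\rangle\sim\langle 1,-1\rangle$, and then conclude via $\dim$ and the hypothesis $\dim(\alpha)\neq 0$. Your only additions are to make explicit the (implicit in the paper) fact that $\hyper$ has infinite additive order, checked via $\dim(\hyper)=2$, and to note the equivalent formulation $\eta\hyper=0$ in $\KMW(F)$; neither changes the substance of the argument.
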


\begin{proof}
  Recall that $\hyper = 1-\epsilon=\langle 1,-1\rangle$ is a form of dimension
  two with the property that the subgroup of $\GW(F)$
  generated by $\hyper$ coincides
  with the ideal of $\GW(F)$ generated by $\hyper$. The latter follows
  from the similarity
  \[ \langle u \rangle \cdot \langle 1,-1\rangle = \langle u,-u\rangle
  \sim \langle 1,-1\rangle\]
  of quadratic forms, where $u\in F^\times$. Hence for every 
  $\alpha\in \GW(F)$, the
  equation $\alpha\cdot \hyper = \dim(\alpha)\cdot \hyper$ follows, giving
  the first equation. Also the other equations follow for $\dim(\alpha)\neq 0$, 
  since $\beta\in \GW(F)$ then satisfies $\beta\cdot (\alpha\cdot \hyper)=0$
  if and only if $\dim(\beta)=0$.
  Here recall the short exact sequence
  \[ 0 \to \I(F) \to \GW(F) \xrightarrow{\dim} \ZZ \to 0 \]
  defining the fundamental ideal in $\GW(F)$.
\end{proof}

Prompted by a recent request, this section concludes with a specific example.
Let $n>0$ be a natural number, and let
$n_\varepsilon=\sum\limits_{k=0}^{n-1}\langle (-1)^k\rangle = 
\langle 1, -1,1\dotsc,\pm 1\rangle \in \GW$, a quadratic form
of dimension $n$. 
It turns out that the
motivic Moore spectrum $\moore{n_\varepsilon}$ 
admits a multiplication precisely if the topological
Moore spectrum $\sphere/n$ does. Before proving this,
set $e\colon \moore{\alpha}\xrightarrow{d}\Sigma^{s+1,w}\unit
\xrightarrow{\Sigma^{s+1,w}c}\Sigma^{s+1,w}\moore{\alpha}$ for
$\alpha\in \pi_{s,w}\unit$. Following \cite{oka.mult-moore}, a multiplication 
$\mu\colon \moore{\alpha}\smash \moore{\alpha}\to \moore{\alpha}$
is called \textit{regular} if the equality 
$e\circ \mu = (\Sigma^{s+1,w}\mu)\circ (e\smash \moore{\alpha} + \moore{\alpha}\smash e) 
\colon \moore{\alpha}\smash \moore{\alpha}\to \Sigma^{s+1,w}\moore{\alpha}$
holds.

\begin{lemma}\label{lem:nepsilon}
There exists a multiplication on $\moore{n_\varepsilon}$
if and only if $n\not\equiv 2 (4)$. 
\end{lemma}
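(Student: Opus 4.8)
The plan is to reduce the statement to the already-established criterion of Lemma~\ref{lem:mult-order-identity}, namely that $\moore{n_\varepsilon}$ admits a multiplication if and only if $n_\varepsilon\cdot\id_{\moore{n_\varepsilon}}=0$, and then to determine exactly when this vanishing holds. The first step is to compute $\dim(n_\varepsilon)=n$, so that by Theorem~\ref{thm:moore-forms-nomult} we immediately get the "only if" direction: if $n\equiv 2\pmod 4$, then $\moore{n_\varepsilon}$ does not admit a multiplication. This handles half the claim with essentially no work.

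For the "if" direction, assume $n\not\equiv 2\pmod 4$. I would split into the cases $n$ odd and $n\equiv 0\pmod 4$. When $n$ is odd, $n_\varepsilon$ has odd dimension, so $n_\varepsilon\in\GW(F)^\times$ (an odd-dimensional form represents a unit in $\GW$ after inverting $2$, and more precisely $n_\varepsilon$ itself is a unit when $n$ is odd); hence $\moore{n_\varepsilon}\simeq 0$ is contractible and trivially admits a multiplication. When $n\equiv 0\pmod 4$, write $n=4m$; the key algebraic observation is that in $\GW(F)$ one has $(n_\varepsilon)^2 = n\cdot n_\varepsilon$ when $n$ is even — more precisely, for $n$ even $n_\varepsilon = m\hyper$ where $n=2m$, using that $\langle 1,-1\rangle=\hyper$ and pairs of consecutive signs cancel, so $(n_\varepsilon)^2=(m\hyper)^2=m^2\hyper^2=2m^2\hyper=n\cdot(m\hyper)=n\cdot n_\varepsilon$. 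I would then use Lemma~\ref{lem:order-id-moore}, which gives $\alpha^2\cdot\id_{\moore{\alpha}}=0$ for any $\alpha$: applied to $\alpha=n_\varepsilon$, together with the relation $(n_\varepsilon)^2=n\cdot n_\varepsilon$ in $\pi_{0,0}\unit$, this shows $n\cdot n_\varepsilon\cdot\id_{\moore{n_\varepsilon}}=0$, i.e. $n_\varepsilon\cdot\id_{\moore{n_\varepsilon}}$ is $n$-torsion. Since $n_\varepsilon\cdot\id_{\moore{n_\varepsilon}}$ is also killed by $n_\varepsilon$ (hence by $\dim(n_\varepsilon)=n$ on the relevant Milnor-Witt quotients), and I need to show it vanishes, I would analyze the middle term of the exact sequence~(\ref{eq:id-moore}): there $n_\varepsilon\cdot\id_{\moore{n_\varepsilon}}=d^\ast(x)$ for $x\in[\Sigma^{1,0}\unit,\moore{n_\varepsilon}]/n_\varepsilon \cong \pi_{0,0}\unit/n_\varepsilon$ (using $s=w=0$), and the proof of Lemma~\ref{lem:order-id-moore} shows $n_\varepsilon\cdot x=0$. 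Now $n_\varepsilon=m\hyper$ with $m=2m'$ even (as $n=4m'$... wait, $n=2m$ so $m=2m'$), so by the last sentence of Lemma~\ref{lem:action-forms-pi1} — or rather by the structure of $\pi_{0,0}\unit/n_\varepsilon$ — I claim $x$ itself must vanish. The cleanest route: since $n\equiv 0\pmod 4$, the topological Moore spectrum $\sphere/n$ admits a multiplication by Oka~\cite{oka.mult-moore}, so $n\cdot\id_{\sphere/n}=0$; applying $\const$ gives $n\cdot\id_{\moore{n}}=0$, and there is a map $\moore{n}\to\moore{n_\varepsilon}$ (induced by $n\mapsto n_\varepsilon$ via... actually by $n_\varepsilon = \hyper\cdot(n/2)$ and the rank map, since $n\cdot\id$ factors appropriately) from which one transfers the vanishing $n_\varepsilon\cdot\id_{\moore{n_\varepsilon}}=0$.

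The main obstacle I anticipate is the $n\equiv 0\pmod 4$ case: establishing $n_\varepsilon\cdot\id_{\moore{n_\varepsilon}}=0$ requires more than the purely formal Lemma~\ref{lem:order-id-moore}, since that lemma only gives $(n_\varepsilon)^2\cdot\id=0$, and one must exploit the specific relation $(n_\varepsilon)^2=n\cdot n_\varepsilon$ together with input from topology (Oka's result that $\sphere/n$ is a ring spectrum for $4\mid n$). The bridge between the motivic element $n_\varepsilon=m\hyper$ and the topological $n$ is precisely the dimension/rank homomorphism $\dim\colon\GW\to\ZZ$, under which $n_\varepsilon\mapsto n$; making the comparison $\moore{n}\to\moore{n_\varepsilon}$ precise — and checking it sends $\id$ to something that detects $n_\varepsilon\cdot\id_{\moore{n_\varepsilon}}$ — is the delicate point, closely parallel to the discussion of the pairing $\moore{\hyper}\smash\moore{2\hyper}\to\moore{\hyper}$ already in the text. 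Once that comparison is set up, the conclusion is immediate from Lemma~\ref{lem:mult-order-identity}.
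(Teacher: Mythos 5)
Your ``only if'' direction is exactly the paper's (Theorem~\ref{thm:moore-forms-nomult} applied to $\dim(n_\varepsilon)=n$), but both branches of your ``if'' direction have genuine problems. For $n$ odd, the claim that $n_\varepsilon$ is a unit in $\GW(F)$, so that $\moore{n_\varepsilon}\simeq 0$, is false: a unit of $\GW(F)$ must have dimension $\pm 1$ (and already over $\RR$, where $\GW(\RR)$ embeds in $\ZZ\times\ZZ$ by rank and signature, $3_\varepsilon\mapsto(3,1)$ is visibly not invertible, nor does inverting $2$ help, since $3\notin\ZZ[1/2]^\times$). Indeed the paper computes $\pi_{0,0}\moore{n_\varepsilon}\iso\ZZ/n\neq 0$, so the spectrum is not contractible. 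The actual odd case requires work: one shows ${}_{n_\varepsilon}\GW=0$, computes $\pi_{1,0}\moore{n_\varepsilon}$ via Lemma~\ref{lem:action-forms-pi1} (it vanishes when $3\nmid n$ but equals $\KMil_2/3$ when $3\mid n$), and in the latter case splits the extension~(\ref{eq:id-moore}) by reducing to prime fields and invoking Milnor's computation of $K_2^{\mathsf{M}}(\QQ)/3$, before Lemma~\ref{lem:mult-order-identity} can be applied.

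For $n\equiv 0\ (4)$ the algebra $(n_\varepsilon)^2=n\cdot n_\varepsilon$ together with Lemma~\ref{lem:order-id-moore} only gives $n\cdot n_\varepsilon\cdot\id_{\moore{n_\varepsilon}}=0$, and the obstruction $n_\varepsilon\cdot\id_{\moore{n_\varepsilon}}$ lives (via $d^\ast$) in a quotient of $\pi_{1,0}\moore{n_\varepsilon}$ that is genuinely nontrivial, so nothing formal forces it to vanish. The proposed ``transfer'' along a map $\moore{n}\to\moore{n_\varepsilon}$ is not constructed and cannot be produced in the naive way: a map of cofibers compatible with the bottom cells would require an element $\beta\in\GW$ with $n_\varepsilon\beta=n$, i.e.\ $\dim(\beta)\hyper=2$, which has no solution, and in any case killing $n\cdot\id$ is weaker than killing $n_\varepsilon\cdot\id$. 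The paper's route is different and essentially unavoidable in this form: starting from Theorem~\ref{thm:mult-moore-square} one corrects a multiplication on $\moore{\hyper^2}$ to a \emph{regular} one, lifts regular multiplications inductively along the cofiber sequences $\moore{\hyper}\to\moore{\hyper^{\ell+1}}\to\moore{\hyper^\ell}$, and finally lifts from $\moore{\hyper^r}$ to $\moore{m\hyper^r}=\moore{n_\varepsilon}$ through $\moore{\hyper}\to\moore{m}\to\moore{m\hyper^r}\to\moore{\hyper^r}$, where the obstruction $\delta\circ\mu\circ(\rho\smash\rho)$ is shown to vanish using regularity together with the topological input that $m\cdot\id_{\sphere/m}=0$. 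Oka's theorem enters only through this last, carefully localized step, not through a direct comparison of $\moore{n}$ with $\moore{n_\varepsilon}$; as written, your plan has a gap precisely where the paper does its main work.
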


\begin{proof}
  Theorem~\ref{thm:moore-forms-nomult} says that $\moore{n_\varepsilon}$
  does not admit a multiplication if $n\equiv 2(4)$. Suppose
  first that $n=2m+1$ is an odd natural number. Any element $\alpha\in \GW$
  with $n_\varepsilon\alpha = 0$ then satisfies $\dim(\alpha)=0$, and
  hence lies in the fundamental ideal. Since
  $n_\varepsilon= 1+m\hyper$ and the element $\hyper\alpha$ is hyperbolic of 
  dimension zero, one concludes $\alpha=0$. 
  Proposition~\ref{prop:homotopy-groups-moore} then provides an isomorphism 
  $\pi_{1,0}\moore{n_\varepsilon}\iso \pi_{1,0}\unit/n_\varepsilon\pi_{1,0}\unit$.
  Lemma~\ref{lem:action-forms-pi1} implies that the map
  $\pi_{1,0}\unit\xrightarrow{n_\varepsilon} \pi_{1,0}\unit$ is surjective
  if $n\not\equiv 0(3)$. Hence if $n$ is odd and not divisible by three,
  $\pi_{1,0}\moore{n_\varepsilon}=0$. The short exact 
  sequence~(\ref{eq:id-moore}) then reduces to
  $[\moore{n_\varepsilon},\moore{n_\varepsilon}]\iso \pi_{0,0}\moore{n_\varepsilon}$.
  In particular, the identity on $\moore{n_\varepsilon}$ is annihilated by
  $n_\varepsilon$, which provides the existence of a multiplication
  by Lemma~\ref{lem:mult-order-identity}. If $n$ is odd and divisible
  by three, $\pi_{1,0}\moore{n_\varepsilon}\iso \mathbf{K}^\Mil_2/3$. 
  In order to conclude for such odd numbers as well, it suffices to
  prove that the short exact sequence
  \begin{equation}\label{eq:2} 
    0 \to \pi_{1,0}\moore{n_\varepsilon}\iso \mathbf{K}^\Mil_2/3
    \to [\moore{n_\varepsilon},\moore{n_\varepsilon}] 
    \to \pi_{0,0}\moore{n_\varepsilon} \to 0 
  \end{equation}
  splits. Because $n_\varepsilon=1+m\cdot \hyper$ acts as the identity on the
  Witt group, $\pi_{0,0}\moore{n_\varepsilon}\iso \ZZ/n$ for any field,
  generated by $\hyper$. Hence if the sequence~(\ref{eq:2}) splits
  over prime fields, it does so over any field, reducing the task
  to $F=\mathbb{Q}$. In this case, \cite[Theorem 11.6]{milnor.k} 
  supplies an isomorphism 
  $\mathbf{K}^\Mil_2/3(\QQ)
  \iso \bigoplus\limits_{p \mathrm{\ prime}, p\equiv 1(3)} \ZZ/3$ whose contribution
  at the prime $p\equiv 1(3)$ is induced by the localization sequence
  \[ \Spec(\FF_p) \hookrightarrow \Spec(\ZZ_{(p)}) \hookleftarrow \Spec(\QQ).\]
  Since both $\hyper$ and the element $n_\varepsilon$ are defined over 
  $\Spec(\ZZ)$, it follows that the sequence~(\ref{eq:2}) splits. 
  
  Suppose now that $n=2^rm$ for some odd natural number $m$, with $r\geq 2$. 
  Then $n_\varepsilon= m\hyper^r$. By Theorem~\ref{thm:mult-moore-square},
  $\moore{\hyper^2}$ admits a multiplication $\mu$. If $\mu$ is not regular,
  the difference 
  $x=e\circ \mu - \mu\circ (e\smash \moore{\hyper^2} + \moore{\hyper^2}\smash e)$ 
  lifts to produce a unique element $y\in \pi_{1,0}\moore{\hyper^2}$
  such that $y\circ (d\smash d) = x$. The latter equation shows that
  $x$ is an element of order two, because $d\smash d = - d\smash d$, 
  hence so is $y$. The vanishing $0=d\circ y\in \pi_{0,0}\moore{\hyper^2}$
  implies that there exists $z\in \pi_{1,0}\unit$ with $c\circ z=y$. The image
  of $\pi_{1,0}\unit$ under composition with $c$ is isomorphic, by 
  Theorem~\ref{thm:pi1f1unit}, to 
  $\mathbf{K}^\Mil_2/4 \directsum
  \mathbf{k}^\Mil_1\directsum \mathbf{k}^\Mil_0$. The map 
  $\lambda\colon \moore{\hyper^2}\to 
  \moore{\hyper^4}$ induced by multiplication with $\hyper^2$ on the bottom cell
  does not necessarily induce the zero homomorphism on $\pi_{1,0}$.
  More specifically, it induces multiplication by 4 as a homomorphism
  \[ \mathbf{K}^\Mil_2/4 \directsum
  \mathbf{k}^\Mil_1\directsum \mathbf{k}^\Mil_0 \to \mathbf{K}^\Mil_2/8 \directsum
  \mathbf{k}^\Mil_1\directsum \mathbf{k}^\Mil_0 \] 
  and in particular sends the 2-torsion element $y$ to zero. The homotopy cofiber 
  sequence 
  \[ \moore{\hyper^2}\xrightarrow{\lambda }\moore{\hyper^2}\xrightarrow{\rho}
  \moore{\hyper^2}\xrightarrow{e} \Sigma^{1,0}\moore{\hyper^2} \]
  implies the existence of $w\in \pi_{2,0}\moore{\hyper^2}$ with $e\circ w= y$.
  Then $\mu - w\circ (d\smash d)$ is a regular multiplication on $\moore{\hyper^2}$. Hence a regular multiplication exists on $\moore{\hyper^2}$.

  Inductively, one may lift
  a regular multiplication on $\moore{\hyper^{\ell}}$ to 
  a regular multiplication on $\moore{\hyper^{\ell+1}}$ via 
  the homotopy cofiber sequence
  \[ \moore{\hyper}\xrightarrow{\lambda}\moore{\hyper^{\ell+1}} \xrightarrow{\rho}
  \moore{\hyper^{\ell}}\xrightarrow{\delta}  \moore{\hyper}. \]
  Here $\lambda$ and $\rho$ are induced by suitable multiplications with
  $\hyper^{\ell}$ on the bottom cell and $\hyper$ on the top cell, respectively, and 
  $\delta$ is the composition 
  $\moore{\hyper^{\ell}}\xrightarrow{d} \Sigma^{1,0}\unit\xrightarrow{\Sigma^{1,0}c}
  \moore{\hyper}$. 
  The homotopy cofiber sequence
  \[ \moore{\hyper}\xrightarrow{\lambda}\moore{m} \xrightarrow{\rho}
  \moore{m\hyper^{r}}\xrightarrow{\delta}  \moore{\hyper^r} \]
  allows to lift a regular multiplication $\mu$ on $\moore{\hyper^r}$ to
  a regular multiplication on $\moore{n_{\varepsilon}}$. Over quadratically
  closed fields, $\delta=0$ (as in topology, 
  see \cite[Lemma 5]{oka.mult-moore}), but $\delta\neq 0$ for
  formally real fields. Nevertheless the equality
  $\delta \circ \mu \circ (\rho\smash \rho) =0$ always holds. This
  computation follows from the fact that $\mu$ is regular and
  two other inputs. One input is the
  result from topology that $m$ is zero on $\sphere/m$, and hence
  also on its image $\moore{m}$ in the motivic stable homotopy category.
  In order to apply the regularity, one uses that the composition 
  $\moore{\hyper^r}\xrightarrow{d}\Sigma^{1,0}\unit
  \xrightarrow{\Sigma^{1,0}c}\Sigma^{1,0}\moore{m}$ factors as 
  $\moore{\hyper^r}\xrightarrow{d}\Sigma^{1,0}\unit
  \xrightarrow{\Sigma^{1,0}c}\Sigma^{1,0}\moore{\hyper^r}
  \xrightarrow{\Sigma^{1,0}\phi} \Sigma^{1,0}\moore{m}$ for some
  $\phi\colon \moore{\hyper^r}\to \moore{m}$. This provides
  a regular multiplication on $\moore{n_{\varepsilon}}$
  for $n\not\equiv 2(4)$, which concludes the proof.
\end{proof}


\section{Slices of motivic Moore spectra}
\label{sec:slices-moore-spectra}

Instead of considering slices for general motivic Moore spectra,
the focus here -- motivated by arguments regarding the
vanishing of higher slice differentials in \cite[Section 4]{rso.oneline} -- 
is on $\moore{n\hyper}$, 
where $0<n\in \NN$. As explained in the proof of Lemma~\ref{lem:hyper-torsion},
it suffices to consider motivic Moore spectra with respect
to $n\hyper$ instead of 
$\alpha\hyper$ for $\alpha\in \GW(F)$ of $\dim(\alpha)\neq 0$.
Since $\hyper = 0 \colon \KW \to \KW$ as an endomorphism
on the motivic spectrum representing higher Witt groups, the 
motivic spectrum $\KW_{n\hyper}$
splits as $\KW\vee \Sigma^{1,0}\KW$. 
The same holds for its (effective or connective) covers,
and also for the corresponding slices. In particular, the first
slice differential for $\KW_{n\hyper}$ splits. For reference
purposes, the explicit form is as follows.

\begin{theorem}
\label{thm:ckwnhyper-diff}
Let $0<n\in \NN$. The restriction of the slice $\dd^{1}$-differential to the summand $\Sigma^{q + j,q}\MZ/2$ of $\s_{q}(\KW_{n\hyper})$ is given by 
\begin{equation*}
\dd^{1}(\KW_{n\hyper})(q,j)
= 
\begin{cases}
(\Sq^{3}\Sq^{1},0,\Sq^{2}) & j\equiv 0,1\bmod 4 \\
(\Sq^{3}\Sq^{1},0,\Sq^{2}+\rho\Sq^{1},0,\tau) & j\equiv 2,3\bmod 4 \\
\end{cases}
\label{equation:ckwnhyper-diff1}
\end{equation*}
Here the $i$th component of the map $\dd^{1}(\KW_{n\hyper})(q,j)$ of motivic
spectra is a map $\Sigma^{q+j,q}\MZ/2\to\Sigma^{q+j+i,q+1}\MZ/2$. 
\end{theorem}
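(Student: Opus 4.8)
The plan is to reduce, via the splitting of $\KW_{n\hyper}$ already recorded before the statement, to a computation of the first slice differential of Witt $K$-theory that is essentially contained in \cite{aro.kq} and \cite{rso.oneline}. Since $\hyper$ acts as the zero endomorphism of $\KW$ one has $\KW_{n\hyper}\simeq\KW\vee\Sigma^{1,0}\KW$ (and likewise for the connective cover $\ckw$); the slice functors commute with finite wedges and with $\Sigma^{1,0}$, and the slice tower is functorial, so $\s_q(\KW_{n\hyper})\simeq\s_q\KW\vee\Sigma^{1,0}\s_q\KW$ and $\dd^1(\KW_{n\hyper})\simeq\dd^1(\KW)\vee\Sigma^{1,0}\dd^1(\KW)$, with no cross terms. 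A summand $\Sigma^{q+j,q}\MZ/2$ of $\s_q(\KW_{n\hyper})$ is thus either a summand of $\s_q\KW$, or one coming from the factor $\Sigma^{1,0}\KW$, which is a summand of $\s_q\KW$ with $j$ lowered by one; in both cases the restriction of $\dd^1(\KW_{n\hyper})$ to it is the restriction of $\dd^1(\KW)$ to the corresponding summand of $\s_q\KW$. This is exactly why the statement pairs the residues $j\equiv 0,1$ and $j\equiv 2,3\bmod 4$: a differential supported on an even-$j$ summand of $\s_q\KW$ reappears on the odd-$j$ summand contributed by $\Sigma^{1,0}\KW$.

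It remains to compute $\dd^1(\KW)$ on each summand of $\s_q\KW$. For this I would begin with the slices of $\KQ$ together with the differential $\dd^1(\KQ)$ (see \cite[Proposition~27]{aro.kq}, and compare Lemma~\ref{lem:pi1f1kq}) and then invert $\eta$: since $\KW$ arises from $\KQ$ (equivalently $\kq$) by inverting $\eta$ and the slice functors commute with this filtered colimit, $\s_q\KW$ is the colimit of the $\eta$-tower $\s_q\KQ\to\Sigma^{-1,-1}\s_{q-1}\KQ\to\cdots$. The integral Eilenberg--MacLane summands of the slices of $\KQ$ are $\eta$-torsion and die in the colimit -- which is precisely the source of the absence of integral motivic cohomology highlighted in the introduction -- leaving $\s_q\KW$ a wedge of shifted copies of $\MZ/2$. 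From the $(8,4)$-periodicity of $\KQ$ one gets $\KW\simeq\Sigma^{4,0}\KW$, and together with $\Sigma^{1,1}\KW\simeq\KW$ this gives $\s_q\KW\iso\Sigma^{q,q}\s_0\KW$ with $\s_0\KW$ an infinite, $(4,0)$-periodic wedge of mod-$2$ Eilenberg--MacLane spectra; in particular $\dd^1(\KW)(q,j)$ does not depend on $q$, and the mod-four dependence on $j$ is forced by the $\Sigma^{4,0}$-periodicity. Naturality of the slice tower under $\unit\to\kq\to\KW$ transports $\dd^1(\kq)$ and identifies the high-filtration components of $\dd^1(\KW)$ as $\Sq^2$ (bidegree $(2,1)$, the $i=2$ component) and $\Sq^3\Sq^1$ (bidegree $(4,1)$, the $i=4$ component), present in every residue class; the remaining low-filtration components, multiplication by $\tau$ (the $i=0$ component) and $\rho\Sq^1$ (an additional $i=2$ summand), are produced by the connecting maps of the $\eta$-tower and occur precisely for $j\equiv 2,3\bmod 4$.

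The hard part is pinning these low-filtration terms down exactly -- distinguishing $\Sq^2+\rho\Sq^1$ from $\Sq^2$ and detecting the $\tau$-summand -- since they are motivic refinements invisible over an algebraically closed base. The coarse shape is under control: $\MZ/2$-linearity of the slice differentials forces each component $\Sigma^{q+j,q}\MZ/2\to\Sigma^{q+j+i,q+1}\MZ/2$ to be an $\FF_2$-combination of bidegree $(i,1)$ of admissible Steenrod operations and of multiplications by powers of $\rho$ and by $\tau$, and the support of $\dd^1(\kq)$ and of the $\eta$-gluing maps in low degree confines $i$ to $0\le i\le 4$, leaving only the short candidate lists appearing in the statement. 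To fix the coefficients I would use the multiplicativity of the slice spectral sequence over that of $\unit$ (with the Cartan formula of \cite[Proposition~9.7]{voevodsky.reduced} as input) and the comparison with the slice differentials of $\unit$ from \cite{rso.oneline}; a $\rho$-Bockstein argument via base change along $F\hookrightarrow F(\sqrt{-1})$, which kills $\rho$, isolates the $\rho\Sq^1$ contribution; and base change to prime fields as in Proposition~\ref{prop:pi10KQ}, over which the relevant groups of motivic cohomology operations are uniquely pinned down, fixes the $\tau$-component. Once these identifications are in place, the stated formula and its periodicity in $j$ are bookkeeping.
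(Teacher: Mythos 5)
Your first step coincides with the paper's: since $\hyper=0$ on $\KW$, the splitting $\KW_{n\hyper}\simeq\KW\vee\Sigma^{1,0}\KW$ (already recorded before the statement) together with the fact that the slice functors are triangulated reduces everything to $\dd^1(\KW)$, and your observation that this is what pairs the residues $j\equiv 0,1$ and $j\equiv 2,3\bmod 4$ is exactly the intended mechanism. The divergence is in what comes next. The paper does not rederive $\dd^1(\KW)$: it quotes the determination of the first slice differential for $\KW$ from \cite{roendigs-oestvaer.hermitian}, and that citation carries the entire computational content of the theorem. You instead propose to reconstruct $\dd^1(\KW)$ from $\dd^1(\kq)$ (or $\dd^1(\KQ)$) by inverting $\eta$, and this part of your argument is a plan rather than a proof. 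The periodicity and wedge-shape statements are fine, and naturality along $\kq\to\KW$ does transport the $\Sq^2$ and $\Sq^3\Sq^1$ components; but the pieces that distinguish the two cases of the formula --- the $\tau$-component and the $\rho\Sq^1$-correction on the summands with $j\equiv 2,3\bmod 4$ --- are precisely the ``motivic refinements'' you defer to ``a $\rho$-Bockstein argument'' and ``base change to prime fields,'' with no computation carried out and no argument that these devices actually pin down the coefficients (over an algebraically closed or finite base the $\rho$- and $\tau$-ambiguities are exactly what survive, so it is not evident that the proposed base changes detect them rather than erase them). Identifying these low-filtration terms is the hard part of \cite{roendigs-oestvaer.hermitian}; leaving it at the level of a strategy is a genuine gap, since without it the stated case distinction mod $4$ is not established.

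Two smaller points if you do want to pursue the $\eta$-inversion route. The $\eta$-tower is indexed the wrong way in your sketch: from $\eta\colon\Sigma^{1,1}\KQ\to\KQ$ one gets $\KW=\hocolim(\KQ\to\Sigma^{-1,-1}\KQ\to\cdots)$ and $\s_q(\Sigma^{-1,-1}\KQ)\simeq\Sigma^{-1,-1}\s_{q+1}(\KQ)$, so the colimit computing $\s_q\KW$ runs over $\s_q\KQ\to\Sigma^{-1,-1}\s_{q+1}\KQ\to\cdots$, not over $\s_{q-1}\KQ$; the transition maps in this tower (computed in \cite{roendigs-oestvaer.hermitian}) are themselves nontrivial input, since $\eta$ acts as zero on each individual slice. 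And the reference you lean on for $\dd^1(\kq)$ should be \cite[Theorem~3.5]{aro.kq} (the source the paper itself uses in the neighbouring Theorem~\ref{thm:slices-kqnh}) rather than the homotopy-group statement of \cite[Proposition~27]{aro.kq}. The cleanest repair of your write-up is simply to import $\dd^1(\KW)$ from \cite{roendigs-oestvaer.hermitian}, after which your first paragraph already finishes the proof.
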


\begin{proof}
  This follows from the determination of the first slice differential
  for $\KW$ from \cite{roendigs-oestvaer.hermitian}, and the aforementioned
  splitting $\KW_{n\hyper}\simeq \KW \vee \Sigma^{1,0}\KW$.
\end{proof}

Here and in the following, the notation regarding the motivic Steenrod
algebra is standard; for example, $\rho$ is the class of $-1$ in $h^{1,1}$, and
$\Q_1=\Sq^2\Sq^1+\Sq^1\Sq^2$. 
The slice computation is slightly more complicated for the motivic spectrum
$\KQ$ representing hermitian $K$-theory. For comparison purposes, the case of
$\kq$,
the very effective cover of $\KQ$ \cite{aro.kq}, \cite{bachmann.generalized},
is more convenient.
Set $\kq_{n\hyper}:=\kq\smash \moore{n\hyper}$,
and similarly $\ckw_{n\hyper}:=\ckw\smash \moore{n\hyper}=
\ckw \vee \Sigma^{1,0}\ckw$. Here $\ckw := \KW_{\geq 0} \iso \kq[\eta^{-1}]$
is the connective cover of $\KW$, not its very effective cover
(which might deserve the notation $\kw$). The element
$\eta$ acts still invertibly on $\ckw$. 

The final set of notation concerns long exact
sequences of motivic cohomology groups induced
by change of coefficients in cyclic groups.
Given natural numbers $m,n$, the inclusion 
$\ZZ/m \hookrightarrow \ZZ/{mn}$
induces a homomorphism on motivic cohomology denoted
$\inc^{m}_{mn}$, and the projection
$\ZZ/mn \twoheadrightarrow \ZZ/n$
induces a homomorphism on motivic cohomology denoted
$\pr^{mn}_{n}$. The short exact sequence
\[ 0\to \ZZ/m \to \ZZ/mn \to \ZZ/n \to 0\]
induces a Bockstein or boundary
homomorphism $\partial^{n}_{m}$. 

\begin{theorem}\label{thm:slices-kqnh}
  Let $0<n,0\leq q$.
  The slices of $\kq_{n\hyper}$ are given as follows:
  \begin{align*}
    \s_{2q} (\kq_{n\hyper}) & = \Sigma^{4q,2q} \MZ/2n \vee \bigvee_{j=0}^{2q-1} \Sigma^{2q+j,2q}\MZ/2 \\
    \s_{2q+1} (\kq_{n\hyper}) & = \s_{2q+1}(\kq)\vee \Sigma^{1,0}\s_{2q+1}(\kq) =
                        \bigvee_{j=0}^{2q+1} \Sigma^{2q+1+j,2q+1}\MZ/2 
  \end{align*}
  The canonical map $\kq_{n\hyper} \to \ckw_{n\hyper}$ induces the following
  map on summands of slices: 
  \begin{align*}
    \Sigma^{4q,2q}\MZ/2n & \xrightarrow{(\partial^{2n}_{2},\pr^{2n}_{2})}
    \Sigma^{4q+1,2q}\MZ/2\vee \Sigma^{4q,2q}\MZ/2 \\
    \Sigma^{q+j,q}\MZ/2 & \xrightarrow{(1)} \Sigma^{q+j,q}\MZ/2
                          \quad \mathrm{if}\   j \equiv 0(2) \  \mathrm{or} \  n\equiv 0(2) \\
    \Sigma^{q+j,q}\MZ/2 & \xrightarrow{(\Sq^1,1)}
    \Sigma^{q+j+1,q}\MZ/2\vee \Sigma^{q+j,q}\MZ/2 \quad \mathrm{if}\ j \equiv 1(2) \ \mathrm{and} \  n \equiv 1(2)
  \end{align*}
  The restriction of the slice $\dd^{1}$-differential for $\kq_{n\hyper}$
  to the summands $\Sigma^{q+j,q}\MZ/2$ or $\Sigma^{4q,2q}\MZ/2n$ of $\s_{q}(\kq_{n\hyper})$ for $n$ even is given by
  \begin{align*}
    \dd^{1}(\kq_{n\hyper})(q,j) 
    & =  
      \begin{cases}
        (\Sq^{3}\Sq^{1},0,\Sq^{2}) & q-1>j\equiv 0,1\bmod 4 \\
        (\Sq^{3}\Sq^{1},0,\Sq^{2}+\rho\Sq^{1},0,\tau) & q-1>j\equiv 2,3\bmod 4 \\
      \end{cases} \\
    \dd^{1}(\kq_{n\hyper})(q,q-1) 
    & =  
      \begin{cases}
        (\partial^{2}_{2n}\Sq^{2}\Sq^{1},0,\Sq^{2}) & q-1\equiv 0\bmod 4 \\
        (\Sq^{3}\Sq^{1},0,\Sq^{2})& q-1\equiv 1\bmod 4 \\
        (\partial^{2}_{2n}\Sq^{2}\Sq^{1},0,\Sq^{2}+\rho\Sq^1,0,\tau) & q-1\equiv 2\bmod 4 \\
        (\Sq^{3}\Sq^{1},0,\Sq^{2}+\rho\Sq^{1},0,\tau) & q-1\equiv 3\bmod 4. 
      \end{cases} \\
    \dd^{1}(\kq_{n\hyper})(q,q) 
    & =  
      \begin{cases}
        (\Sq^{2}\partial^{2n}_{2},\Sq^{2}\pr^{2n}_{2}) & q\equiv 0\bmod 4 \\
        (\inc^{2}_{2n}\Sq^{2}\Sq^{1},\Sq^{2}) & q\equiv 1\bmod 4 \\
        (\Sq^{2}\partial^{2n}_{2},\Sq^{2}\pr^{2n}_{2},\tau\partial^{2n}_{2},\tau\pr^{2n}_{2}) & q\equiv 2\bmod 4 \\
        (\inc^{2}_{2n}\Sq^{2}\Sq^{1},\Sq^{2}+\rho\Sq^1,0,\tau) & q\equiv 3\bmod 4. 
      \end{cases} 
  \end{align*}
  The restriction of the slice $\dd^{1}$-differential for $\kq_{n\hyper}$ to the summands $\Sigma^{q+j,q}\MZ/2$ or $\Sigma^{4q,2q}\MZ/2n$ of $\s_{q}(\kq_{n\hyper})$ for $n$ odd is given by
  \begin{align*}
    \dd^{1}(\kq_{n\hyper})(q,j) 
    & =  
      \begin{cases}
        (\Sq^{3}\Sq^{1},0,\Sq^{2}) & q-1>j\equiv 0\bmod 4 \\
        (\Sq^{3}\Sq^{1},\Q_1,\Sq^{2},\tau\Sq^{1}) & q-1>j\equiv 1\bmod 4 \\
        (\Sq^{3}\Sq^{1},0,\Sq^{2}+\rho\Sq^{1},0,\tau) & q-1>j\equiv 2\bmod 4 \\
        (\Sq^{3}\Sq^{1},\Q_1,\Sq^{2}+\rho\Sq^{1},\tau\Sq^{1}+\rho,\tau) & q-1>j\equiv 3\bmod 4, 
      \end{cases} \\
    \dd^{1}(\kq_{n\hyper})(q,q-1) 
    & =  
      \begin{cases}
        (\partial^{2}_{2n}\Sq^{2}\Sq^{1},0,\Sq^{2}) & q-1\equiv 0\bmod 4 \\
        (\Sq^{3}\Sq^{1},\Q_1,\Sq^{2},\tau\Sq^1) & q-1\equiv 1\bmod 4 \\
        (\partial^{2}_{2n}\Sq^{2}\Sq^{1},0,\Sq^{2}+\rho\Sq^{1},0,\tau) & q-1\equiv 2\bmod 4 \\
        (\Sq^{3}\Sq^{1},\Q_1,\Sq^{2}+\rho\Sq^1,\tau\Sq^1+\rho,\tau) & q-1\equiv 3\bmod 4. 
      \end{cases} \\
    \dd^{1}(\kq_{n\hyper})(q,q) 
    & =  
      \begin{cases}
        (\Sq^{2}\partial^{2n}_{2},\Sq^{2}\pr^{2n}_{2}) & q\equiv 0\bmod 4 \\
        (\inc^{2}_{2n}\Sq^{2}\Sq^{1}+\partial^{2}_{2n}\Sq^2,\Sq^{2},\tau\Sq^1) & q\equiv 1\bmod 4 \\
        (\Sq^{2}\partial^{2n}_{2},\Sq^{2}\pr^{2n}_{2},\tau\partial^{2n}_{2},\tau\pr^{2n}_{2}) & q\equiv 2\bmod 4 \\
        (\inc^{2}_{2n}\Sq^{2}\Sq^{1}+\partial^{2}_{2n}\Sq^2,\Sq^{2}+\rho\Sq^1,\tau\Sq^1+\rho,\tau) & q\equiv 3\bmod 4. 
      \end{cases} 
  \end{align*}
\end{theorem}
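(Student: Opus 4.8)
The plan is to derive everything from the homotopy cofiber sequence $\kq \xrightarrow{n\hyper} \kq \to \kq_{n\hyper} \to \Sigma^{1,0}\kq$, obtained by smashing the cofiber sequence defining $\moore{n\hyper}$ with $\kq$, together with the known slice data for $\kq$ from \cite{aro.kq} and for $\KW$ from \cite{roendigs-oestvaer.hermitian}. Applying the triangulated functor $\s_q$ gives $\s_q(\kq_{n\hyper}) \iso \cone\bigl(\s_q\kq \xrightarrow{\s_q(n\hyper)} \s_q\kq\bigr)$. The endomorphism of $\s_q\kq$ induced by $n\hyper \in \pi_{0,0}\unit = \GW$ is multiplication by $\dim(n\hyper) = 2n$, since $\pi_{0,0}\unit$ acts on the $\s_0\unit = \MZ$-module $\s_q\kq$ through the rank homomorphism $\dim\colon \GW\to\ZZ$ (exactly as used already in the proof of Theorem~\ref{thm:moore-forms-nomult}). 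Plugging in the description of $\s_q\kq$ as a wedge of suspensions of $\MZ$ — a single summand $\Sigma^{4q',2q'}\MZ$ in even slice degree $q = 2q'$, none in odd slice degree — and of $\MZ/2$, and using $\cone(2n\colon\MZ\to\MZ)\iso\MZ/2n$ together with $\cone(2n\colon\MZ/2\to\MZ/2)\iso\MZ/2\vee\Sigma^{1,0}\MZ/2$, produces the asserted wedge decompositions of $\s_{2q}(\kq_{n\hyper})$ and $\s_{2q+1}(\kq_{n\hyper})$.

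Although the underlying spectra of these slices do not depend on the parity of $n$, I would record the following refinement: the two cells of each ``extra'' summand $\Sigma^{q+j,q}\MZ/2$ with $j$ odd — the one arising as the cone of $2n$ on a $\MZ/2$-summand of $\s_q\kq$ sitting in even $j$ — are linked by $\Sq^1$ precisely when $\dim(n\hyper) = 2n\equiv 2\bmod 4$, that is, when $n$ is odd. This is the same computation of $h^{\star,\star}(\moore{n\hyper})$ as in the proof of Theorem~\ref{thm:moore-forms-nomult}, and it is the origin of the split into the ``$n$ even'' and ``$n$ odd'' cases. The map $\kq_{n\hyper}\to\ckw_{n\hyper}$ on slices is then $\cone(2n\colon -\to -)$ applied to the map $\s_q\kq\to\s_q\ckw$ coming from the connective cover of $\kq\to\kq[\eta^{-1}]$; on the distinguished summand it is the cone of the reduction $\MZ\to\MZ/2$ under multiplication by $2n$, which a short diagram chase with the defining short exact sequences of $\pr^{2n}_{2}$ and $\partial^{2n}_{2}$ identifies with $(\partial^{2n}_2,\pr^{2n}_2)$; on the remaining $\MZ/2$-summands the identity induces the identity unless $n$ and $j$ are both odd, in which case the mixing of the $\Sq^1$-linked cells produces the component $\Sq^1$.

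The technical heart is the slice differential. Since the first slice differential is a natural transformation $\s_q(-)\to\Sigma^{1,0}\s_{q+1}(-)$, the map of cofiber sequences above exhibits $\dd^1(\kq_{n\hyper})$ as the map induced on horizontal cofibers by the endomorphism ``$\cdot 2n$'' of the arrow $\dd^1(\kq)$. I would write $\dd^1(\kq)$ in matrix form from \cite{aro.kq}: its entries are $\Sq^2$, $\Sq^3\Sq^1$, and $\rho$- and $\tau$-twists of these, arranged with the four-fold periodicity of the slices of $\kq$ and entirely parallel to the differential $\dd^1(\KW)$ recorded in Theorem~\ref{thm:ckwnhyper-diff}. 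One then computes the cone entry by entry. Away from the top, i.e.\ for $q-1>j$, the cone of $\cdot 2n$ on a wedge of $\MZ/2$'s reproduces $\dd^1(\KW_{n\hyper})$ when $n$ is even; when $n$ is odd the $\Sq^1$-linking of the two cells of $\moore{n\hyper}$ feeds, through the motivic Cartan formula \cite[Proposition 9.7]{voevodsky.reduced} (whose cross term carries the factor $\tau$) and the relations $\Q_1 = \Sq^2\Sq^1+\Sq^1\Sq^2$ and $\Sq^1\Sq^1 = 0$, into the additional entries $\Q_1$, $\tau\Sq^1$ and $\rho$ on the odd-$j$ summands. For the boundary summands $j = q-1$ and $j = q$ the differential additionally interacts with the $\MZ/2n$-summand, replacing the relevant occurrences of $\Sq^1$, $\Sq^2\Sq^1$ and so on by their composites with $\inc^2_{2n}$, $\pr^{2n}_2$, $\partial^2_{2n}$ or $\partial^{2n}_2$; the precise decoration is pinned down by naturality of $\dd^1$ with respect to $\kq_{n\hyper}\to\ckw_{n\hyper}$ together with the already-known $\dd^1(\ckw_{n\hyper})$ from Theorem~\ref{thm:ckwnhyper-diff}, which resolves the indeterminacy inherent in lifting operations through the relevant Bocksteins.

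I expect the main obstacle to be the purely combinatorial one: organizing this analysis over the residue of $j$ (equivalently $q$) modulo $4$, keeping track of which summands are ``original'' and which are ``extra'', and verifying the correction terms in each of the eight-or-so cases. Conceptually no input beyond naturality of the slice filtration and the motivic Cartan formula is required, but the bookkeeping — especially reconciling the $n$-odd correction terms with the mod-$4$ periodicity inherited from $\dd^1(\kq)$ — is where the work lies.
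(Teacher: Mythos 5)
Your route is essentially the paper's: smash the defining cofiber sequence with $\kq$, apply the (triangulated) slice functors, use the slices of $\kq$ from \cite{aro.kq} and the fact that $n\hyper$ acts on slices as multiplication by $2n$, and then determine the comparison map to $\ckw_{n\hyper}$ and the differential $\dd^1(\kq_{n\hyper})$ by playing off naturality of $\dd^1$ along $\kq\to\kq_{n\hyper}\to\Sigma^{1,0}\kq$ against the known differentials $\dd^1(\kq)$ and $\dd^1(\ckw_{n\hyper})$ (Theorem~\ref{thm:ckwnhyper-diff}); that comparison is exactly the paper's mechanism, and with it your outline goes through. The one caveat concerns your claimed ``origin'' of the $n$-parity dichotomy: attributing the $\Sq^1$-occurrences to the Bockstein structure on $h^{\ast,\ast}(\moore{n\hyper})$ (as in Theorem~\ref{thm:moore-forms-nomult}) fed through the Cartan formula is not a proof. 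All slices of $\kq_{n\hyper}$ split as the same wedge of copies of $\MZ/2$ (and one $\MZ/2n$) irrespective of the parity of $n$, so no two cells of $\s_q(\kq_{n\hyper})$ are intrinsically ``$\Sq^1$-linked''; the $\Sq^1$-components exist only in the map to $\ckw_{n\hyper}$ and in $\dd^1$, and they are well defined only relative to a choice of splittings, which the cohomology of $\moore{n\hyper}$ alone does not fix. Since you do also invoke the decisive argument -- differentials for $\kq_{n\hyper}$ compatible with $\dd^1(\kq)$, compared against $\dd^1(\ckw_{n\hyper})$ coming from the splitting $\ckw_{n\hyper}\simeq\ckw\vee\Sigma^{1,0}\ckw$ -- your proposal is correct in substance, but the Cartan-formula/Bockstein paragraph should be regarded as motivation rather than as the step that pins down the $\Sq^1$'s and the odd-$n$ correction terms.
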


\begin{proof}
  The description of the slices follows from the fact that the slice functors
  are triangulated, and \cite[Theorem 3.2]{aro.kq}. 
  The effect of the canonical
  map $\kq_{n\hyper}\to \ckw_{n\hyper}$ on slices is readily obtained,
  except for the occurrence of $\Sq^1$. The latter follows from the
  determination of the first slice differential for $\ckw_{n\hyper}=\ckw\vee
  \Sigma^{1,0}\ckw$, compared with possible first slice differentials for
  $\kq_{n\hyper}$ compatible with the first slice differential for $\kq$,
  as described in \cite[Theorem 3.5]{aro.kq}. Determining the first slice differential
  for $\kq_{n\hyper}$ is then essentially straightforward.
\end{proof}

Let $1<n\in \NN$, set $\kq_{n}:=\kq\smash \moore{n}$,
and $\ckw_{n}:=\ckw\smash \moore{n}$.
For comparison, consider the form of the first slice differential
for $\ckw_{2}$, as obtained in \cite[Theorem 4.3, Theorem 4.14]{kro.hermitian}.
Despite the abstract isomorphisms $\s_\ast\ckw_{2}\iso \s_\ast\ckw_{\hyper}$
and $\s_\ast \kq_{2}\iso \s_\ast\kq_{\hyper}$, 
the first slice differentials differ.
Note that if $n$ is odd, then all slices of $\ckw_{n}$ vanish,
although $\ckw_{n}$ itself does not over formally real fields. 

\begin{theorem}
  \label{thm:ckw2-diff}
  The restriction of the slice $\dd^{1}$-differential for $\ckw_{2}$
  to the summand $\Sigma^{q + j,q}\MZ/2$ of $\s_{q}(\ckw_{2})$ is given by 
  \begin{equation*}
    \dd^{1}(\ckw_{2})(q,j)
    = 
    \begin{cases}
      (\Sq^{3}\Sq^{1},0,\Sq^{2}) & j\equiv 0\bmod 4 \\
      (\Sq^{3}\Sq^{1},\Q_1,\Sq^{2},\rho+\tau\Sq^{1}) & j\equiv 1\bmod 4 \\
      (\Sq^{3}\Sq^{1},0,\Sq^{2}+\rho\Sq^{1},0,\tau) & j\equiv 2\bmod 4 \\
      (\Sq^{3}\Sq^{1},\Q_1,\Sq^{2}+\rho\Sq^{1},\tau\Sq^{1},\tau) & j\equiv 3\bmod 4. 
    \end{cases}
    \label{equation:KW2-diff1}
  \end{equation*}
  Here the $i$th component of $\dd^{1}(\ckw_{2})(q,j)$ is a map $\Sigma^{q+j,q}\MZ/2\to\Sigma^{q+j+i,q+1}\MZ/2$. 
\end{theorem}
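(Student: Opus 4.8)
The plan is to bootstrap from the first slice differential of $\ckw$, which is known by \cite{roendigs-oestvaer.hermitian} and already underlies Theorem~\ref{thm:ckwnhyper-diff}, and to pin down one extra ``connecting'' term. First I would record that, although the spectrum $\ckw_2$ does \emph{not} split -- on $\ckw$-modules one has $2=\hyper-\eta\rho=-\eta\rho$, since $\hyper$ acts as zero on $\ckw$, and $\eta\rho=\langle-1\rangle-1$ is nonzero on $\ckw$ (over $\RR$ it acts as multiplication by $-2$) -- its slices split. Applying the triangulated slice functors $\s_q$ to the cofiber sequence $\ckw\xrightarrow{2}\ckw\to\ckw_2\to\Sigma^{1,0}\ckw$ and using that $2$ acts as zero on the $\MZ/2$-module slices of $\ckw$ gives $\s_q\ckw_2\iso\s_q\ckw\directsum\Sigma^{1,0}\s_q\ckw$. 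I fix such a splitting in each slice, calling the summands of $\s_q\ckw$ the \emph{bottom-cell} summands and those of $\Sigma^{1,0}\s_q\ckw$ the \emph{top-cell} summands; by the structure of $\s_q\ckw$ recalled from \cite{roendigs-oestvaer.hermitian} the bottom-cell summands are those indexed by $j$ even and the top-cell summands by $j$ odd.

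Naturality of the first slice differential with respect to the bottom-cell map $\ckw\to\ckw_2$ then shows that the restriction of $\dd^1(\ckw_2)$ to the bottom-cell summands is $\dd^1(\ckw)$, landing again in bottom-cell summands, while naturality with respect to the top-cell map $\ckw_2\to\Sigma^{1,0}\ckw$ shows that its top-cell-to-top-cell component is $\Sigma^{1,0}\dd^1(\ckw)$. This already yields the cases $j\equiv 0,2\bmod 4$ and reproduces there the differential $\dd^1(\ckw_{\hyper})=\dd^1(\ckw)\directsum\Sigma^{1,0}\dd^1(\ckw)$ of Theorem~\ref{thm:ckwnhyper-diff}. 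The one component \emph{not} forced by naturality is the top-cell-to-bottom-cell ``connecting'' part $\delta$; it is nonzero precisely because $\ckw_2$ does not split, and it accounts for the extra operations in the cases $j\equiv 1,3\bmod 4$.

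To compute $\delta$ I would, in each residue class $j\bmod 4$: (i) enumerate the stable operations $\Sigma^{q+j,q}\MZ/2\to\Sigma^{q+j+i,q+1}\MZ/2$ that can occur, spanned in the relevant low degrees by $\Sq^1$, $\Sq^2$, $\Sq^3\Sq^1$, $\Q_1$ and multiplication by $\rho,\tau$; (ii) impose $\dd^1\circ\dd^1=0$ for $\ckw_2$, which couples $\delta$ in consecutive slices to the known diagonal $\dd^1(\ckw)$; (iii) impose compatibility with the $\ckw$-module -- hence multiplicative -- structure of the slice spectral sequence, with the map $\kq_2\to\ckw_2$ and the slices of $\kq$ from \cite{aro.kq}, and with complex and real realization; and (iv) discard the residual ambiguity of the chosen splitting by matching leading terms. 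The outcome is that $\delta$ contributes the Milnor operation $\Q_1$ together with $\rho+\tau\Sq^1$ (for $j\equiv 1$), respectively $\tau\Sq^1$ (for $j\equiv 3$), in the lowest component, exactly as in the statement; this computation is carried out cell by cell in \cite[Theorem 4.3, Theorem 4.14]{kro.hermitian}. The periodicity in $j\bmod 4$ descends from the $4$-fold periodicity of the slice tower of $\KW$: a Bott-type self-equivalence identifies $\s_q\ckw$ with a shift of $\s_{q+4}\ckw$ compatibly with $\dd^1$, and this survives smashing with $\moore{2}$, so only the four residues of $j$ need checking.

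The main obstacle is step (iii): forcing $\delta$ to carry precisely $\Q_1$, $\tau\Sq^1$ and $\rho$ and not some a priori admissible smaller or larger combination. This is the same secondary phenomenon that makes $\pi_2\sphere/2\iso\ZZ/4$ and causes $\moore{2}$ to fail to be multiplicative in low degrees (compare Theorem~\ref{thm:moore-forms-nomult} and Lemma~\ref{lem:nepsilon}): the relation $2=-\eta\rho$ on $\ckw$ is invisible on slices -- which is exactly why $\s_\ast\ckw_2\iso\s_\ast\ckw_{\hyper}$ -- yet it survives into the first $k$-invariant of the slice tower and makes $\dd^1(\ckw_2)$ differ from $\dd^1(\ckw_{\hyper})$. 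Once $\delta$ is identified, the remaining work -- collecting components, fixing signs, and matching against Theorem~\ref{thm:ckwnhyper-diff} -- is routine.
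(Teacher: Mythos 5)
Your proposal follows essentially the same route as the paper, which states this theorem without an independent proof and quotes it directly from \cite[Theorems 4.3 and 4.14]{kro.hermitian} -- exactly the reference to which you also delegate the decisive cell-by-cell identification of the top-to-bottom ``connecting'' component. The structural framing you supply is correct and consistent with the paper's surrounding discussion: the slices of $\ckw_{2}$ split because $2$ acts trivially on the $\MZ/2$-module slices, naturality along $\ckw\to\ckw_{2}\to\Sigma^{1,0}\ckw$ forces the bottom-to-bottom and top-to-top components to be $\dd^{1}(\ckw)$ and its suspension (recovering the $j\equiv 0,2\bmod 4$ cases and the split pattern of Theorem~\ref{thm:ckwnhyper-diff}), and the extra terms $\Q_1$, $\rho+\tau\Sq^{1}$, $\tau\Sq^{1}$ in odd $j$ are precisely the connecting part reflecting that $2=-\eta\rho\neq 0$ on $\ckw$, which is why $\dd^{1}(\ckw_{2})$ differs from $\dd^{1}(\ckw_{\hyper})$ despite the abstract isomorphism of slices.
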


\begin{theorem}
  \label{thm:diff-kq-mod2}
  The restriction of the slice $\dd^{1}$-differential for $\kq_{2}$
  to the summand $\Sigma^{q+j,q}\MZ/2$ of $\s_{q}(\kq_{2})$ is given by
  \begin{align*}
    \dd^{1}(\kq_{2})(q,j) 
    & =  
      \begin{cases}
        (\Sq^{3}\Sq^{1},0,\Sq^{2}) & q>j\equiv 0\bmod 4 \\
        (\Sq^{3}\Sq^{1},\Q_1,\Sq^{2},\rho+\tau\Sq^{1}) & q>j\equiv 1\bmod 4 \\
        (\Sq^{3}\Sq^{1},0,\Sq^{2}+\rho\Sq^{1},0,\tau) & q>j\equiv 2\bmod 4 \\
        (\Sq^{3}\Sq^{1},\Q_1,\Sq^{2}+\rho\Sq^{1},\tau\Sq^{1},\tau) & q>j\equiv 3\bmod 4, 
      \end{cases} \\
    \dd^{1}(\kq_{2})(q,q) & =  
                            \begin{cases}
                              (\Sq^{2}\Sq^{1},\Sq^{2}+\rho\Sq^{1}) & q\equiv 0\bmod 4 \\
                              (\Q_1,\Sq^{2},\rho+\tau\Sq^{1},0) & q\equiv 1\bmod 4 \\
                              (\Sq^{2}\Sq^{1},\Sq^{2}+\rho\Sq^{1},\tau\Sq^{1},\tau) & q\equiv 2\bmod 4 \\
                              (\Q_1,\Sq^{2}+\rho\Sq^{1},\tau\Sq^{1},\tau) & q\equiv 3\bmod 4. 
                            \end{cases} 
  \end{align*}
  Here the $i$th component of $\dd^{1}(\kq_{2})(q,j)$ is a map $\Sigma^{q+j,q}\MZ/2\to\Sigma^{q+j+i,q+1}\MZ/2$. 
\end{theorem}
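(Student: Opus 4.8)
The plan is to imitate the proof of Theorem~\ref{thm:slices-kqnh}, with the first slice differential for $\ckw_{2}$ from Theorem~\ref{thm:ckw2-diff} playing the role that the differential for $\ckw_{n\hyper}$ played there. First I would read off the slices of $\kq_{2}=\kq\smash\moore{2}$ from the homotopy cofiber sequence $\kq\xrightarrow{2}\kq\to\kq_{2}\to\Sigma^{1,0}\kq$, the triangulatedness of the slice functors $\s_{q}$, and the slices of $\kq$ from \cite[Theorem~3.2]{aro.kq}. Since multiplication by the integer $2$ acts as the reduction $\MZ\to\MZ/2$ on the integral summand $\Sigma^{4q,2q}\MZ$ of an even slice of $\kq$ and as the zero map on every $\MZ/2$-summand, this gives the decomposition of $\s_{q}\kq_{2}$ into summands $\Sigma^{q+j,q}\MZ/2$ indexed as in the statement. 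These slices coincide with those of $\kq_{\hyper}$; the first slice differentials nonetheless differ because $\hyper$ is the zero endomorphism of $\KW$, so that $\kq_{\hyper}$ has a split slice tower, whereas $2=\langle 1,1\rangle$ is nonzero in the Witt ring of a field in which $-1$ is not a square, so that the slice tower of $\kq_{2}$ carries a genuine connecting term.

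Next I would fix the shape of $\dd^{1}(\kq_{2})$. The maps $\kq\xrightarrow{c}\kq_{2}$ and $\kq_{2}\xrightarrow{d}\Sigma^{1,0}\kq$ induce morphisms of slice spectral sequences, so that on $E_{1}$-pages $\dd^{1}(\kq_{2})$ sits in a long exact sequence together with $\dd^{1}(\kq)$ and $\dd^{1}(\Sigma^{1,0}\kq)$. Since $\dd^{1}(\kq)$ is determined in \cite[Theorem~3.5]{aro.kq}, this pins down $\dd^{1}(\kq_{2})$ up to a single connecting component of Bockstein type, landing in the summands of $\s_{q+1}\kq_{2}$ inherited from $\s_{q+1}\kq$. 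In addition $\kq_{2}$ is a module over the ring spectrum $\kq$, so its slice spectral sequence is a module spectral sequence over that of $\kq$ and $\dd^{1}(\kq_{2})$ is a derivation over $\dd^{1}(\kq)$; thus it suffices to evaluate the connecting component on the generators of $\s_{\star}\kq_{2}$ over the slice ring $\s_{\star}\kq$, and the relation $\dd^{1}(\kq_{2})\circ\dd^{1}(\kq_{2})=0$ further constrains it.

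To determine the connecting component I would use the canonical map $\phi\colon\kq_{2}\to\ckw_{2}$. Because the cones of $2$ and of $\hyper$ on the slices of $\kq$, and likewise on those of $\ckw$, are induced by identical maps, the effect of $\phi$ on slices is exactly the one described in Theorem~\ref{thm:slices-kqnh} for $n=1$: the identity on even-$j$ summands, the map $(\Sq^{1},1)$ on odd-$j$ summands, and a $(\partial,\pr)$-type map on the summand descending from the integral slice. Naturality $\phi_{\ast}\circ\dd^{1}(\kq_{2})=\dd^{1}(\ckw_{2})\circ\phi_{\ast}$, combined with the formula for $\dd^{1}(\ckw_{2})$ in Theorem~\ref{thm:ckw2-diff}, then reads off the connecting component on the summands $\Sigma^{q+j,q}\MZ/2$ with $q>j$, where $\phi_{\ast}$ is a split monomorphism on the relevant source and target slices; this produces the first displayed formula.

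The main remaining work concerns the summand $\Sigma^{2q,q}\MZ/2$ descending from the integral slice $\Sigma^{4q,2q}\MZ$ of $\kq$ --- the case $j=q$ --- and the lowest values of $q$, where one must keep track of the integral Bockstein and the comparison with $\ckw_{2}$ no longer separates, for instance, $\Sq^{2}\Sq^{1}$ from $\Q_{1}=\Sq^{2}\Sq^{1}+\Sq^{1}\Sq^{2}$ or fixes the coefficients of the $\rho\Sq^{1}$, $\tau\Sq^{1}$, and $\rho$ terms in the top component. There I would combine the $\kq$-module structure and the relation $\dd^{1}(\kq_{2})\circ\dd^{1}(\kq_{2})=0$ with realization functors: complex realization sends $\kq_{2}$ to the topological spectrum $\mathrm{ko}/2$ and detects the $\Q_{1}$-type ambiguities, while real realization --- which does not annihilate $\rho$ --- fixes the remaining $\rho$-coefficients; a comparison with $\KGL\smash\moore{2}$, whose first slice differential is the mod-$2$ reduction of the known one for $\KGL$, serves as a further cross-check. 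Since the slice differential of $\kq_{2}$ is defined over the prime field and pulled back to any base, these finitely many verifications suffice, and a final check of compatibility with $\dd^{1}(\kq)$, with $\dd^{1}(\ckw_{2})$, and with the square-zero relation concludes the argument.
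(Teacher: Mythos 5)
A preliminary remark: the paper itself contains no proof of Theorem~\ref{thm:diff-kq-mod2}; like Theorem~\ref{thm:ckw2-diff}, it is recalled from \cite{kro.hermitian} (cf.\ the surrounding citations of Theorems 4.3, 4.14, 4.24, 4.36 there), so your reconstruction has to stand on its own. Its general scheme --- slices of $\kq_{2}$ via triangulatedness of $\s_{q}$, naturality along $\kq\to\kq_{2}\to\Sigma^{1,0}\kq$, the $\kq$-module structure, the relation $\dd^{1}\circ\dd^{1}=0$, and comparison with $\ckw_{2}$ via Theorem~\ref{thm:ckw2-diff} --- is sensible and parallels the proofs of Theorems~\ref{thm:slices-kqnh} and~\ref{thm:slice-diff-moorenh}.

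However, the step you lean on hardest contains a genuine error. You assert that $\phi\colon\kq_{2}\to\ckw_{2}$ induces on slices exactly the map recorded in Theorem~\ref{thm:slices-kqnh} for $n=1$, on the grounds that the cones of $2$ and of $\hyper$ on slices are cones of identical maps. A commuting square does not determine the induced map on cofibers; the difference $2-\hyper=-\eta[-1]$ is invisible on $\s_{\ast}\kq$ and $\s_{\ast}\ckw$ themselves and lives precisely in the cone data, which is exactly the phenomenon the paper flags when it stresses that the first slice differentials differ despite $\s_{\ast}\kq_{2}\iso\s_{\ast}\kq_{\hyper}$ and $\s_{\ast}\ckw_{2}\iso\s_{\ast}\ckw_{\hyper}$. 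Concretely, the claimed $(\Sq^{1},1)$-shape of $\s(\phi)$ on odd-position summands is incompatible with the very formulas you want to combine: for a summand in position $j\equiv 1\bmod 4$ with $j<q$, the composite $\dd^{1}(\ckw_{2})\circ\s(\phi)$ has components $\Sq^{2}\Sq^{1}+\Q_{1}=\Sq^{1}\Sq^{2}$ in bidegree $(3,1)$ and $\tau\Sq^{1}+(\rho+\tau\Sq^{1})=\rho$ in bidegree $(1,1)$, whereas $\s(\phi)\circ\dd^{1}(\kq_{2})$, with the stated $\dd^{1}(\kq_{2})(q,j)=(\Sq^{3}\Sq^{1},\Q_{1},\Sq^{2},\rho+\tau\Sq^{1})$, has components $\Q_{1}+\Sq^{1}\Sq^{2}=\Sq^{2}\Sq^{1}$ and $\rho+\tau\Sq^{1}$ in those bidegrees; so running your naturality argument with that comparison map would output formulas contradicting the theorem. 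In fact the stated differentials for $\kq_{2}$ and $\ckw_{2}$ agree verbatim on all summands with $j<q$, so they are intertwined by a comparison map with no $\Sq^{1}$-component; the $(\Sq^{1},1)$-shape in Theorem~\ref{thm:slices-kqnh} is forced by the splitting of $\dd^{1}(\ckw_{n\hyper})$, a feature special to $\hyper$ (which vanishes on $\KW$) and not shared by $2$. Determining $\s(\phi)$ in coordinates compatible with Theorem~\ref{thm:ckw2-diff} is therefore a computation of essentially the same nature and difficulty as the theorem itself; note that in the paper's own related argument, the proof of Theorem~\ref{thm:slices-kqnh}, the inference runs the opposite way, from known differentials to the slice-level map. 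The remaining devices you invoke for the $j=q$ column and the Bockstein-type components (realization functors, comparison with $\KGL\smash\moore{2}$), as well as the ambiguity in your first step of maps factoring through the integral Bockstein on the summand inherited from $\Sigma^{4q,2q}\MZ$, are plausible but only sketched; pinning down the $\rho$- and $\tau\Sq^{1}$-coefficients there is exactly the content supplied by the computation in \cite{kro.hermitian}.
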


Note that \cite[Theorems 4.24, 4.36]{kro.hermitian} contain information about
first slice differentials for
$\kq_{2^r}$ and $\ckw_{2^r}$, which could be used to analyze the first
slice differentials for $\moore{2^r}$.

\begin{theorem}\label{thm:slices-moorenh}
  Let $n>0$, and let $g:=\gcd(2n,12)$. 
  The slices of $\moore{n\hyper}$ are given as follows, up
  to summands of simplicial suspension higher than 
  $q+2$ for $q\geq 4$.
  \begin{align*}
    \s_0 (\moore{n\hyper}) & = \s_0(\unit)/2n \\
    \s_1 (\moore{n\hyper}) & = \s_1(\unit)\vee \Sigma^{1,0}\s_1\unit \\
    \s_2 (\moore{n\hyper}) & = \Sigma^{2,2}\MZ/2\{\alpha_1^2\}\vee \Sigma^{3,2}\MZ/2\{\overline{\alpha_1^2}\} \vee \Sigma^{3,2}\MZ/g\{\alpha_2\}\vee \Sigma^{4,2}\MZ/g\{\overline{\alpha_2}\}  \\
    \s_3 (\moore{n\hyper}) & = \s_3(\unit)\vee \Sigma^{1,0}\s_3\unit \\
    \s_4 (\moore{n\hyper}) & = \Sigma^{4,4}\MZ/2\{\alpha_1^4\}\vee \Sigma^{5,4}\MZ/2\{\overline{\alpha_1^4}\} \vee \Sigma^{6,4}\MZ/2\{\alpha_1\alpha_3\} \vee \Sigma^{6,4}\MZ/2\{\nu^2\} \vee \dotsm   \\
    \s_{q} (\moore{n\hyper}) & = \Sigma^{q,q}\MZ/2\{\alpha_1^q\}\vee \Sigma^{q+1,q}\MZ/2\{\overline{\alpha_1^q}\}\vee \Sigma^{q+2,q}\MZ/2\{\alpha_1^{q-3}\alpha_3\}\vee \dotsm 
  \end{align*}
  The unit map $\unit\to \kq$ induces a map $\moore{nh}\to \kq_{nh}$,
  which induces the identity map on the slice summands 
  $\MZ/2n,\Sigma^{q,q}\MZ/2\{\alpha_1^q\}$, 
  $\Sigma^{q+1,q}\MZ/2\{\overline{\alpha_1^q}\}$,
  $\Sigma^{q+2,q}\MZ/2\{\alpha_1^{q-3}\alpha_3\}$,
  and $\Sigma^{q+3,q}\MZ/2\{\overline{\alpha_1^{q-3}\alpha_3}\}$, and the map
  \[ \Sigma^{3,2}\MZ/g\vee \Sigma^{4,2}\MZ/g \xrightarrow{\begin{pmatrix} \partial^g_{2n} & \inc^{g}_{2n} \end{pmatrix}} \Sigma^{4,2}\MZ/2n\]
  on summands of the two-slices. 
\end{theorem}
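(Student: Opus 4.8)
The plan is to apply the triangulated slice functors $\s_q$ to the homotopy cofiber sequence $\unit\xrightarrow{n\hyper}\unit\xrightarrow{c}\moore{n\hyper}\xrightarrow{d}\Sigma^{1,0}\unit$ defining $\moore{n\hyper}$ and to make the resulting cofiber sequences $\s_q\unit\xrightarrow{\s_q(n\hyper)}\s_q\unit\to\s_q\moore{n\hyper}\to\Sigma^{1,0}\s_q\unit$ explicit. Two inputs are needed. The first is the structure of the slices $\s_q\unit$ of the motivic sphere spectrum in the relevant range, imported from \cite{rso.oneline}, \cite{roendigs.etainv} and \cite{rso.twoline}: up to wedge summands of simplicial suspension exceeding $q+2$, one has $\s_0\unit=\MZ$, $\s_1\unit=\Sigma^{1,1}\MZ/2\{\alpha_1\}$, $\s_2\unit=\Sigma^{2,2}\MZ/2\{\alpha_1^2\}\vee\Sigma^{3,2}\MZ/12\{\alpha_2\}$, $\s_3\unit=\Sigma^{3,3}\MZ/2\{\alpha_1^3\}\vee\Sigma^{5,3}\MZ/2\{\alpha_3\}$, $\s_4\unit=\Sigma^{4,4}\MZ/2\{\alpha_1^4\}\vee\Sigma^{6,4}\MZ/2\{\alpha_1\alpha_3\}\vee\Sigma^{6,4}\MZ/2\{\nu^2\}$, and $\s_q\unit=\Sigma^{q,q}\MZ/2\{\alpha_1^q\}\vee\Sigma^{q+2,q}\MZ/2\{\alpha_1^{q-3}\alpha_3\}$ for $q\geq5$. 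The second is that $n\hyper$ induces multiplication by the integer $2n$ on every slice: $\s_q\unit$ is a module over the graded ring spectrum with zeroth term $\s_0\unit=\MZ$, and the unit $\unit\to\s_0\unit$ carries $\pi_{0,0}\unit=\GW$ onto $\pi_{0,0}\MZ=\ZZ$ by the rank $\dim$, so that $\s_q(n\hyper)$ is multiplication by $\dim(n\hyper)=2n$; the same holds for $\kq$, and with Theorem~\ref{thm:slices-kqnh} one gets $\s_q\kq_{n\hyper}=\cone(2n\colon\s_q\kq\to\s_q\kq)$.

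Granting this, $\s_q\moore{n\hyper}=\cone(2n\colon\s_q\unit\to\s_q\unit)$, and the remaining task is to compute cones of multiplication by $2n$ on Tate twists and simplicial suspensions of constant-coefficient motivic Eilenberg--MacLane spectra. This computation takes place in the full subcategory of $\MZ$-modules generated by $\MZ$ under simplicial suspension and cofibers, which identifies with a full subcategory of the derived category $D(\ZZ)$, where every bounded complex is formal. Hence $\cone(m\colon\MZ\to\MZ)=\MZ/m$, while for $\ell>0$ one has $\cone(m\colon\Sigma^{a,b}\MZ/\ell\to\Sigma^{a,b}\MZ/\ell)\simeq\Sigma^{a,b}\MZ/{\gcd(m,\ell)}\vee\Sigma^{a+1,b}\MZ/{\gcd(m,\ell)}$, the two summands being the cokernel and the shifted kernel of multiplication by $m$ on $\ZZ/\ell$. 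Applying this with $m=2n$, $\ell\in\{2,12\}$ and $g=\gcd(2n,12)$, and using that multiplication by $2n$ vanishes on each mod-$2$ summand, reproduces the stated forms of $\s_q\moore{n\hyper}$; here $\overline{(-)}$ denotes the shifted-kernel (``top cell'') summand attached to a summand $(-)$ of $\s_q\unit$, and the truncation at simplicial suspension $q+2$ is inherited from $\s_q\unit$. The cyclic group $\MZ/12$ -- and not $\MZ/24$ -- occurring in $\s_2\unit$ is correct: the order-two element $12\nu=\eta^2\eta_{\Top}$ of $\pi_{3,2}\unit$ from Theorem~\ref{thm:pi1f1unit} is detected in $\s_3\unit$ by $\tau\alpha_1^3$ and enters $\pi_{3,2}\unit$ only through a slice spectral sequence extension.

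For the statement about the map $\moore{n\hyper}=\unit\smash\moore{n\hyper}\to\kq\smash\moore{n\hyper}=\kq_{n\hyper}$ coming from $\unitkq\colon\unit\to\kq$, I would apply $\s_q$ and compare the two cone presentations through naturality of $\cone(2n,-)$ applied to the slice map $\s_q\unitkq\colon\s_q\unit\to\s_q\kq$. By \cite{aro.kq} and \cite{rso.oneline} this slice map is an isomorphism on $\s_0$ and carries $\Sigma^{q,q}\MZ/2\{\alpha_1^q\}$ and $\Sigma^{q+2,q}\MZ/2\{\alpha_1^{q-3}\alpha_3\}$ identically to the corresponding summands of $\s_q\kq$, while on the two-slice it sends $\Sigma^{3,2}\MZ/12\{\alpha_2\}$ to the integral summand $\Sigma^{4,2}\MZ$ of $\s_2\kq$ by the Bockstein $\MZ/12\to\Sigma^{1,0}\MZ$ associated with $0\to\ZZ\xrightarrow{12}\ZZ\to\ZZ/12\to0$ -- in accordance with $\unitkq(\nu)=0$. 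Passing to $\cone(2n,-)$ and using the standard naturality of Bocksteins under change of coefficients, the cokernel summand $\Sigma^{3,2}\MZ/g$ maps to $\Sigma^{4,2}\MZ/2n=\cone(2n\colon\Sigma^{4,2}\MZ\to\Sigma^{4,2}\MZ)$ by $\partial^{g}_{2n}$ and the shifted-kernel summand $\Sigma^{4,2}\MZ/g$ maps to it by $\inc^{g}_{2n}$ (defined since $g\mid2n$), while on the mod-$2$ summands and their top-cell copies the induced map remains the identity.

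I expect the main obstacle to lie in the precision demanded of the two imported facts rather than in the assembly. The description of $\s_q\unit$ with its exact simplicial-suspension cutoff rests squarely on the slice and Adams--Novikov computations of \cite{rso.oneline}, \cite{roendigs.etainv} and \cite{rso.twoline}; and the two-slice comparison with $\kq$ is the subtle point, since $\nu$ maps to zero in $\kq$ yet the two-slice map is the nonzero integral Bockstein onto the $\MZ$-summand of $\s_2\kq$, so that identifying the induced maps on $\s_2\moore{n\hyper}$ precisely as $\partial^{g}_{2n}$ and $\inc^{g}_{2n}$ requires careful control of $\cone(2n,-)$ on a map raising simplicial degree. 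The rest is bookkeeping with cones in $D(\ZZ)$.
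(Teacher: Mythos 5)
Your proposal is correct and follows essentially the same route as the paper: the paper's (very terse) proof likewise applies the triangulated slice functors to the defining cofiber sequence, uses that $\hyper$ acts as multiplication by $2$ on slices (so $n\hyper$ acts by $2n$), imports the slices of $\unit$ from Levine's comparison theorem resp.\ the cited computation, and reads off the comparison with $\kq_{n\hyper}$ from the known behaviour of the unit map $\unit\to\kq$ on slices. Your additional bookkeeping -- the $\MZ$-module/rank argument for the action on slices, the splitting of $\cone(2n)$ on $\MZ/\ell$ into cokernel and shifted kernel via the embedded copy of the derived category of abelian groups, and the Bockstein naturality giving $(\partial^g_{2n},\inc^g_{2n})$ on the two-slice (whose degree-raising component is indeed only pinned down once a splitting of the cone is fixed, as you note) -- simply fills in details the paper delegates to its references.
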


\begin{proof}
  The form of the slices follows from 
  \cite[Section 8]{levine.comparison} or \cite[Theorem 2.12]{rso.oneline}, 
  since $\hyper$ induces multiplication
  by $2$ on slices, and the slice functors are triangulated.
  The map $\moore{n\hyper}\to\kq_{n\hyper}$ 
  is determined by the unit map $\unit\to \kq$, whose behaviour on
  slices can be read off from \cite[Lemmas 2.28, 2.29]{rso.oneline}.
\end{proof}

\begin{theorem}\label{thm:slice-diff-moorenh}
  The first slice differential has the following
  form on the given summands:
  \begin{align*}
    \dd^1(\moore{n\hyper})(\alpha_1^0) & = (\Sq^2\partial^{2n}_{2},\Sq^2\pr^{2n}_{2})  \\
  \dd^1(\moore{n\hyper})(\alpha_1) & = 
    \begin{cases} 
      (\partial^{2}_{g}\Sq^2\Sq^1,\inc^2_{g}\Sq^2\Sq^1,0,\Sq^2) & n\equiv 0(4) \\
      (0,\inc^2_{g}\Sq^2\Sq^1,0,\Sq^2) & n\equiv 2(4) \\
      (\partial^{2}_{g}\Sq^2\Sq^1,0,0,\Sq^2) & n\equiv 1(2)
    \end{cases} \\
    \dd^1(\moore{n\hyper})(\alpha_1^{q}) & = (\Sq^3\Sq^1,0,\Sq^2) \quad q\geq 2\\
    \dd^1(\moore{n\hyper})(\overline{\alpha_1}) & = 
    \begin{cases} 
      (\inc^2_{g}\Sq^2\Sq^1,0,\Sq^2) & n\equiv 0(2) \\
      (\inc^{2}_{g}\Sq^2\Sq^1,\inc^{2}_{g}\Sq^2,\Sq^2,\tau\Sq^1) & n\equiv 1(2)
    \end{cases} \\
    \dd^1(\moore{n\hyper})(\overline{\alpha^{q}_1}) & = 
    \begin{cases} 
      (\Sq^3\Sq^1,0,\Sq^2) & n\equiv 0(2) \\
      (\Sq^3\Sq^1,\Q_1,\Sq^2,\tau\Sq^1) & n\equiv 1(2)
    \end{cases} \\
    \dd^1(\moore{n\hyper})(\alpha_{2}) & =  
      (0,\Sq^2\partial^g_2,0,\tau\partial^g_2)  \\
    \dd^1(\moore{n\hyper})(\overline{\alpha_{2}}) & =  
    \begin{cases} 
      (\Sq^2\partial^g_2,0,\tau\partial^g_2,0) & n\equiv 0(4) \\
      (\Sq^2\partial^g_2,\Sq^2\pr^g_2,\tau\partial^g_2,\tau\pr^g_2) & n\not\equiv 0(4)
    \end{cases} \\
    \dd^1(\moore{n\hyper})({\alpha^{q-3}_1\alpha_{3}}) & =  
      (\Sq^3\Sq^1,0,\Sq^2+\rho\Sq^1,0,\tau) \quad q\geq 3 \\
    \dd^1(\moore{n\hyper})(\overline{\alpha^{q-3}_1\alpha_{3}}) & = 
    \begin{cases} 
      (\Sq^3\Sq^1,0,\Sq^2+\rho\Sq^1,0,\tau) & n\equiv 0(2) \\
      (\Sq^3\Sq^1,\Q_1,\Sq^2+\rho\Sq^1,\tau\Sq^1+\rho,\tau) & n\equiv 1(2)
    \end{cases} 
  \end{align*}
\end{theorem}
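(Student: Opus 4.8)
The plan is to deduce the slices and the first slice differential of $\moore{n\hyper}$ from those of the motivic sphere spectrum $\unit$ (computed in the relevant range in \cite{rso.oneline}), from Theorem~\ref{thm:slices-kqnh} for $\kq_{n\hyper}$, and from Theorem~\ref{thm:ckwnhyper-diff} for $\ckw_{n\hyper}$, using only naturality and multiplicativity of the slice spectral sequence along the homotopy cofiber sequence $\unit\xrightarrow{n\hyper}\unit\xrightarrow{c}\moore{n\hyper}\xrightarrow{d}\Sigma^{1,0}\unit$ and along the unit map $\unit\to\kq$.

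First I would apply the triangulated slice functors $\s_q$ to this cofiber sequence. Since $n\hyper$ acts on every slice as multiplication by $\dim(n\hyper)=2n$, this yields homotopy cofiber sequences $\s_q\unit\xrightarrow{2n}\s_q\unit\xrightarrow{\s_q(c)}\s_q\moore{n\hyper}\xrightarrow{\s_q(d)}\Sigma^{1,0}\s_q\unit$, hence the summand decompositions of Theorem~\ref{thm:slices-moorenh}: the $\MZ/2$-summands of $\s_q\unit$ (those carrying $\alpha_1^q$, $\overline{\alpha_1^q}$, $\alpha_1^{q-3}\alpha_3$, $\overline{\alpha_1^{q-3}\alpha_3}$, and $\alpha_1^2$, $\overline{\alpha_1^2}$ in slice degree $2$) split off, the zero-slice $\s_0\unit=\MZ$ becomes $\MZ/2n$, and the $\MZ/12$-summand $\{\alpha_2\}$ of $\s_2\unit$ produces summands $\{\alpha_2\}$, $\{\overline{\alpha_2}\}$ with coefficients $\MZ/g$, $g=\gcd(2n,12)$, the maps $\s_2(c)$ and $\s_2(d)$ restricting to the reduction $\pr^{12}_g$ and the inclusion $\inc^g_{12}$ up to the simplicial shift. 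Thus every generator of the target list is either $\s_\bullet(c)$ of a slice class of $\unit$ or maps onto one under $\s_\bullet(d)$, with the single exception of $\alpha_1^0$, the mod-$2n$ reduction of the unit class $1\in\s_0\unit$.

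Naturality of $\dd^1$ along $c$ and $d$ then gives $\dd^1(\moore{n\hyper})(x)\circ\s_q(c)=\Sigma^{1,0}\s_{q+1}(c)\circ\dd^1(\unit)(\bar x)$ for a bottom-cell class $x=\s_q(c)(\bar x)$, and symmetrically for the barred classes; by the long exact sequence of the cofiber of $n\hyper$ this fixes $\dd^1(\moore{n\hyper})(x)$ only up to composites with the connecting map, i.e.\ up to Bockstein-valued terms involving $\partial^{2n}_2$ and $\partial^g_2$. The ``visible'' part is the mod-$2n$, respectively mod-$g$, reduction of the known differential $\dd^1(\unit)(\bar x)$, which supplies all the entries assembled from $\Sq^2$, $\Sq^3\Sq^1$, $\Sq^2+\rho\Sq^1$, $\tau$, $\Q_1$, $\tau\Sq^1$, $\rho$ and from $\pr^{2n}_2$, $\pr^g_2$, $\inc^2_g$; and since the slice spectral sequence is multiplicative over $\s_0\unit=\MZ$, so that $\dd^1(\unit)(\alpha_1^q)$ is computed from $\dd^1(\unit)(\alpha_1)$ and $\dd^1(\unit)(\alpha_1^{q-1})$ by the Leibniz rule and stabilises past the low-degree anomalies, one gets the uniform formulas for $\alpha_1^q$ with $q\geq 2$ and for $\alpha_1^{q-3}\alpha_3$ with $q\geq 3$ (and their barred analogues).

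It remains to fix the residual Bockstein-valued components, and with them the whole dependence on $n\bmod 4$. For this I would push forward along $\moore{n\hyper}\to\kq_{n\hyper}$, which by Theorem~\ref{thm:slices-moorenh} is the identity on the summands $\MZ/2n$, $\{\alpha_1^q\}$, $\{\overline{\alpha_1^q}\}$, $\{\alpha_1^{q-3}\alpha_3\}$, $\{\overline{\alpha_1^{q-3}\alpha_3}\}$ and equals $(\partial^g_{2n},\inc^g_{2n})$ on the $\{\alpha_2\}$, $\{\overline{\alpha_2}\}$ part of the two-slice. As these are precisely the summands that occur in the asserted formulas and $\dd^1(\kq_{n\hyper})$ is given by Theorem~\ref{thm:slices-kqnh}, naturality of $\dd^1$ forces the remaining components, after translating between $\ZZ/12$-, $\ZZ/g$- and $\ZZ/2n$-coefficients by the change-of-coefficient sequences $0\to\ZZ/2\to\ZZ/g\to\ZZ/(g/2)\to 0$ and $0\to\ZZ/g\to\ZZ/2n\to\ZZ/(2n/g)\to 0$ and their compatibility with the Steenrod operations. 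For the few components invisible to $\kq_{n\hyper}$ (certain summands of simplicial suspension $q+2$) one invokes in addition the comparison $\kq_{n\hyper}\to\ckw_{n\hyper}$ with the differential of Theorem~\ref{thm:ckwnhyper-diff}, and, for anything still undetermined, a finite check that only one cohomology operation is compatible with all of $\dd^1(\unit)$, $\dd^1(\kq_{n\hyper})$, $\dd^1(\ckw_{n\hyper})$, the change-of-coefficient sequences and the derivation property --- an argument in the spirit of \cite[Theorems 3.2, 3.5]{aro.kq}. The main obstacle is exactly this last step: the off-diagonal and Bockstein-valued components are not forced by the sphere spectrum, and the map to $\kq_{n\hyper}$ is not injective on every relevant summand, so one is left with a bookkeeping of admissible operations that has to be carried out case by case --- for $n\equiv 0\bmod 4$, $n\equiv 2\bmod 4$, and $n$ odd, refined by $q\bmod 4$ within the tables. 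Finiteness of this case division is what makes the explicit table possible, and the restriction to summands of simplicial suspension at most $q+2$ for $q\geq 4$ is what keeps the number of cases bounded.
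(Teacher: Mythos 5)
Your overall strategy --- reading off the slices from the triangulated slice functors, transporting the ``visible'' part of the differential from $\dd^1(\unit)$ by naturality along $c$ and $d$, and then pinning down components by pushing forward to $\kq_{n\hyper}$ via Theorem~\ref{thm:slices-kqnh} --- is essentially the paper's route for the differentials out of $\s_0$ and $\s_2$ and for the summands listed with $q\geq 2$. The genuine gap is in your final step, and you flag it yourself: the Bockstein-valued and off-diagonal components, which are exactly the entries carrying the content of the theorem (the case distinctions between $n\equiv 0\bmod 4$, $n\equiv 2\bmod 4$ and $n$ odd in $\dd^1(\moore{n\hyper})(\alpha_1)$ and $\dd^1(\moore{n\hyper})(\overline{\alpha_2})$), are not forced by $\dd^1(\unit)$, nor by $\dd^1(\kq_{n\hyper})$ on summands where the comparison map fails to be injective. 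Note in particular that $n\equiv 0$ and $n\equiv 2\bmod 4$ can give the same $g$ (e.g.\ $n=2$ and $n=4$), and the differential for $\kq_{n\hyper}$ in Theorem~\ref{thm:slices-kqnh} depends only on the parity of $n$, so no amount of change-of-coefficient bookkeeping or comparison with $\kq_{n\hyper}$ can separate these cases. Your proposed remedies do not close this: the comparison with $\ckw_{n\hyper}$ is vacuous here, since the map $\moore{n\hyper}\to\ckw_{n\hyper}$ factors through $\kq_{n\hyper}$ and hence Theorem~\ref{thm:ckwnhyper-diff} adds nothing beyond Theorem~\ref{thm:slices-kqnh}; and the ``finite check that only one cohomology operation is compatible'' is asserted rather than exhibited, with the case-by-case analysis explicitly deferred.

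The ingredient the paper uses for precisely these remaining identities, and which is absent from your proposal, is the relation $\dd^2=0$: the composite of consecutive first slice differentials vanishes, and evaluating the composites of the already-determined differentials out of $\s_0$ and $\s_2$ (which involve $\partial^{2n}_{2}$, $\pr^{2n}_{2}$, $\partial^{2}_{2n}$, $\inc^{2}_{2n}$) with the unknown components, using the Adem relations and comparison with $\dd^1(\unit)$, forces the stated operations; it is the behaviour of these composites of Bocksteins, inclusions and projections that genuinely depends on $n\bmod 4$. Without this constraint (or an equivalent one made explicit and shown sufficient), your argument determines the table only up to exactly the terms in which its interest lies, so as written the proposal is a correct framing with the decisive step missing.
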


\begin{proof}
  Based on the form of the unit map, most parts 
  of the first slice differential are determined
  by Theorem~\ref{thm:slices-kqnh}. More precisely,
  the differentials $\s_{0}\moore{n\hyper}\to \Sigma^{1,0}\s_{1}\moore{n\hyper}$,
  $\s_{2}\moore{n\hyper}\to \Sigma^{1,0}\s_{3}\moore{n\hyper}$
  and the listed behaviour for $q\geq 2$ is determined by 
  Theorem~\ref{thm:slices-kqnh}. The remaining identities
  follow from Adem relations and the property $\dd^2=0$, as
  well as comparison with $\dd^1(\unit)$.
\end{proof}

Theorem~\ref{thm:slice-diff-moorenh} may be used for computations
of $\pi_{1+\bideg}\moore{n\hyper}$ and $\pi_{2+\bideg}\moore{n\hyper}$.
Compared with corresponding slice spectral
sequence computations for $\unit$, the absence of integral
motivic cohomology groups may be viewed as an advantage. 
Concrete presentations
for $H^{\star-k,\star}$ as $\KMW$-modules seem to be known only in
very few cases, contrary to the $\KMW$-module 
$h^{\star-k,\star}\iso \KMW_{\star -k}/(\eta,2)$.
\providecommand{\bysame}{\leavevmode\hbox to3em{\hrulefill}\thinspace}

\providecommand{\MR}{\relax\ifhmode\unskip\space\fi MR }
\providecommand{\MRhref}[2]{%
  \href{http://www.ams.org/mathscinet-getitem?mr=#1}{#2}
}
\providecommand{\href}[2]{#2}

\end{document}